\documentclass[11pt]{amsart}

\usepackage{amscd}
\usepackage{amsmath, amssymb}
\usepackage{amsfonts}
\newcommand{\de}{\partial}
\newcommand{\db}{\overline{\partial}}

\newcommand{\ddbar}{\sqrt{-1} \partial \overline{\partial}}
\newcommand{\Ric}{\mathrm{Ric}}
\newcommand{\ov}[1]{\overline{#1}}
\newcommand{\mn}{\sqrt{-1}}

\newcommand{\tr}[2]{\textrm{tr}_{#1}{#2}}
\newcommand{\ti}[1]{\tilde{#1}}
\newcommand{\vp}{\varphi}

\newcommand{\ve}{\varepsilon}

\renewcommand{\leq}{\leqslant}
\renewcommand{\geq}{\geqslant}
\renewcommand{\le}{\leqslant}
\renewcommand{\ge}{\geqslant}

\newcommand{\be}{\begin{equation}}
\newcommand{\ee}{\end{equation}}

\begin{document}
\newcounter{remark}
\newcounter{theor}
\setcounter{remark}{0}
\setcounter{theor}{1}
\newtheorem{claim}{Claim}
\newtheorem{theorem}{Theorem}[section]
\newtheorem{lemma}[theorem]{Lemma}
\newtheorem{corollary}[theorem]{Corollary}
\newtheorem{proposition}[theorem]{Proposition}
\newtheorem{question}{question}[section]
\newtheorem{defn}{Definition}[theor]
\numberwithin{equation}{section}

\title[The Monge-Amp\`ere equation]{The Monge-Amp\`ere equation for non-integrable almost complex structures}
\author[J. Chu]{Jianchun Chu}
\address{School of Mathematical Sciences, Peking University, Yiheyuan Road 5, Beijing, P.R.China, 100871}
\author[V. Tosatti]{Valentino Tosatti}
\address{Department of Mathematics, Northwestern University, 2033 Sheridan Road, Evanston, IL 60208}
\author[B. Weinkove]{Ben Weinkove}
\address{Department of Mathematics, Northwestern University, 2033 Sheridan Road, Evanston, IL 60208}

\begin{abstract}
We show existence and uniqueness of solutions to the Monge-Amp\`ere equation on compact almost complex manifolds with non-integrable almost complex structure.
\end{abstract}
\maketitle

\section{Introduction}

Yau's Theorem for compact K\"ahler manifolds  \cite{Y1} states that one can prescribe the volume form of a K\"ahler metric within a given K\"ahler class.  This result, proved forty years ago, occupies a central place in the theory of K\"ahler manifolds, with wide-ranging applications in geometry and mathematical physics.

More precisely, Yau's Theorem is as follows.  Let $(M^{2n}, \omega, J)$ be a compact K\"ahler manifold of complex dimension $n$, where $\omega$ denotes the K\"ahler form and $J$ the complex structure.  Then given a smooth volume form $e^F \omega^n$ on $M$ there exists a unique smooth function $\varphi$ satisfying
\begin{equation} \label{mayau}
\begin{split}
(\omega + \sqrt{-1} \partial \bar \partial \varphi )^n = {} & e^{F} \omega^n, \\ \quad \omega + \sqrt{-1} \partial \bar \partial \varphi > 0, \quad {} &\sup_M \varphi=0.
\end{split}
\end{equation}
as long as $F$ satisfies the necessary normalization condition that $\int_M e^F \omega^n = \int_M \omega^n$.  The equation (\ref{mayau}) is known as the complex Monge-Amp\`ere equation for K\"ahler manifolds.

There has been great interest in extending Yau's Theorem to non-K\"ahler settings.  Recall that  a K\"ahler metric is a positive definite real $(1,1)$ form $\omega$, namely a \emph{Hermitian metric}, which satisfies
$$d \omega=0.$$
One extension of Yau's Theorem, initiated by Cherrier \cite{Ch} in the 1980s, is to remove this closedness condition.  This was carried out in full generality in \cite{TW1} where it was shown that (\ref{mayau}) has a unique solution for $\omega$ Hermitian, up to adding a (unique) constant to $F$ (see also \cite{TW0, GL}, as well as \cite{Bl2, Chu1, DK, KN,KN2,N,PSS,S, TW2,Zh,ZZ} for later developments).  A different recent extension on complex manifolds \cite{STW}, confirming a conjecture of Gauduchon \cite{Ga}, is that one can prescribe the volume form of a
 \emph{Gauduchon metric} (satisfying $\partial \bar \partial (\omega^{n-1})=0$).  In this case, (\ref{mayau}) is replaced by a Monge-Amp\`ere type equation for $(n-1)$-plurisubharmonic functions \cite{HL0, FWW1, FWW2, Po, TW4, TW5}.  Within this circle of ideas remain some open conjectures, such as a version of Yau's Theorem for balanced metrics (satisfying $d (\omega^{n-1})=0$) \cite{FWW1, TW4, TW5}.  This is related to the some questions about the Strominger system of mathematical physics, see e.g. \cite{FY, LYa, PPZ} and the references therein.

A different type of extension of Yau's Theorem is to the case when $J$ is a \emph{non-integrable} almost complex structure, and this is subject of the current paper.    Around twenty years ago, Gromov posed the following problem to P. Delan\"oe  \cite{De}: let $(M^{2n},\omega)$ be a compact symplectic manifold, $J$ an almost complex structure compatible with $\omega$ and $F$ a smooth function on $M$ with $\int_M e^F\omega^n=\int_M\omega^n$. Can one find a smooth function $\vp$ on $M$ such that $\omega+d(Jd\vp)$ is a symplectic form taming $J$ and satisfying
\begin{equation}\label{grom}
(\omega+d(Jd\vp))^n=e^F\omega^n?
\end{equation}
However, Delan\"oe \cite{De} showed that when $n=2$ the answer to this question is negative, and this was later extended to all dimensions by Wang-Zhu \cite{WZ}. The key ingredient of their results is the construction of a smooth function $\vp_0$ such that $\omega+d(Jd\vp_0)$ is on the boundary of the set of taming symplectic forms (so its $(1,1)$ part is semipositive definite but not strictly positive), and yet $(\omega+d(Jd\vp_0))^n>0$. This is possible because in this case the $(2,0)+(0,2)$ part of $d(Jd\vp_0)$ contributes a strictly positive amount.

This indicates that the problem with Gromov's suggestion is that the $2$-form $d(Jd\vp)$ is in general not of
type $(1,1)$ with respect to $J$, due to the fact that $J$ may not be integrable (in fact, $d(Jd f)$ is of type $(1,1)$ for all functions $f$ if and only if $J$ is integrable). Its $(1,1)$ part (up to an unimportant factor of $2$) will be denoted by
$$\mn\de\db\vp=\frac{1}{2}(d(Jd\vp))^{(1,1)},$$
which agrees with the standard notation when $J$ is integrable (see also section \ref{sectbas} for more explanations). This quantity was apparently first explicitly considered in \cite{Ha}.

We show that an analogue of Gromov's problem {\em does} hold after replacing $d(Jd\vp)$ with $\ddbar\vp$.   In fact we do not even require the manifold to be symplectic.  We obtain the following result for almost complex manifolds equipped with an almost Hermitian metric $\omega$.

Our main theorem is the following:

\begin{theorem}\label{main}
Let $(M, \omega, J)$ be a compact almost Hermitian manifold of real dimension $2n$.  Given a smooth function $F$ there exists a unique pair $(\varphi, b)$ where $\varphi \in C^{\infty}(M)$ and $b \in \mathbb{R}$, solving
\begin{equation} \label{ma}
\begin{split}
(\omega + \sqrt{-1} \partial \bar \partial \varphi )^n = {} & e^{F+b} \omega^n, \\ \quad \omega + \sqrt{-1} \partial \bar \partial \varphi > 0, \quad {} &\sup_M \varphi=0.
\end{split}
\end{equation}
\end{theorem}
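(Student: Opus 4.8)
The plan is to prove Theorem~\ref{main} by the continuity method in the parameter $t\in[0,1]$, solving $(\omega+\ddbar\varphi_t)^n=e^{tF+b_t}\omega^n$ with $\omega+\ddbar\varphi_t>0$ and $\sup_M\varphi_t=0$; at $t=0$ this is solved by $(\varphi_0,b_0)=(0,0)$, and the goal is to reach $t=1$. In contrast with the K\"ahler and Hermitian cases one cannot normalise $F$ beforehand: since $J$ is non-integrable, $\ddbar\varphi$ is merely the $(1,1)$-part of $\tfrac12 d(Jd\varphi)$, it is not exact, and $\int_M(\omega+\ddbar\varphi)^n$ genuinely depends on $\varphi$; this is exactly why the unknown constant $b$ must appear. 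Let $T\subset[0,1]$ be the set of $t$ admitting a smooth solution; I will show $T$ is non-empty, open, and closed.

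For openness I would work in H\"older spaces, parametrising $\varphi$ by $\mathcal B=\{\psi\in C^{2,\alpha}(M):\int_M\psi\,\omega^n=0\}$ and letting $b$ vary freely, so that $\mathcal F(\psi,b)=\log\big((\omega+\ddbar\psi)^n/\omega^n\big)-b$ maps the open set $\{\omega+\ddbar\psi>0\}\times\mathbb R$ into $C^{\alpha}(M)$. Its differential at a solution is $(\eta,c)\mapsto\ti\Delta\eta-c$, where $\ti\Delta\eta=\textrm{tr}_{\ti\omega}(\ddbar\eta)$ is a \emph{linear} second-order elliptic operator \emph{without} zeroth-order term, its first-order part being produced by the non-integrability of $J$. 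This $\ti\Delta$ has kernel exactly the constants and, being elliptic of second order with definite principal symbol, has the same Fredholm index as a Laplace operator, namely zero, hence one-dimensional cokernel; moreover the maximum principle shows the constant function $1$ is not in its image (at an interior maximum $\ddbar\eta\le0$). Hence $\ti\Delta(T\mathcal B)$ and $\mathbb R$ span $C^{\alpha}(M)$ as a direct sum, so $(\eta,c)\mapsto\ti\Delta\eta-c$ is an isomorphism $T\mathcal B\times\mathbb R\to C^{\alpha}(M)$, and the implicit function theorem produces a nearby solution (with $\int\psi\,\omega^n=0$, converted back to $\sup\varphi=0$ by subtracting $\sup\psi$, which leaves $b$ unchanged), which elliptic bootstrapping upgrades to $C^{\infty}$. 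It is precisely the extra parameter $b$ that makes this differential surjective.

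For closedness I need a priori bounds on $(\varphi_t,b_t)$ independent of $t$. The constant is controlled immediately by the maximum principle: at a maximum of $\varphi_t$ one has $\ddbar\varphi_t\le0$, hence $e^{tF+b_t}\le1$ there, and symmetrically at a minimum, so $|b_t|\le\sup_M|F|$. The bound $\|\varphi_t\|_{C^0}\le C$ is the first genuinely hard point: with $J$ non-integrable there are no plurisubharmonic functions, so Ko\l odziej's pluripotential estimate is unavailable and one must argue by PDE, via a Moser iteration as in the Hermitian case or an Alexandrov--Bakelman--Pucci estimate in the spirit of B\l ocki and Sz\'ekelyhidi, checking that non-integrability contributes only lower-order terms that can be absorbed. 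Next is the second-order estimate $\textrm{tr}_{\omega}\ti\omega_t\le C$: this is the Yau-type computation applied to $\log(\textrm{tr}_{\omega}\ti\omega)-A\varphi$ (possibly with an auxiliary term to absorb the gradient, or after a separate gradient bound obtained by a blow-up and Liouville argument), carried out with the canonical connection of $(\omega,J)$; the new feature compared with the Hermitian case is a family of terms built from the Nijenhuis tensor and from the non-commutation of the connection's $\nabla$ and $\overline{\nabla}$, and the crux is to show these remain controllable. Once $C^0$ and $C^2$ are in hand, $\ti\omega_t$ is uniformly equivalent to $\omega$, the equation $\log\det(g_{i\bar j}+(\ddbar\varphi)_{i\bar j})=F+b$ is uniformly elliptic and concave in the complex Hessian of $\varphi$, so the Evans--Krylov theorem (in the form allowing the permitted lower-order dependence on $\nabla\varphi$, $\varphi$ and $x$) yields a uniform $C^{2,\alpha}$ bound, after which differentiating the equation and applying Schauder estimates give uniform $C^k$ bounds for all $k$. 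Hence $T$ is closed and $1\in T$.

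Finally, for uniqueness, suppose $(\varphi_1,b_1)$ and $(\varphi_2,b_2)$ both solve \eqref{ma}. At a maximum of $\varphi_1-\varphi_2$ one has $\ti\omega_1\le\ti\omega_2$ as $(1,1)$-forms, whence $e^{F+b_1}\le e^{F+b_2}$ there, and symmetrically the reverse inequality holds, so $b_1=b_2$. Then $\ti\omega_1^n=\ti\omega_2^n$, and telescoping gives $\ddbar(\varphi_1-\varphi_2)\wedge\Theta=0$ with $\Theta=\sum_{k=0}^{n-1}\ti\omega_1^{k}\wedge\ti\omega_2^{n-1-k}>0$ --- a linear second-order elliptic equation without zeroth-order term for $\varphi_1-\varphi_2$ --- so the maximum principle forces $\varphi_1-\varphi_2$ to be constant, and $\sup_M\varphi_i=0$ makes it zero. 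The principal obstacle throughout is the a priori estimate package in the non-integrable setting, above all the second-order estimate with its control of the Nijenhuis and commutator terms, together with the purely PDE-theoretic $C^0$ estimate.
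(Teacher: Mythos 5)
Your overall architecture (continuity method in $t$, free constant $b_t$, $|b_t|\le\sup_M|F|$ by the maximum principle, uniqueness via the telescoped linear equation) matches the paper, and your openness argument is a perfectly good variant: you treat $b$ as a free parameter so that $(\eta,c)\mapsto\ti{\Delta}\eta-c$ is an isomorphism, using only that $\ti{\Delta}$ has index zero, kernel the constants, and $1\notin\mathrm{Im}\,\ti{\Delta}$; the paper instead fixes the cokernel explicitly via Gauduchon's theorem (the positive function $e^{(n-1)\hat u}$ spanning $\ker\ti{\Delta}^*$) and normalizes the nonlinear map accordingly. These are equivalent in content, and yours is arguably leaner.

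The genuine gap is in the closedness step, at the second-order estimate. You propose running the Yau--Aubin computation on $\log(\tr{\omega}{\ti{\omega}})-A\vp$ and "checking that the Nijenhuis and commutator terms remain controllable." They are not, by that method: when one differentiates the equation twice in the direction of the complex Hessian, the non-integrability of $J$ produces \emph{linear} third-order terms involving three barred or three unbarred derivatives of $\vp$ (schematically $e_ie_je_k(\vp)$), whereas the good squared third-order terms available from concavity only control the mixed derivatives $e_k(\ti{g}_{i\ov{j}})$. There is no Cauchy--Schwarz absorption, and this is precisely why the paper abandons the complex Hessian and instead bounds the \emph{real} Hessian, applying the maximum principle to $\log\lambda_1(\nabla^2\vp)+h(|\de\vp|_g^2)+e^{-A\vp}$ with a perturbation of $\lambda_1$ to make it smooth; the resulting second-derivative terms of $\lambda_1$ supply the extra good third-order quantities, and even then one must separately handle the degenerate case where the top eigenvector of $\nabla^2\vp$ points in a direction where the complex Hessian is small (Lemma 5.5 of the paper). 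So the crux you defer is exactly the content of the paper's Section 5, and the quantity you propose does not yield it. Two secondary caveats: a gradient bound by blow-up/Liouville is not available off the shelf here (no suitable Liouville theorem for non-integrable $J$; the paper uses a direct maximum principle with a barrier built from $\vp$), and the Evans--Krylov step in the almost complex setting (via \cite{TWWY}) is stated in terms of a real Hessian bound, which is what the paper's second-order estimate delivers.
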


Namely, this says that the main result of \cite{TW1} holds even when $J$ is a non-integrable almost complex structure.  The equation (\ref{ma}) was first considered by Harvey-Lawson \cite{HL} and Pli\'s \cite{P} in the setting of the Dirichlet problem for $J$-pseudoconvex domains (see Remark 5 below). We note here that there has been a renewed interest recently in the theory of $J$-plurisubharmonic functions on almost complex manifolds, i.e. functions $\vp$ for which $\ddbar\vp>0$ (see e.g. \cite{CGS, Ha, HL, HLP, Pa,P,P2}), and our main theorem fits well into this picture.

The key ingredients for proving Theorem \ref{main} are the following new {\em a priori} estimates:

\pagebreak[3]
\begin{theorem}\label{main2}
Let $(M, \omega, J)$ be a compact almost Hermitian manifold of real dimension $2n$.  Given smooth functions $F$ and $\varphi$ satisfying
\begin{equation} \label{ma2}
\begin{split}
(\omega + \sqrt{-1} \partial \bar \partial \varphi )^n = {} & e^{F} \omega^n, \\ \quad \omega + \sqrt{-1} \partial \bar \partial \varphi > 0, \quad {} &\sup_M \varphi=0,
\end{split}
\end{equation}
then there are uniform $C^\infty$ {\em a priori} estimates on $\vp$ depending only on $(M,\omega,J)$ and bounds for $F$.
\end{theorem}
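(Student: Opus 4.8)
The plan is to prove the estimates in the usual hierarchy for complex Monge-Amp\`ere equations, adapted to the almost complex setting: first a uniform $C^0$ bound for $\vp$; then a uniform upper bound for $\tr{\omega}{\omega'}$, where $\omega' := \omega + \ddbar\vp$; then, using that $\det g'$ is pinned by the equation, uniform ellipticity together with a $C^{2,\alpha}$ estimate; and finally uniform $C^\infty$ bounds by Schauder theory and differentiating the equation. Throughout, the essential new feature is that $J$ is not integrable: $\ddbar\vp$ is only the $(1,1)$-part of $\tfrac12 d(Jd\vp)$, so with respect to a canonical (Chern-type) connection the components satisfy $g'_{i\bar j} = g_{i\bar j} + \nabla_i\nabla_{\bar j}\vp + N\ast d\vp$, the correction being linear in $d\vp$ with coefficients built from the Nijenhuis tensor $N$; moreover, commuting covariant derivatives produces, besides the curvature of $g$, terms in $\nabla N$ and $N\ast N$. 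Every step below must be organised so that these extra terms either carry a good sign or can be absorbed, and the truly new work is in the second-order estimate.

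\textbf{Zeroth order estimate.} Since $\sup_M\vp = 0$ one needs only a lower bound for $\inf_M\vp$, which I would obtain from an Alexandrov--Bakelman--Pucci maximum principle argument in the spirit of B\l{}ocki, in the form adapted to Hermitian metrics in \cite{TW1}. Applying ABP to $\vp$ relative to a small paraboloid touching from below at the minimum point reduces everything to an integral over the contact set; there the real Hessian of $\vp$ is bounded below and, since $\vp$ agrees with a touching paraboloid, $|d\vp|$ is bounded (indeed small), so the $N\ast d\vp$ correction to $\ddbar\vp$ is harmless. In coordinates making $J$ and $\omega$ standard at the point one then bounds $\det_{\mathbb R}D^2\vp$ on the contact set above by a constant times $e^{2F}$, relating the real Hessian determinant to $\det_{\mathbb C}g' = e^{F}\det_{\mathbb C}g$, and the ABP inequality yields $\inf_M\vp \ge -C$.

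\textbf{Second order estimate.} This is the core of the argument, extending to the global non-integrable setting the local estimates of \cite{P}. With $g'$ the almost Hermitian metric of $\omega'$, I would apply the maximum principle to
\[
Q = \log \tr{\omega}{\omega'} - A\vp + (\text{auxiliary term}),
\]
with $A$ large, where the auxiliary term --- if needed --- is a multiple of $|\de\vp|^2_\omega$ tailored to absorb the worst first-derivative contributions, as in the Hermitian case. Differentiating $\log\det g' = F + \mathrm{const}$ twice and feeding it into $\Delta_{g'}\log\tr{\omega}{\omega'}$, the leading positive term is the ``good'' third-order expression ${g'}^{i\bar j}{g'}^{k\bar l}g^{p\bar q}\,\nabla_i g'_{k\bar p}\,\ov{\nabla_j g'_{l\bar q}}$; against it one must control (i) ${g'}^{i\bar j}$ contracted with the curvature of $g$, absorbed by $\Delta_{g'}(-A\vp) = -An + A\tr{g'}{\omega}$ together with $\tr{g'}{\omega} \ge c/\tr{\omega}{\omega'}$; (ii) commutator terms from $[\nabla_i,\nabla_{\bar j}]$ acting on $g'$, which now contain $\nabla N$ and $N\ast N$; and (iii) the genuinely new terms arising because $g'_{i\bar j} = g_{i\bar j} + \nabla_i\nabla_{\bar j}\vp + N\ast d\vp$, so differentiating $g'$ also brings down $\nabla N\ast d\vp$ and, crucially, $N\ast\nabla d\vp$. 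The $N\ast\nabla d\vp$ terms are third order and are not dominated by $\tr{g'}{\omega}$, so they must be absorbed into the good third-order term via Cauchy--Schwarz; this is precisely what forces the $|\de\vp|^2_\omega$ auxiliary term, and it is why the estimate is naturally proved first in the form $\tr{\omega}{\omega'} \le C(1 + \sup_M|\nabla\vp|^2)$. \emph{This absorption step is the one I expect to be the main obstacle}, since it is where non-integrability genuinely enters and where the integrable-case computations do not transfer verbatim; arranging the inequalities so that no uncontrolled third-order term survives is the heart of the proof.

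\textbf{Gradient, $C^{2,\alpha}$, and higher order.} From $\tr{\omega}{\omega'} \le C(1 + \sup_M|\nabla\vp|^2)$ the gradient bound $|\nabla\vp| \le C$ follows either by a direct maximum principle argument on a quantity such as $|\de\vp|^2_\omega\, e^{-A\vp}$, or by a blow-up argument: rescaling around a point of large gradient, the $\nabla J$ terms scale away, the rescaled data converge to the flat model $(\mathbb{C}^n, \oeuc, J_{\mathrm{std}})$, and the Liouville theorem for the Monge-Amp\`ere equation on $\mathbb{C}^n$ \`a la \cite{DK} gives a contradiction. With $|\nabla\vp| + |\Delta\vp| \le C$ and $\det g' = e^{F}\det g \ge c > 0$ the equation is uniformly elliptic and, in small balls, uniformly close to a constant-coefficient equation. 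Since $\ddbar\vp$ is, up to terms linear in $d\vp$, the $J$-invariant part of the real Hessian of $\vp$, the map $\vp \mapsto \big(\det(\omega + \ddbar\vp)\big)^{1/n}$ is a concave function of the real Hessian (a concave function of the complex Hessian composed with a linear projection), so the \emph{real} Evans--Krylov theorem applies and yields a uniform interior, hence global, $C^{2,\alpha}$ estimate. Differentiating the equation and iterating Schauder estimates then produces the full $C^\infty$ bounds depending only on $(M,\omega,J)$ and bounds for $F$, completing the proof.
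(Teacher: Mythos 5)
Your outline follows the standard hierarchy for Hermitian Monge--Amp\`ere equations, but the step you yourself flag as ``the main obstacle'' is a genuine gap, and it is exactly the point where the non-integrable case breaks from the integrable one. Applying the maximum principle to $\log \tr{\omega}{\omega'}$ (with or without a $|\de\vp|^2_\omega$ auxiliary term) produces, after commuting derivatives, linear third-order terms in which all three derivatives of $\vp$ are of the same type (three ``barred'' or three ``unbarred''), coming from the $(0,1)$-parts of brackets such as $[e_i,e_k]$ and from differentiating the $N\ast d\vp$ correction in $\ti{g}_{k\ov{l}} = g_{k\ov{l}} + e_k\ov{e}_l(\vp) - [e_k,\ov{e}_l]^{(0,1)}(\vp)$. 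The good squared term $\ti{g}^{i\ov{j}}\ti{g}^{k\ov{l}}g^{p\ov{q}}\nabla_i \ti{g}_{k\ov{p}}\ov{\nabla_j \ti{g}_{l\ov{q}}}$ controls only \emph{mixed} third derivatives, so these terms cannot be absorbed by Cauchy--Schwarz: the estimate $\tr{\omega}{\omega'}\le C(1+\sup_M|\nabla\vp|^2)$ is not obtainable this way. The actual proof abandons the complex Hessian entirely and bounds the \emph{real} Hessian directly, applying the maximum principle to $\log\lambda_1(\nabla^2\vp) + h(|\de\vp|^2_g) + e^{-A\vp}$ (with a perturbation of $\lambda_1$ to make it smooth, as in Warren--Yuan and Sz\'ekelyhidi). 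The concavity of $\lambda_1$ supplies a different family of good third-order terms $\sum_{\alpha>1}\ti{g}^{i\ov{i}}|e_i(\vp_{V_\alpha V_1})|^2/(\lambda_1-\lambda_\alpha)$, and the bad term $-\ti{g}^{i\ov{i}}|e_i(\vp_{V_1V_1})|^2/\lambda_1^2$ is handled by a delicate case analysis, the hardest point being when $V_1$ points in directions where the complex Hessian is small. Also note the order of estimates: the gradient bound is proved \emph{before} the Hessian bound, by a direct maximum principle on $e^{f(\vp)}|\de\vp|^2_g$ with a barrier $f(\vp)$, and is used as input to the real Hessian estimate; your plan of deducing the gradient bound afterwards from a (trace-of-)complex-Hessian bound never gets off the ground because that bound is not available.

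A secondary gap: your $C^{2,\alpha}$ step invokes real Evans--Krylov, but the relevant almost-complex version (the result of \cite{TWWY} used in the paper) takes a bound on the full \emph{real} Hessian of $\vp$ as input, not merely $|\nabla\vp|+|\Delta\vp|\le C$ or a bound on $\tr{\omega}{\omega'}$. A complex Hessian bound does not control the $(2,0)+(0,2)$ part of $D^2\vp$, so even granting your second-order estimate you would still be missing the real Hessian bound that the concavity argument requires. In the paper this is not an issue precisely because the second-order estimate is proved for the real Hessian from the start.
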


We make now a few remarks about our results:

\bigskip
\noindent
1.
  We can also interpret the Monge-Amp\`ere equation \eqref{ma} in terms of Ricci curvature forms. More precisely,
we can associate to an almost Hermitian metric $\omega$  a {\em canonical connection} (see e.g. \cite{TWY}), whose curvature form can be expressed as a skew-Hermitian matrix of $2$-forms $\{\Omega^i_j\}$ ($1\leq i,j\leq n$) in any local unitary frame, and defining
$$\Ric(\omega):=\mn \sum_{i=1}^n\Omega^i_i,$$
we obtain a globally defined closed real $2$-form, which is cohomologous to the first Chern class $2\pi c_1(M,J)$ in $H^2(M,\mathbb{R})$. If $\vp$ is a smooth function with $\ti{\omega}:=\omega+\ddbar\vp>0$  then \eqref{ma} holds for some constant $b$ if and only if
\begin{equation} \label{ricci}
\Ric(\ti{\omega})=\Ric(\omega)- \frac{1}{2}d(Jd F),
\end{equation}
as follows easily from \cite[(3.16)]{TWY}.
Note however that in general there are representatives of the first Chern class $2\pi c_1(M,J)$ in $H^2(M,\mathbb{R})$ which cannot be written in
the form $\Ric(\omega)- \frac{1}{2}d(Jd F)$ for any $\omega$ and $F$, even when $J$ is integrable (cf. \cite[Corollary 2]{TW0}). Note also that in general $\Ric(\omega)$ is different from the Riemannian Ricci curvature of the Levi-Civita connection of the Riemannian metric induced by $\omega$. For example, when $(M,J)$ is the Kodaira-Thurston $4$-manifold with a certain explicit almost complex structure, then one can construct an explicit almost Hermitian metric $\omega$ with $\Ric(\omega)=0$ (see e.g. \cite[Theorem 4.1]{TW35}), and yet $M$ does not admit any Ricci-flat Riemannian metrics. Indeed, four-dimensional Einstein manifolds satisfy $\chi(M)\geq 0$ with equality only when the metric is flat \cite[6.32]{Bes}. The Kodaira-Thurston manifold is a $T^2$-bundle over $T^2$ and therefore has $\chi(M)=0$, so if $M$ admitted an Einstein metric, $M$ would be finitely covered by a torus, which is not the case.

\bigskip
\noindent
2. Suppose now that $(M^{2n},\omega, J)$ is a compact \emph{symplectic} manifold with $J$  an almost complex structure tamed by $\omega$.  In this case  $\omega^{(1,1)}$ is an almost Hermitian metric.
Given a smooth function $F$ we obtain a pair $(\vp,b)$ solving
\begin{equation} \label{mas}
\begin{split}
(\omega^{(1,1)} + \sqrt{-1} \partial \bar \partial \varphi )^n = {} & e^{F+b} \omega^n, \\ \quad \omega^{(1,1)} + \sqrt{-1} \partial \bar \partial \varphi > 0, \quad {} &\sup_M \varphi=0.
\end{split}
\end{equation}
If we define $\ti{\omega}=\omega+\frac{1}{2}d(Jd\vp),$
then a simple calculation (cf. \cite[Proposition 5]{De}) shows that $\ti{\omega}$ is a symplectic form taming $J$ and
\begin{equation}\label{subs}
\ti{\omega}^n\geq (\omega^{(1,1)}+\ddbar\vp)^n=e^{F+b}\omega^n.
\end{equation}

\bigskip
\noindent
3.  A different extension of Yau's Theorem to the symplectic setting was proposed by Donaldson \cite{Do}.  He considered a symplectic $4$-manifold $(M, \omega, J)$ with $J$ a tamed almost complex structure, and conjectured that, given a smooth function $F$, and given a symplectic form $\tilde{\omega}$ cohomologous to $\omega$, compatible with $J$ and solving
$$\tilde{\omega}^2 = e^F \omega^2,$$
then $\ti{\omega}$ should have uniform $C^\infty$ {\em a priori} estimates.
Donaldson showed that this conjecture and various extensions of it would have consequences for symplectic topology.   This problem is fundamentally different from our setup, since here the difference $\tilde{\omega}-\omega$ is not given by a simple operator applied to a function  $\varphi$.  Donaldson's conjecture remains open in general, but it is known to hold in some special cases \cite{W, TWY, TW3, TW35, FLSV, BFV}.

\bigskip
\noindent
4. One can also consider the parabolic Monge-Amp\`ere equation
$$\frac{\de \vp_t}{\de t}=\log\frac{(\omega + \ddbar\vp_t)^n}{\omega^n}-F,\quad \vp_0=0,$$
where we require that $\omega+\ddbar\vp_t>0$. Analogously to the result proved by Cao \cite{Ca} in the K\"ahler case and Gill \cite{G} in the Hermitian case, we expect that the techniques developed in this paper can be used to show that a smooth solution $\vp_t$ exists for all $t\geq 0$, and after suitable normalization converges smoothly to the solution of \eqref{ma}.

Similarly, we can consider the equation
$$\frac{\de \vp_t}{\de t}=\log\frac{(\omega-t\Ric(\omega)^{(1,1)} + \ddbar\vp_t)^n}{\omega^n},\quad \vp_0=0,$$
where now we require that $\omega-t\Ric(\omega)^{(1,1)} + \ddbar\vp_t>0$. This is in fact equivalent to the evolution equation for almost Hermitian forms
$$\frac{\de\omega_t}{\de t}=-\Ric(\omega_t)^{(1,1)},\quad \omega_0=\omega,$$
which is the K\"ahler-Ricci flow if $J$ is integrable and $\omega_0$ is K\"ahler, and the Chern-Ricci flow \cite{G, TW2} if $J$ is integrable. Again, using the techniques developed in this paper one should be able to characterize the maximal existence time of this flow, exactly as in \cite{TZ, TW2}.

\bigskip
\noindent
5. As in the Hermitian case treated in \cite{TW4}, the constant $b$ that appears in Theorem \ref{main} is in general a bit mysterious, and cannot be obtained from $\omega$ and $F$ via a simple formula (unless $J$ is integrable and $\de\db\omega=0=\de\db(\omega^2)$, in which case $b=\log\frac{\int_M\omega^n}{\int_M e^F\omega^n}$). One should really think of the pair $(\vp,b)$ as the unique solution of the PDE \eqref{ma}, and in general one cannot expect solutions of PDEs to have simple expressions. What we do know nevertheless is that $|b|\leq \sup_M|F|$, which follows from a simple maximum principle argument.

\medskip

\noindent
{\bf Outline of the proof.} We now discuss the proof of our main results, and relate this to the work of Pli\'s \cite{P} who, as noted above, investigated this problem on $J$-pseudoconvex domains (Harvey-Lawson \cite{HL} obtained weak solutions in the viscosity sense, and in a more general setting).  We prove the \emph{a priori} estimates of Theorem \ref{main2} in four steps: the zero order estimate for $\varphi$, the first order estimate, the real Hessian bound and then higher order estimates.

For the zero order estimate we adapt an approach of Sz\'ekelyhidi \cite{S}, which in turn uses ideas of B{\l}ocki \cite{Bl,Bl2}.  The idea is to  work locally near the infimum of $\varphi$ and use a modification of the Alexandrov-Bakelman-Pucci maximum principle.  Sz\'ekelyhidi's argument holds for a large class of equations, but assuming an  integrable complex structure.  We show that the extra terms arising from the non-integrability can be controlled.  This argument is contained in Section \ref{sectionzeroorder}.

The next step is to bound the first derivatives of $\varphi$, and we carry this out in Section \ref{sectionfirstorder}.  We use a  maximum principle argument, adapted from Pli\'s \cite{P}.   In particular we compute using a unitary frame $e_1, \ldots, e_n$, which turns out to be quite convenient for this problem.  The difference with \cite{P} is that, as would be expected in the compact case, we need to replace Pli\'s's auxiliary global plurisubharmonic function with a certain choice of ``barrier function'' involving $\varphi$ itself.

The heart of the paper is Section \ref{sectionsecondorder}, where we prove a bound on the real Hessian of $\varphi$.  Here, the argument diverges substantially from the study of other similar nonlinear PDEs on complex manifolds where the approach is to bound first the \emph{complex} Hessian of $\varphi$ (see e.g. \cite{Bl3, GL} for instances where a real Hessian bound is obtained for the complex Monge-Amp\`ere equation, with $J$ integrable, after having first obtained a complex Hessian bound).  There are difficulties in carrying out the analogous computation for the complex Hessian here, because of some  linear third order terms which, roughly speaking, involve three ``barred'' or ``unbarred'' partial derivatives of $\varphi$ and cannot be controlled by the usual squares of third order terms.  Instead, we use an important idea of Pli\'s \cite{P}, which is to directly bound the real Hessian of $\varphi$. However, as acknowledged by Pli\'s in a private communication, there is an error in this argument in \cite{P} (in obtaining the first displayed equation of page 981 from the previous lines).
We take a different approach, which corrects \cite{P}, and turns out to be substantially more intricate.  We apply the maximum principle to a quantity involving the largest eigenvalue $\lambda_1$  (cf. \cite{WY, S, STW}) of the real Hessian of $\varphi$.  This gives us some good third order terms which are sufficient, after a series of rather technical lemmas, to push the argument through.   One source of complication is the need to rule out the case when the largest eigenvector of the \emph{real} Hessian of $\varphi$ is in a
direction where the \emph{complex} Hessian is very small (see Lemma \ref{lemmados} below).

Once we have the real Hessian of $\varphi$ bounded, it is then straightforward to obtain the higher order estimates by applying directly an Evans-Krylov type result from \cite{TWWY} and then some standard bootstrapping arguments.  In Section \ref{sectioncomplete} we describe how to obtain the main result Theorem \ref{main} using a continuity argument similar to that of \cite{TW0}.

\bigskip

Before we start with the proofs of the main results,  we describe first some basic results, and notation, on almost complex manifolds.

\bigskip

{\bf Acknowledgments. }We thank S. Pli\'s and G. Sz\'ekelyhidi for some useful communications, and the referees for pertinent comments. The first-named author would like to thank his advisor G. Tian for encouragement and support. The second-named author was partially supported by a Sloan Research Fellowship and NSF grant DMS-1308988, and the third-named author by NSF grant DMS-1406164. This work was carried out while the first-named author was visiting the Department of Mathematics at Northwestern University, supported by the China Scholarship Council (File No. 201506010010), and partly while the second-named author was visiting the Center for Mathematical Sciences and Applications at Harvard University. We would like to thank these institutions for their hospitality and support.

\section{Basic results and notation}\label{sectbas}

Let $M^{2n}$ be a compact manifold of real dimension $2n$ (without boundary) equipped with an almost complex structure $J$, namely an endomorphism of the tangent bundle satisfying $J^2 = - \textrm{Id}$.  The complexified tangent space  $T^{\mathbb{C}}M$ can be decomposed as a direct sum of the two eigenspaces $T^{(1,0)}M$ and $T^{(0,1)}M$ of $J$, corresponding to eigenvalues $\sqrt{-1}$ and $-\sqrt{-1}$ respectively.  Similarly, extending $J$ to $1$-forms $\alpha$ by  $(J\alpha)(X) = - \alpha (JX)$, we obtain  a decomposition of $T^{\mathbb{C}}M^*$ into the $\sqrt{-1}$ and $-\sqrt{-1}$ eigenspaces, spanned by the
 $(0,1)$ and $(1,0)$ forms respectively.  Thus, with the obvious definitions,
 any complex differential form of degree $k$ can be expressed uniquely as a linear combination of $(p,q)$ forms with $p+q=k$.

An \emph{almost Hermitian metric} $g$ on $(M^{2n}, J)$ is a Riemannian metric satisfying
$$g(JX, JY) = g(X,Y), \qquad \textrm{for all } X,Y \in TM.$$
Such metrics always exist in abundance on almost complex manifolds.
The metric $g$ defines a positive definite real $(1,1)$ form $\omega$, given by
$$\omega(X,Y)=g(JX,Y).$$
Conversely, such an $\omega$ defines an almost Hermitian metric $g$ by
$$g(X,Y)=\omega(X,JY).$$
As  we have already done in the introduction, we abuse terminology by referring to the positive definite $(1,1)$ form $\omega$ as an almost Hermitian metric.

The exterior derivative acting on $(p,q)$ forms splits as
$$d=\de+\db+T+\ov{T},$$
where $T$ changes the bidegree by $(2,-1)$ and $\ov{T}$ by $(-1,2)$ (these are essentially given by the Nijenhuis tensor) and so we can define
$$\mn\de\db\vp:=\mn\de(\db\vp).$$
It is immediate to see that this is indeed a real $(1,1)$ form.
A simple calculation shows that
$$\mn\de\db\vp=\frac{1}{2}(d(Jd\vp))^{(1,1)}.$$
We also have (see e.g. \cite[(2.5)]{HL}) that for any two $(1,0)$ vector fields $V,W,$
\begin{equation}\label{hess}
(\de\db \vp)(V,\ov{W})=V\ov{W}(\vp)-[V,\ov{W}]^{(0,1)}(\vp).
\end{equation}

We extend the action of $J$ to $p$-forms by
$$(J\alpha)(X_1,\dots,X_p):=(-1)^p\alpha(JX_1,\dots,JX_p),$$
which satisfies $J^2=(-1)^p$. If $*$ is the Hodge star operator of the Riemannian metric $g$ defined by $\omega$ then the actions of $*$ and $J$ on $p$-forms commute (see e.g. \cite[Lemma 1.10.1]{Gab}). We will say that
$\omega$ is Gauduchon if
\begin{equation}\label{gau}
d^*(Jd^*\omega)=0,
\end{equation}
where $d^*$ is the adjoint of $d$ with respect to $g$.
A short calculation shows that \eqref{gau} is equivalent to
\begin{equation}\label{gau2}
d(Jd(\omega^{n-1}))=0,
\end{equation}
so we see that when $J$ is integrable this condition reduces to the well-known one.
\begin{theorem}[Gauduchon \cite{Ga0}]\label{gaud}
Let $(M^{2n},\omega,J), n\geq 2,$ be a compact almost Hermitian manifold. Then there exists a smooth function $u$, unique up to addition of a constant, such that the conformal almost Hermitian metric $e^u \omega$ is Gauduchon.
\end{theorem}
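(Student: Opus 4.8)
The plan is to recast the Gauduchon condition for $e^u\omega$ as the vanishing of a \emph{fixed} linear second-order elliptic operator applied to the conformal factor, and then solve it by the classical theory of the principal eigenvalue of such an operator. By the equivalence of \eqref{gau} and \eqref{gau2} recorded above, $e^u\omega$ is Gauduchon if and only if $d(Jd((e^u\omega)^{n-1}))=0$; since $(e^u\omega)^{n-1}=e^{(n-1)u}\omega^{n-1}$ and $n\geq 2$ (so that $\omega^{n-1}$ is a form of positive degree), this amounts to finding a positive smooth function $w$ with $d(Jd(w\,\omega^{n-1}))=0$, after which $u=\tfrac{1}{n-1}\log w$. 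Using that $\omega^n$ is a volume form, I would define $L\colon C^\infty(M)\to C^\infty(M)$ by $d(Jd(\phi\,\omega^{n-1}))=(L\phi)\,\omega^n$; this is a linear second-order operator and the task is to produce a positive element of $\ker L$.

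The first point to check — and the only place where non-integrability genuinely enters — is that $L$ is elliptic with principal symbol a positive multiple of that of the Laplacian. Writing $d(\phi\omega^{n-1})=d\phi\wedge\omega^{n-1}+\phi\,d\omega^{n-1}$, the second term is lower order, and in $d(J(d\phi\wedge\omega^{n-1}))$ one expands $J$ on the form $d\phi\wedge\omega^{n-1}$ of type $(n,n-1)+(n-1,n)$ and differentiates again; using $d=\de+\db+T+\ov T$ together with $d^2\phi=0$, the $(2,0)$ and $(0,2)$ components of $d(d\phi)$ — precisely the ones carrying the torsion terms $T\db\phi$, $\ov T\de\phi$ — wedge with $\omega^{n-1}$ into forms of degree greater than $2n$ and hence vanish, so only $\ddbar\phi\wedge\omega^{n-1}$ survives at second order. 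By the pointwise identity $n\,\chi\wedge\omega^{n-1}=(\tr{\omega}{\chi})\,\omega^n$ for real $(1,1)$-forms $\chi$, this yields ellipticity with the Laplacian as principal symbol, and shows the Nijenhuis tensor perturbs only the first- and zeroth-order coefficients of $L$. This is what makes Gauduchon's classical argument go through verbatim.

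With $L$ in hand I would run the classical argument. By Stokes' theorem $\int_M(L\phi)\,\omega^n=\int_M d(Jd(\phi\omega^{n-1}))=0$ for every $\phi$, so the formal $L^2(\omega^n)$-adjoint satisfies $L^*1=0$; in particular $L^*$ has no zeroth-order term, whence the strong maximum principle forces $\ker L^*=\mathbb{R}$, the constants. Since a scalar second-order elliptic operator on a closed manifold has index zero, $\dim\ker L=\dim\ker L^*=1$. Finally, the theory of the principal eigenvalue (e.g.\ the Krein--Rutman theorem applied to $(\lambda I-L)^{-1}$ for $\lambda$ large, or Berestycki--Nirenberg--Varadhan) gives a principal eigenvalue $\lambda_1(L)$ with a positive eigenfunction, the fact that $\lambda_1$ is the unique eigenvalue admitting a positive eigenfunction, and the equality $\lambda_1(L)=\lambda_1(L^*)$; since $1>0$ is an eigenfunction of $L^*$ with eigenvalue $0$, we get $\lambda_1(L^*)=0$, hence $\lambda_1(L)=0$, so there is $w>0$ with $Lw=0$. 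Setting $u=\tfrac{1}{n-1}\log w$ gives existence, and uniqueness up to an additive constant is immediate from $\dim\ker L=1$ (two such factors lie in the one-dimensional $\ker L$, hence are proportional).

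Everything here is classical once the operator $L$ is set up; I expect the only step needing real care to be the degree count in the second paragraph, showing that wedging against $\omega^{n-1}$ annihilates the torsion contributions to the top-degree form $d(Jd(\phi\omega^{n-1}))$, so that $L$ remains uniformly elliptic with Laplacian principal part regardless of integrability of $J$.
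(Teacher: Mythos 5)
Your argument is correct, and its skeleton coincides with the sketch the paper gives (inside the proof of Theorem \ref{solvepoi}): both reduce the Gauduchon condition for $e^u\omega$ to producing a positive function in the kernel of the scalar second-order elliptic operator $\phi\mapsto d(Jd(\phi\,\omega^{n-1}))/\omega^n$, which is (up to the factor $\tfrac{2}{n}$) the formal adjoint $\Delta^*$ of the canonical Laplacian $\Delta^C$, and both obtain $\dim\ker=1$ from the index-zero property together with the strong maximum principle applied to $\Delta^C$. The genuine divergence is in the positivity step. The paper argues by duality: the image of $\Delta^C$ is the annihilator of $\ker\Delta^*$, the strong maximum principle shows that no nonzero constant-sign function lies in that image, hence a generator $f$ of $\ker\Delta^*$ cannot change sign (otherwise one constructs $h\geq 0$, $h\not\equiv 0$, with $\int_M hf\,\omega^n=0$), and a second application of the strong maximum principle upgrades $f\geq 0$ to $f>0$. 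You instead invoke the principal-eigenvalue/Krein--Rutman theory for non-self-adjoint elliptic operators, using $L^*1=0$ and the equality $\lambda_1(L)=\lambda_1(L^*)$ to place a positive eigenfunction in $\ker L$. Both routes are classical and complete; the paper's is more self-contained (only Hopf's maximum principle and the Fredholm alternative), while yours delegates positivity to a larger black box but avoids the ad hoc construction of the test function $h$. Your ellipticity check is indeed the one point where non-integrability must be addressed; the cleanest version of it is to observe that $L$ and its formal adjoint $\tfrac{2}{n}\Delta^C$ share the same principal symbol, and that $\Delta^C$ is elliptic with symbol half that of $\Delta$ by \eqref{lapp} --- your direct degree count (the $(2,0)$ and $(0,2)$ torsion contributions die against $\omega^{n-1}$) reaches the same conclusion.
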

For convenience, we will give a brief sketch of the proof of this result in the proof of Theorem \ref{solvepoi}.

Let now $(M^{2n},\omega,J)$ be almost Hermitian, and $f$ a smooth function on $M$. We define the canonical Laplacian of $f$ by
\begin{equation} \label{canonical}
\Delta^{C}f=\frac{n\omega^{n-1}\wedge \ddbar f}{\omega^n}=\frac{n\omega^{n-1}\wedge d(Jdf)}{2\omega^n}.
\end{equation}
If $\Delta$ denotes the usual Laplace-Beltrami operator of $g$ then we have the relation
\begin{equation}\label{lapp}
\Delta f = 2\Delta^{C}f +\tau(df),
\end{equation}
where $\tau$ is the ``torsion vector field'' of $(\omega,J)$ (the dual of its Lee form), see e.g. \cite[Lemma 3.2]{T}.

For the ``openness'' part of the continuity method, we will need the following result:
\begin{theorem}[Gauduchon \cite{Ga0}]\label{solvepoi}
Let $(M^{2n},\omega,J), n\geq 2,$ be a compact almost Hermitian manifold.  Fix a nonnegative integer $k$, and $0<\alpha<1$.  Given a function $h \in C^{k, \alpha}(M)$, there exists a function $f \in C^{k+2, \alpha}(M)$ solving
\begin{equation}\label{poi}
\Delta^Cf=h,
\end{equation}
if and only if
\begin{equation}\label{obstr}
\int_M h e^{(n-1)u}\omega^n=0,
\end{equation}
where $u$ is as in Theorem \ref{gaud}. In this case the solution $f$ is unique up to addition of a constant.
\end{theorem}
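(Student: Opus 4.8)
The plan is to solve the linear equation $\Delta^C f = h$ by reducing it to a standard self-adjoint elliptic problem via the conformal factor from Theorem \ref{gaud}. First I would recall that $\Delta^C$ is a second-order linear elliptic operator (with no zeroth-order term, since $\Delta^C(\mathrm{const})=0$), so its analysis is governed by the Fredholm alternative: we must identify its kernel, and the kernel and cokernel of its formal adjoint, with respect to a suitable $L^2$ pairing. The key observation is that the natural pairing is \emph{not} the one coming from $\omega^n$, but the weighted one $\langle f_1,f_2\rangle = \int_M f_1 f_2\, e^{(n-1)u}\omega^n$, where $e^u\omega$ is the Gauduchon representative. Indeed, unwinding the definition \eqref{canonical}, for the metric $\ti\omega = e^u\omega$ one computes $n\ti\omega^{\,n-1}\wedge\ddbar f = e^{(n-1)u}\, n\omega^{n-1}\wedge\ddbar f$, so that $\Delta^C_{\ti\omega} f \cdot \ti\omega^{\,n} = e^{(n-1)u}(\Delta^C_\omega f)\,\omega^n$ only up to the factor relating $\ti\omega^n = e^{nu}\omega^n$; the point is that $\Delta^C_\omega f$ and $\Delta^C_{\ti\omega}f$ differ by a positive conformal factor and thus have the same kernel and the same solvability theory. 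Replacing $\omega$ by its Gauduchon representative, we may therefore assume from the outset that $\omega$ itself is Gauduchon, i.e. $d(Jd(\omega^{n-1}))=0$, in which case $u\equiv 0$ and the obstruction \eqref{obstr} becomes simply $\int_M h\,\omega^n=0$.

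The heart of the matter is then to show that when $\omega$ is Gauduchon, $\Delta^C$ is, up to sign, \emph{formally self-adjoint with respect to $\omega^n$}, and its kernel consists precisely of the constants. For self-adjointness I would integrate by parts: writing $n\omega^{n-1}\wedge\ddbar f = \tfrac12 n\,\omega^{n-1}\wedge d(Jdf)$, one has, for any smooth $f_1,f_2$,
\begin{equation}
\int_M f_1\,(\Delta^C f_2)\,\omega^n \;=\; \tfrac{n}{2}\int_M f_1\, d\big(Jdf_2\big)\wedge\omega^{n-1}
\;=\; -\tfrac{n}{2}\int_M d f_1 \wedge (Jdf_2)\wedge\omega^{n-1} \;-\; \tfrac{n}{2}\int_M f_1\,(Jdf_2)\wedge d(\omega^{n-1}),
\end{equation}
and a parallel identity holds with $f_1,f_2$ swapped. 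The difference of the two first terms is $-\tfrac{n}{2}\int_M\big(df_1\wedge Jdf_2 - df_2\wedge Jdf_1\big)\wedge\omega^{n-1}$; using $J^2=-\mathrm{Id}$ on $1$-forms together with the compatibility of $J$ with $\omega$ (so that $J$ acts as an isometry and $\alpha\wedge J\beta\wedge\omega^{n-1}$ is symmetric in $\alpha,\beta$), this difference vanishes. The remaining terms involve $d(\omega^{n-1})$, not $d(Jd(\omega^{n-1}))$, so the naive estimate is not quite enough; here one must be slightly more careful and integrate by parts once more, moving the derivative off $Jdf_2$ and onto $f_1\,d(\omega^{n-1})$, at which stage the Gauduchon condition $d(Jd(\omega^{n-1}))=0$ is exactly what kills the leftover term. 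The upshot is $\int_M f_1\Delta^C f_2\,\omega^n = \int_M f_2\Delta^C f_1\,\omega^n$, i.e. $\Delta^C$ is self-adjoint in $L^2(\omega^n)$. Taking $f_1=f_2=f$ in the first displayed line and integrating by parts once gives $\int_M f\,\Delta^C f\,\omega^n = -\tfrac{n}{2}\int_M df\wedge Jdf\wedge\omega^{n-1} = -\tfrac12\int_M |df|_\omega^2\,\omega^n \le 0$ (the middle expression being a nonnegative multiple of $|df|^2$ by positivity of $\omega$), so $\Delta^C f = 0$ forces $df\equiv 0$, i.e. $f$ is constant; thus $\ker\Delta^C = \mathbb{R}$.

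With these two facts in hand the theorem follows from standard linear elliptic theory. The operator $\Delta^C\colon C^{k+2,\alpha}(M)\to C^{k,\alpha}(M)$ is elliptic on the compact manifold $M$, hence Fredholm; by the self-adjointness just established, its cokernel is identified (via the $L^2(\omega^n)$ pairing) with $\ker\Delta^C = \mathbb{R}\cdot 1$, so $h$ lies in the range if and only if $\int_M h\cdot 1\,\omega^n = 0$, which after undoing the conformal reduction is precisely \eqref{obstr}. Schauder estimates then give the claimed $C^{k+2,\alpha}$ regularity of the solution, and uniqueness up to a constant is just the fact that $\ker\Delta^C=\mathbb{R}$. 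I expect the main obstacle to be the integration-by-parts bookkeeping in the self-adjointness computation: because $\ddbar$ is built from $\de,\db$ which do not individually satisfy a clean integration-by-parts formula in the non-integrable setting (the torsion operators $T,\ov T$ intervene), one has to work with the real operator $d(Jd\,\cdot)$ throughout and track carefully how the $(2,-1)$ and $(-1,2)$ components interact with $\omega^{n-1}$ — it is precisely to absorb these that the Gauduchon condition, rather than mere closedness of $\omega$, is needed.
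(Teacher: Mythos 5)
Your overall framework (conformal reduction to the case where $\omega$ itself is Gauduchon, then the Fredholm alternative for the elliptic operator $\Delta^C$) is sound, and your conformal bookkeeping does transform the obstruction correctly into \eqref{obstr}. But the pivotal claim that $\Delta^C$ is formally self-adjoint with respect to $\omega^n$ when $\omega$ is Gauduchon is false, and this is a genuine gap rather than a bookkeeping issue. Carrying out your integration by parts gives
\[
\int_M f_1\,\Delta^Cf_2\,\omega^n-\int_M f_2\,\Delta^Cf_1\,\omega^n=\frac n2\int_M\bigl(f_1\,Jdf_2-f_2\,Jdf_1\bigr)\wedge d(\omega^{n-1}),
\]
after the symmetric first-order terms cancel. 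Writing $f_1Jdf_2-f_2Jdf_1=Jd(f_1f_2)-2f_2\,Jdf_1$, only the exact piece $\int_M Jd(f_1f_2)\wedge d(\omega^{n-1})$ is killed by a further integration by parts together with the Gauduchon condition $d(Jd(\omega^{n-1}))=0$; the remaining piece $-n\int_M f_2\,Jdf_1\wedge d(\omega^{n-1})$ survives and is nonzero in general. Equivalently, by \eqref{lapp} one has $\Delta^C=\frac12\bigl(\Delta-\tau(d\,\cdot)\bigr)$, and a first-order perturbation of the Laplace--Beltrami operator by the torsion vector field $\tau$ is self-adjoint only when $\tau\equiv0$ (a balanced-type condition), whereas Gauduchon gives only $\mathrm{div}\,\tau=0$, i.e.\ exactly that the \emph{constants} lie in the kernel of the formal adjoint. (Your energy identity for $f_1=f_2=f$ is unaffected, since there the leftover term is $\frac n4\int_M Jd(f^2)\wedge d(\omega^{n-1})=0$ by Gauduchon, so your proof that $\ker\Delta^C=\mathbb R$ is fine --- and it also follows directly from the strong maximum principle, with no Gauduchon hypothesis.)

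The repair is to use only what the Fredholm alternative actually needs: $\Delta^C$ has index zero (same principal symbol as $\frac12\Delta$), its kernel is the constants, hence $\dim\ker(\Delta^C)^*=1$; and the computation \eqref{ibp} shows that for Gauduchon $\omega$ the constant function $1$ lies in $\ker(\Delta^C)^*$ and therefore spans it. Consequently $h$ is in the image if and only if $\int_M h\,\omega^n=0$, which after undoing your conformal reduction is \eqref{obstr}. This is in essence the paper's argument run in the opposite direction: the paper works directly with the explicit adjoint $\Delta^*f=-\frac12 d^*(Jd^*(f\omega))$ of the original $\Delta^C$, shows via the maximum principle that its one-dimensional kernel is spanned by a positive function $e^{(n-1)u}$, and identifies the equation $\Delta^*f=0$ with the statement that $e^u\omega$ is Gauduchon --- thereby proving Theorem \ref{gaud} and the solvability criterion simultaneously, with no self-adjointness claim anywhere.
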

\begin{proof}
Although this result is not explicitly stated there, the proof is contained in \cite{Ga0}. For the convenience of the reader we give a brief sketch, referring to \cite{Ga0} for details.
Let $\Delta^*$ be the formal $L^2$ adjoint of $\Delta^C$, which is given by
$$\Delta^*f=- \frac{1}{2}d^*(Jd^*(f\omega)),$$
where $f\in C^\infty(X,\mathbb{R})$.
This is an elliptic second order real differential operator, whose index, like the index of $\Delta^C$, is zero. Since the kernel of $\Delta^C$ consists just of constants, it follows that $\dim_{\mathbb{R}}\ker \Delta^*=1$.
The strong maximum principle implies that the only smooth real function in the image of $\Delta^C$ which has constant sign is the zero function, and this implies that every function in the kernel of $\Delta^*$ has constant sign. Thus we can choose a smooth function $f\not\equiv 0$ with $\Delta^*f=0$ and $f\geq 0$, and then another application of the strong maximum principle shows that in fact $f>0$. Therefore we can write $f=e^{(n-1)u}$, for some $u\in C^\infty(X,\mathbb{R})$, and the fact that $\Delta^*f=0$ exactly says that $e^u\omega$ is Gauduchon.

The theorem now follows from the Fredholm alternative, since \eqref{obstr} means that $h$ is orthogonal to $\ker \Delta^*$, which is then equivalent to $h$ being in the image of $\Delta^C$.
\end{proof}

For later use, we have the following result.
\begin{proposition}\label{elle1}
Let $(M^{2n},\omega,J)$ be a compact almost Hermitian manifold, with $\omega$ its associated real $(1,1)$ form. Then there is a constant $C>0$ depending only on $(M, \omega, J)$ such that every smooth function $\vp$ on $M$ which satisfies
\begin{equation}\label{norm}
\omega+\ddbar\vp>0,\quad \sup_M\vp=0,
\end{equation}
also satisfies
\begin{equation}\label{l1}
\int_M (-\vp)\omega^n\leq C.
\end{equation}
\end{proposition}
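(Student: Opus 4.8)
The plan is to obtain the $L^1$ bound from a uniform Poincaré-type inequality, adapting the standard argument from the Kähler/Hermitian setting (cf.\ \cite{TW1}). The crucial point is that, although the operator $\ddbar$ is no longer quite $d$-closed and the ambient metric is only almost Hermitian, the quantity $\int_M \omega^{n-1}\wedge\ddbar\vp$ is still controlled: indeed, $\ddbar\vp = \frac12(d(Jd\vp))^{(1,1)}$, and pairing with $\omega^{n-1}$ we may write $n\,\omega^{n-1}\wedge\ddbar\vp = (\Delta^C\vp)\,\omega^n$, so $\int_M \omega^{n-1}\wedge\ddbar\vp = \frac1n\int_M (\Delta^C\vp)\,\omega^n$, which is \emph{not} automatically zero but is bounded: applying \eqref{lapp}, $\Delta^C\vp = \frac12\Delta\vp - \frac12\tau(d\vp)$, and integrating, $\int_M(\Delta^C\vp)\omega^n = -\frac12\int_M \tau(d\vp)\,\omega^n$. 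This last integral can be integrated by parts once more to move the derivative off $\vp$, yielding $\left|\int_M \omega^{n-1}\wedge\ddbar\vp\right| \le C\int_M(-\vp)\,\omega^n + C'$ with $C,C'$ depending only on $(M,\omega,J)$; alternatively one bounds it directly by $C\int_M|d\vp|\,\omega^n$.

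First I would fix, by Theorem \ref{gaud}, the conformal factor $u$ so that $\hat\omega := e^u\omega$ is Gauduchon, i.e.\ $d(Jd(\hat\omega^{n-1}))=0$. The advantage is that with respect to $\hat\omega^{n-1}$ integration by parts against $\ddbar\vp$ produces genuinely no boundary/torsion error: $\int_M \hat\omega^{n-1}\wedge\ddbar\vp = \frac12\int_M \hat\omega^{n-1}\wedge d(Jd\vp) = -\frac12\int_M d(\hat\omega^{n-1})\wedge Jd\vp = \frac12\int_M \vp\, d(Jd(\hat\omega^{n-1})) = 0$ after two integrations by parts (using $d(\hat\omega^{n-1})\wedge Jd\vp = $ an exact term up to the Gauduchon condition; the precise manipulation is routine). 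From $\omega+\ddbar\vp>0$ and $\hat\omega = e^u\omega$ comparable to $\omega$, we get $\int_M (-\vp)\,\hat\omega^{n-1}\wedge(\text{something positive}) \le$ a controlled quantity. More directly: since $\omega + \ddbar\vp > 0$ we have $n\,\omega^{n-1}\wedge(\omega+\ddbar\vp) > 0$ pointwise, hence $\int_M \omega^{n-1}\wedge\ddbar\vp \ge -\int_M\omega^n = -\vol$, and pairing against $\hat\omega^{n-1}$ instead gives the vanishing above; combining these two facts with the comparison $c_0\,\omega\le\hat\omega\le c_0^{-1}\omega$ bounds $\int_M\omega^{n-1}\wedge\ddbar\vp$ from \emph{both} sides by constants depending only on $(M,\omega,J)$.

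With $\Theta := \int_M (\Delta^C\vp)\,\omega^n = n\int_M\omega^{n-1}\wedge\ddbar\vp$ now known to satisfy $|\Theta|\le C_1$, I would invoke the Green's function for $\Delta^C$ (equivalently, solve the Poisson equation of Theorem \ref{solvepoi} with right-hand side $\Delta^C\vp - \frac{\Theta}{\vol}$, which is admissible since its integral against $e^{(n-1)u}\omega^n$ can be arranged, or simply use a standard Green's representation $\vp(x) = \frac{1}{\vol}\int_M\vp\,\omega^n + \int_M G(x,y)(\Delta^C\vp)(y)\,\omega^n(y)$ valid because $\Delta^C$ is elliptic with one-dimensional kernel). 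Taking the supremum over $x$, using $\sup_M\vp = 0$ and the fact that $G(x,y)\ge -C_2$ is bounded below uniformly (Green's function for a second-order elliptic operator on a compact manifold), we obtain $0 = \sup_M\vp \le \frac{1}{\vol}\int_M\vp\,\omega^n + C_2\!\int_M|\Delta^C\vp|\,\omega^n$; since $\Delta^C\vp \ge -n\,\frac{\omega^{n-1}\wedge\omega}{\omega^n} = -n$ (from $\omega+\ddbar\vp>0$) and $\int_M\Delta^C\vp\,\omega^n = \Theta$ is bounded, $\int_M|\Delta^C\vp|\,\omega^n = 2\int_M(\Delta^C\vp)^-\,\omega^n + \Theta \le 2n\,\vol + C_1$ is bounded, whence $\frac{1}{\vol}\int_M(-\vp)\,\omega^n \le C_2(2n\,\vol + C_1)$, which is \eqref{l1}.

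The main obstacle is the first step: verifying that the almost-complex/non-Gauduchon errors in $\int_M\omega^{n-1}\wedge\ddbar\vp$ are genuinely controlled — i.e.\ making rigorous the two integrations by parts in the presence of the Nijenhuis torsion terms $T,\overline T$ and establishing the two-sided bound on $\Theta$. The cleanest route is to pass to the Gauduchon conformal representative, so that one integration by parts gives exact vanishing against $\hat\omega^{n-1}$, and then transfer back via the bounded conformal factor; this keeps all constants dependent only on $(M,\omega,J)$, as required. Everything after that (the Green's function estimate) is insensitive to integrability of $J$ and proceeds exactly as in the integrable case.
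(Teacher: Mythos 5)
Your proposal follows essentially the same route as the paper: pass to the Gauduchon conformal representative via Theorem \ref{gaud}, use a Green's function representation for the canonical Laplacian, and exploit the pointwise bound $\Delta^C\vp\ge -n$ coming from $\omega+\ddbar\vp>0$. The only organizational difference is that the paper applies the Green representation directly to the canonical Laplacian $\Delta'^C$ of the Gauduchon metric $\omega'=e^u\omega$, for which $\int_M\Delta'^C\vp\,\omega'^n=0$ exactly (this is \eqref{ibp}); that exact vanishing is what allows one to add a constant to $G$ to make it nonnegative while preserving the representation formula, after which the estimate is immediate. You instead keep the Green's function of $\Delta^C$ for $\omega$ itself and separately establish a two-sided bound on $\Theta=\int_M(\Delta^C\vp)\,\omega^n$ by comparison with the Gauduchon representative; that transfer is correct (the conformal factor $e^{(n-1)u}$ is bounded between positive constants), so the extra step is harmless, just slightly longer than the paper's.

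One step as written does not hold and needs repair: the inequality
\[
0=\sup_M\vp\le \frac{1}{\int_M\omega^n}\int_M\vp\,\omega^n+C_2\int_M|\Delta^C\vp|\,\omega^n
\]
cannot be deduced from a one-sided bound on $G$, because $G(x_0,\cdot)$ is unbounded (it has the usual singularity along the diagonal), so $\bigl|\int_M G(x_0,y)\,\Delta^C\vp(y)\,\omega^n(y)\bigr|$ is not controlled by $\sup|G|\cdot\|\Delta^C\vp\|_{L^1}$; moreover your conventions are inconsistent (with the plus sign in your representation formula you need $G$ bounded \emph{above}, not below). The correct assembly uses exactly the ingredients you already have, namely the one-sided bound on $G$, the uniform bound $\|G(x_0,\cdot)\|_{L^1}\le C$, the pointwise bound $\Delta^C\vp\ge -n$, and the upper bound on $\Theta$. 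For instance, with the paper's convention $\vp(x)=\frac{1}{\int_M\omega^n}\int_M\vp\,\omega^n-\int_M G(x,y)\Delta^C\vp(y)\,\omega^n(y)$ and $G\ge -C_2$, evaluating at the maximum point $x_0$ where $\vp(x_0)=0$ gives
\[
\frac{1}{\int_M\omega^n}\int_M(-\vp)\,\omega^n=-\int_M \bigl(G(x_0,y)+C_2\bigr)\Delta^C\vp(y)\,\omega^n(y)+C_2\,\Theta\le n\int_M \bigl(G(x_0,y)+C_2\bigr)\,\omega^n(y)+C_2\,\Theta\le C,
\]
since $G+C_2\ge 0$ and $\Delta^C\vp\ge -n$. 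With this fix the argument is complete and equivalent to the paper's.
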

\begin{proof}
Applying Theorem \ref{gaud} we obtain a smooth function $u$ such that the almost Hermitian metric $\omega'=e^u \omega$ is Gauduchon. Let $\Delta'^C$ be the canonical Laplacian of $\omega'$, which is an elliptic second order differential operator with the kernel consisting of just constants. Standard linear PDE theory (see e.g. \cite[Appendix A]{AS}) shows that there exists a Green function $G$ for $\Delta'^C$ which satisfies
$G(x,y)\geq -C, \|G(x,\cdot)\|_{L^1(M,g)}\leq C,$ for a constant $C>0$, and
$$\vp(x)=\frac{1}{\int_M \omega'^n}\int_M \vp\omega'^n -\int_M \Delta'^C \vp(y) G(x,y)\omega'^n(y),$$
for all smooth functions $\vp$ and all $x\in M$. On the other hand we have
\begin{equation} \label{ibp}
\int_M \Delta'^C \vp\omega'^n=\frac{n}{2}\int_M \omega'^{n-1}\wedge d(Jd\vp)= \frac{n}{2}\int_M (d(Jd\omega'^{n-1})) \varphi=0,
\end{equation}
since $\omega'$ is Gauduchon.  For the ``integration by parts'' step above, we have used the elementary pointwise equality
$$\alpha \wedge (J \beta) = (-1)^p (J \alpha) \wedge \beta,$$
which holds for any $(2n-p)$-form $\alpha$ and $p$-form $\beta$.

Therefore, we are free to add a large uniform constant to $G(x,y)$ to make it nonnegative, while preserving the same Green formula.
If $\vp$ satisfies \eqref{norm} then we have
$$\Delta'^C\vp=e^{-u}\Delta^C\vp\geq -ne^{-u},$$
and so we immediately deduce that $\int_M (-\vp)\omega'^n\leq C,$ from which \eqref{l1} follows.
\end{proof}

We end this section with a remark about the use of the maximum principle.  Consider a function $f \in C^2(M)$.  Then
\begin{equation} \label{localmax}
\ddbar f (x_0) \le  0 \quad \textrm{if $f$ has a local maximum at $x_0$},
\end{equation}
and the reverse inequality holds at a local minimum.  Indeed, the only difference from the integrable case is a first order term which vanishes at $x_0$.

\section{Zero order estimate} \label{sectionzeroorder}

Here we follow the argument given in \cite[Proposition 10]{S} when $J$ is integrable, which is a modification and improvement of an earlier argument by B\l ocki \cite{Bl,Bl2}.

\begin{proposition} \label{uniform estimate}
Let $\varphi$ solve the Monge-Amp\`ere equation (\ref{ma2}).  Then there exists a constant $C$, depending only on $(M,\omega, J)$ and bounds for $F$ such that
$$|\varphi | \le C.$$
\end{proposition}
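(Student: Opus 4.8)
The plan is to adapt Sz\'ekelyhidi's proof of the zero order estimate \cite{S} (building on B\l ocki \cite{Bl,Bl2}), which is based on a local application of the Alexandrov--Bakelman--Pucci (ABP) maximum principle, and to check that the extra terms coming from the non-integrability of $J$ do not destroy the argument. Write $L = \inf_M \vp$, which is attained at some point $x_0$; it suffices to bound $-L$ from above, since $\sup_M\vp = 0$. First I would choose a coordinate chart centered at $x_0$ in which $\vp$ has its minimum, say on a ball $B_1 = B_1(0)$ of fixed radius, and consider the function $v = \vp + \ve|x|^2$ for a small fixed $\ve>0$; the point of adding $\ve|x|^2$ is that the minimum of $v$ over $B_1$ is still attained in the interior (at a point close to $x_0$) provided $-L$ is large, and that $D^2 v \geq 2\ve\,\mathrm{Id}$ wherever we will work.

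Next I would apply the ABP estimate to $v$ on $B_1$: there is a dimensional constant $c_0>0$ such that
\begin{equation}\label{abpplan}
c_0\,\ve^{2n} \leq \int_{P} \det(D^2 v),
\end{equation}
where $P\subset B_1$ is the contact set where $v$ lies below its supporting affine functions with slopes in a ball of radius $c\ve$; on $P$ one has $D^2 v\geq 0$. The key estimate to establish is a pointwise bound, at every point of $P$, of the form $\det(D^2 v) \leq C\, e^{-F}$ (equivalently $\det(D^2\vp)\le Ce^{-F}$ up to harmless factors), coming from the Monge--Amp\`ere equation \eqref{ma2}. This is where the non-integrability enters: in the integrable case one uses that at a point where the real Hessian $D^2\vp$ is nonnegative, the complex Hessian $\vp_{i\bar j}$ is also controlled, and then $\det(\omega+\mn\de\db\vp) = e^F\det\omega$ forces $\det(D^2\vp)$ to be small. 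Here, by \eqref{hess}, the $(1,1)$-form $\mn\de\db\vp$ at a point equals the complex Hessian of $\vp$ (the second derivatives of $\vp$ in the chart) \emph{plus} a first-order term $-[V,\ov W]^{(0,1)}(\vp)$ involving the Nijenhuis tensor; on the contact set the gradient of $\vp$ is bounded by $c\ve + \ve\,\mathrm{diam}(B_1)$, which is small, so this first-order term is $O(\ve)$ and can be absorbed. Thus I would show that on $P$, after choosing $\ve$ small, $\omega+\mn\de\db\vp$ controls a uniformly positive multiple of the Euclidean complex Hessian plus $D^2(\ve|x|^2)$, and hence $0\le D^2 v$ together with the equation yields $\det(D^2 v)\leq C\, e^{-F} \leq C'$.

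Plugging this bound into \eqref{abpplan} gives $c_0\ve^{2n} \leq C'\,|P| \leq C'\,|B_1|$, which is not yet what we want; the actual conclusion comes from running the same argument with $\ve$ replaced by a parameter tied to $-L$: more precisely, following \cite{S}, one works on a ball of radius comparable to $1$ and takes the supporting-slope ball of radius $\sim (-L)/\mathrm{diam}$, so that the left-hand side of ABP becomes $\sim (-L)^{2n}$ while the right-hand side is still $\leq C'|B_1|$, forcing $-L \leq C$. Feeding $|\varphi|\le C$ back is then immediate from $\sup_M\vp=0$. The main obstacle I anticipate is precisely the bookkeeping in the second paragraph: verifying that on the contact set the gradient of $\vp$ is genuinely small (so that the Nijenhuis first-order term in \eqref{hess} is negligible) and that the comparison between the real Hessian $D^2\vp$ and the complex Hessian $\vp_{i\bar j}$ used to pass from $\det(\omega+\mn\de\db\vp)=e^F\omega^n$ to an upper bound on $\det(D^2\vp)$ survives with the extra terms; everything else is a routine transcription of \cite[Proposition 10]{S}, using \eqref{localmax} in place of the integrable maximum principle.
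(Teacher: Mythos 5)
Your setup and the middle of the argument match the paper's proof: you work at the infimum, add $\ve|x|^2$, pass to the contact set $P$ where $D^2v\ge 0$ and $|D\vp|=O(\ve)$, and use the smallness of the gradient there to absorb the first-order Nijenhuis term in \eqref{hess} and deduce $\det(D^2v)\le C$ from the equation. That part is exactly right and is how the paper handles the non-integrability.

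The final step, however, has a genuine gap. You propose to close the argument by rerunning ABP with supporting slopes of size $\sim(-L)$ so that the left-hand side becomes $(-L)^{2n}$. This does not work, for two reasons. First, the ABP left-hand side is controlled by the drop of $v$ from $\partial B_1$ to its interior minimum, and here that drop is only guaranteed to be $\ge\ve$ (coming from the added $\ve|x|^2$): there is no control of $\vp$ on $\partial B_1$ beyond $\vp\le 0$, so $\vp$ may be nearly as negative on the boundary of the chart as at the center, and the slopes available in the contact set are $O(\ve)$, not $O(-L)$. Second, even if one could use slopes of size $(-L)$, the gradient of $\vp$ on the corresponding contact set would then be of size $(-L)$, which destroys precisely the bound on the error term $E(v)$ (and hence on $\det(D^2v)$) that you correctly identified as the crux of handling non-integrability. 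The paper closes the argument differently: from $\ve^{2n}\le C|P|$ one only gets that $P$ has measure bounded below; the bound on $I=\inf_M\vp$ then comes from combining this with the fact that $v\le I+\ve/2$ on $P$ and with the uniform $L^1$ estimate $\int_M(-\vp)\,\omega^n\le C$ of Proposition \ref{elle1}, which gives $|I+\ve/2|\cdot|P|\le\int_P(-v)\le C$ and hence $|I|\le C\ve^{-2n}+\ve/2$. That $L^1$ estimate is a separate, essential ingredient (proved via the Gauduchon conformal factor and the Green's function of the canonical Laplacian, since $\Delta^C\vp\ge -n$), and it is entirely missing from your proposal.
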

\begin{proof}
Since $\sup_M\vp=0$, it suffices to derive a uniform lower bound for $I:=\inf_M\vp$. Let $p\in M$ be a point where this infimum is achieved, and choose local coordinates $\{x^1,\dots,x^{2n}\}$ centered at $p$ defined on an open set containing the unit ball $B\subset \mathbb{R}^{2n}$ in its interior. On $B$ let
\begin{equation}\label{quadr}
v(x)=\vp(x)+\ve \sum_{i=1}^{2n}(x^i)^2,
\end{equation}
where $\ve>0$ will be chosen later. Then
$$v(0)=I=\inf_{B}v,$$
$$\inf_{\de B}v\geq v(0)+\ve.$$
We define a set  $P$ by
$$P = \{  x\in B \ | \  |Dv(x)|<\ve/2, \textrm{ and }
v(y)\geq v(x)+Dv(x)\cdot(y-x), \ \forall y \in B\}.$$
Note that $0\in P$, and that $D^2v(x)\geq 0$ as well as $|D\vp(x)|\leq \frac{5}{2}\ve$ for all $x\in P$. Now at any $x\in B$ the symmetric bilinear form $H(v)(X,Y):=(\ddbar v)(X,JY)$ is equal to
\begin{equation}\label{Jhess}
H(v) = \frac{1}{2} (D^2 v)^J + E(v),
\end{equation}
where $(D^2 v)^J$ is the
$J$-invariant part of $D^2v(x)$,
$$(D^2v)^J(x):=\frac{1}{2}(D^2v+J^T\cdot D^2v\cdot J)(x),$$
and $E(v)$ is an error matrix which depends linearly on $Dv(x)$ (see e.g. \cite[p.443]{TWWY}). Therefore, using the fact that $\det (A+B) \ge \det A + \det B$ for symmetric nonnegative definite matrices $A, B$, we have
for all $x\in P$,
\begin{equation}\label{ratta}
\det(D^2v)(x)\leq 2^{2n-1}\det((D^2v)^J)(x)=2^{4n-1}\det(H(v)-E(v))(x).
\end{equation}
(or, one can argue as in \cite{Bl}).  Moreover,
$$(D^2\vp)^J(x) \ge (D^2v)^J(x)-C\ve\ \mathrm{Id}\geq -C\ve\ \mathrm{Id},$$
and using that $|D\vp(x)|\leq 5\ve/2$, together with \eqref{Jhess}, we obtain
$$H(\vp)(x)\geq -C\ve\ \mathrm{Id},$$
for a uniform constant $C$.
Therefore at any $x\in P$ we have
$$\omega+\ddbar\vp\geq\frac{1}{2}\omega,$$
provided $\ve$ is sufficiently small (this fixes the value of $\ve$), but the Monge-Amp\`ere equation \eqref{ma2} then gives us
$$\omega+\ddbar\vp\leq C\omega.$$
From this, using again \eqref{quadr} and $|Dv(x)|<\ve/2$, we conclude that for all $x\in P$ we have
$$0\leq (H(v)-E(v))(x)\leq C\ \mathrm{Id},$$
and \eqref{ratta} gives
$$\det(D^2v)(x)\leq C.$$
Applying the modified Alexandrov-Bakelman-Pucci maximum principle in \cite[Proposition 11]{S} we obtain
$$\ve^{2n}\leq C_{2n}\int_P \det(D^2 v),$$
for a constant $C_{2n}$ which depends only on the dimension of $M$,
and so
$$\ve^{2n}\leq C|P|,$$
where $|P|$ denotes the Lebesgue measure of $P$. But for any $x\in P$ we also have
$$v(x)\leq v(0)+\frac{\ve}{2}=I+\frac{\ve}{2},$$
and we may assume that $I+\frac{\ve}{2}<0$, so
$$|P|\leq \frac{\int_P (-v)}{\left|I+\frac{\ve}{2}\right|}\leq \frac{C}{\left|I+\frac{\ve}{2}\right|},$$
using Proposition \ref{elle1}, and so
$$\left|I+\frac{\ve}{2}\right|\leq \frac{C}{\ve^{2n}},$$
which gives us the desired uniform lower bound for $I$.
\end{proof}

We remark that it is also possible to prove the zero order estimate in our case by the method of Moser iteration \cite{Y1}, much like in \cite{TW1} (where $J$ is integrable), but the calculations are longer. The Moser iteration method was also used by Delan\"oe \cite{De2} for the equation suggested by Gromov (as discussed in the introduction), but in that case the argument is more similar to that of the K\"ahler case.

\section{First order estimate} \label{sectionfirstorder}

In this section, we prove a first order \emph{a priori} estimate for $\varphi$, which uses the zero order estimate of Section \ref{sectionzeroorder}.  This part of the argument is similar to \cite[Lemma 3.3]{P}, except that here we replace Pli\'s's auxiliary plurisubharmonic function by a barrier function involving the solution $\varphi$.

\begin{proposition} \label{gradient estimate}
Let $\varphi$ solve the Monge-Amp\`ere equation (\ref{ma2}).  Then there exists a constant $C$, depending only on $(M,g, J)$ and bounds for $F$ such that
$$| \partial \varphi |_g \le C.$$
\end{proposition}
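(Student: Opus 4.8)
The plan is to apply the maximum principle to the quantity
$$Q=\log|\partial\varphi|_g^2+h(\varphi),$$
where $h$ is a suitable smooth function of one real variable, playing the role of Pli\'s's auxiliary global $J$-plurisubharmonic function; its existence uses the zero order bound of Proposition \ref{uniform estimate}, and concretely one may take $h(t)=-At$ (possibly with a lower-order correction term), with $A$ large and fixed at the end. First I would record two consequences of \eqref{ma2}. Writing $\ti\omega=\omega+\ddbar\varphi>0$ and $\ti\Delta^C$ for its canonical Laplacian, one has $\ti\Delta^C\varphi=n-\mathrm{tr}_{\ti g}\omega$ exactly, and, differentiating the equation once in a local unitary frame $e_1,\dots,e_n$ for $\omega$, also $\ti g^{i\bar j}e_k(\de\db\varphi)_{i\bar j}=e_kF$. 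If $Q$ attains its maximum at $x_0$ and $|\partial\varphi|(x_0)$ is not already under control, I would choose the frame so that $\ti g_{i\bar j}(x_0)$ is diagonal; then $dQ(x_0)=0$, and by \eqref{localmax} $\ddbar Q(x_0)\le0$ as a real $(1,1)$ form, hence $\ti\Delta^CQ(x_0)\le0$.

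Next I would expand $\ti\Delta^CQ$ at $x_0$. Differentiating $|\partial\varphi|_g^2=\sum_k\varphi_k\varphi_{\bar k}$ twice produces three kinds of terms: nonnegative ``good'' terms of the form $\sum_{i,k}\ti g^{i\bar i}\big(|e_ie_k\varphi|^2+|e_{\bar i}e_k\varphi|^2\big)$; linear third-order terms which, after commuting $e_k$ to the front and inserting the differentiated equation, collapse to $2\,\mathrm{Re}\sum_k(e_kF)\varphi_{\bar k}$ plus lower-order contributions; and --- this is the genuinely new feature relative to the work of Pli\'s and to the integrable case --- a collection of torsion terms, arising from the correction term in \eqref{hess} and from the commutators $[e_i,e_{\bar j}]$ and $[e_i,e_k]$ of the frame vector fields, which are first order in $\varphi$ with coefficients built from the Nijenhuis tensor and its derivatives. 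Using $dQ(x_0)=0$ to rewrite every first-order quantity as a multiple of $\partial\varphi$, together with $\ti\Delta^C(h(\varphi))=h'\,(n-\mathrm{tr}_{\ti g}\omega)+h''\,\ti g^{i\bar j}\varphi_i\varphi_{\bar j}$, I would assemble the inequality $0\ge\ti\Delta^CQ$. A key auxiliary observation, again from $dQ(x_0)=0$, is that the good terms are themselves bounded below by a multiple of $(h')^2\,\ti g^{i\bar j}\varphi_i\varphi_{\bar j}$; this lets them absorb both the term $-|d|\partial\varphi|_g^2|^2_{\ti g}/|\partial\varphi|_g^4$ coming from the logarithm and the Cauchy--Schwarz cross terms produced by the torsion.

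Choosing the constants in $h$ appropriately, the assembled inequality should force, first, that $\mathrm{tr}_{\ti g}\omega(x_0)$ is bounded --- so that, together with $\mathrm{tr}_{\ti g}\omega\ge ne^{-F/n}$ and the upper bound on $\det\ti g$ from \eqref{ma2}, $\ti\omega$ is uniformly equivalent to $\omega$ at $x_0$ --- and, feeding this back into the lower bound for the good terms, that $|\partial\varphi|_g^2(x_0)$ is bounded. Since $h(\varphi)$ is uniformly bounded by Proposition \ref{uniform estimate}, the resulting bound on $Q(x_0)=\max_M Q$ yields $|\partial\varphi|_g\le C$ on all of $M$.

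The step I expect to be the main obstacle is the uniform control of the torsion terms. Unlike in the integrable case, commuting frame derivatives and applying \eqref{hess} generates first-order terms weighted by $\ti g^{-1}$, hence a priori of size $\mathrm{tr}_{\ti g}\omega\cdot|\partial\varphi|_g$ (or worse after Cauchy--Schwarz), and one must show that these genuinely land in the span of the good second-order terms plus harmless multiples of $\mathrm{tr}_{\ti g}\omega$ and of $|\partial\varphi|_g^2$, rather than overwhelming the barrier. This is precisely the point at which building the barrier out of $\varphi$ itself --- rather than using an external plurisubharmonic function as in the work of Pli\'s --- has to be exploited.
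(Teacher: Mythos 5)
Your overall strategy is the paper's: a maximum principle applied to $\log|\de\vp|^2_g$ plus a barrier built from $\vp$ itself, computed in a local $g$-unitary frame with $\ti{g}$ diagonalized, with the linear third-order terms handled by differentiating the equation and the torsion commutators absorbed by Young's inequality into the good squares $\sum_k\ti{g}^{i\ov{i}}(|e_ie_k(\vp)|^2+|e_i\ov{e}_k(\vp)|^2)$. The endgame (bound $\sum_i\ti{g}^{i\ov{i}}$ and $|\de\vp|^2_{\ti{g}}$ at $x_0$, use $\det\ti{g}=e^F$ to get $\ti{g}\le Cg$, then $|\de\vp|^2_g\le C|\de\vp|^2_{\ti{g}}$) is also exactly what the paper does.

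However, there is a genuine gap at the decisive quantitative step, and it sits precisely where you place the weight of the argument. Write $u=|\de\vp|^2_g$ and use criticality $e_i(u)=-h'u\,e_i(\vp)$ as you propose; then the negative term from the logarithm is exactly $-(h')^2|\de\vp|^2_{\ti{g}}$. Your claim is that the good squares absorb this. But the only way to convert the good squares into a multiple of $(h')^2|\de\vp|^2_{\ti{g}}$ is the reverse Cauchy--Schwarz $|e_i(u)|^2\le 2u\sum_k(|e_ie_k(\vp)|^2+|e_i\ov{e}_k(\vp)|^2)$, which yields at best $\tfrac{1}{2}(h')^2|\de\vp|^2_{\ti{g}}$, leaving a deficit of order $\tfrac12(h')^2|\de\vp|^2_{\ti{g}}$ that only $h''|\de\vp|^2_{\ti{g}}$ can cover. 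So you need $h''\ge\tfrac12(h')^2$ pointwise; simultaneously, absorbing the torsion terms $C|\de\vp|_g\sum_k\ti{g}^{i\ov{i}}(|e_ie_k(\vp)|+|\ov{e}_ie_k(\vp)|)$ costs $C\ve^{-1}u\sum_i\ti{g}^{i\ov{i}}$, and the only positive term proportional to $\sum_i\ti{g}^{i\ov{i}}$ is $-h'\sum_i\ti{g}^{i\ov{i}}$, forcing $-h'\ge C_*\ve^{-1}$ for a constant $C_*$ determined by the Nijenhuis tensor. Setting $v=-1/h'$, the condition $h''\ge\tfrac12(h')^2$ gives $v'\ge\tfrac12$, so $v$ increases by at least $\tfrac12\,\mathrm{osc}(\vp)$ over the range of $\vp$, while $0<v\le 1/(C_*\ve^{-1})$; this forces $C_*\ve^{-1}\,\mathrm{osc}(\vp)<2$, which fails for a general almost Hermitian manifold. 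In particular $h(t)=-At$ (with $h''=0$) cannot work, and no correction fixes it within this scheme. The paper escapes this trap by \emph{not} evaluating the full cross term via criticality: it splits $\ov{e}_i(u)=\sum_k(\vp_{\ov{k}}\ov{e}_ie_k(\vp)+\vp_k\ov{e}_i\ov{e}_k(\vp))$, rewrites the first half exactly as $\ti{g}_{k\ov{i}}-g_{k\ov{i}}$ plus first-order terms --- so that half contributes the \emph{positive} quantity $-2f'|\de\vp|^2_{\ti{g}}$ rather than costing $(f')^2$ --- and only Cauchy--Schwarzes the $\ov{e}_i\ov{e}_k(\vp)$ half, at cost $(1+2\ve)(f')^2|\de\vp|^2_{\ti{g}}$. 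Combined with the free $+(f')^2|\de\vp|^2_{\ti{g}}$ coming from $L(e^f)$, the net requirement becomes $f''\ge 3\ve(f')^2$, which is satisfied by the exponential barrier $f=A^{-1}e^{-A(\vp-1)}$ together with the $\vp(x_0)$-dependent choice $\ve=\tfrac{A}{6}e^{A(\vp(x_0)-1)}$. Without this splitting and this specific barrier, the inequality does not close.
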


\begin{proof}
We will prove this estimate by applying the maximum principle to the quantity $Q = e^{f(\varphi)} | \partial \varphi|^2_g$, for a function $f=f(\varphi)$ to be determined later.  We will show that at the maximum point of $Q$, $| \partial \varphi|_g$ is uniformly bounded from above.

First we discuss coordinates.
Let $\{ e_1, \ldots, e_n \}$ be a local frame for $T^{(1,0)}M$ and let $\{ \theta^1, \ldots, \theta^n \}$ be a dual coframe.  We write $g_{i\ov{j}}= g(e_i, \ov{e}_j)$.
The $(1,1)$ form $\omega$ is given by
$$\omega = \sqrt{-1} g_{i\ov{j}} \theta^i \wedge \ov{\theta}^j,$$
where here and henceforth, as should be clear from the context, we are summing over repeated indices (on occasion, for clarity, we will include the summation).
We define
$$\tilde{\omega}  = \omega + \sqrt{-1} \partial \ov{\partial} \varphi,$$
and write $\tilde{g}_{i\ov{j}}$ for the associated metric, defined by  $\tilde{\omega} = \sqrt{-1} \tilde{g}_{i\ov{j}} \theta^i \wedge \ov{\theta}^j$.
Equation \eqref{hess} immediately implies that
$$\sqrt{-1} \partial \bar \partial \varphi = \sqrt{-1}( e_i \ov{e}_j (\varphi) - [e_i, \ov{e}_j]^{(0,1)} (\varphi)) \theta^i \wedge \ov{\theta}^j,$$
and hence
\begin{equation} \label{tildeg}
\tilde{g}_{i\ov{j}} = g_{i\ov{j}} + e_i \ov{e}_j (\varphi) - [e_i, \ov{e}_j]^{(0,1)} (\varphi).
\end{equation}
We now assume for the rest of this section that our local frame $\{ e_1, \ldots, e_n \}$ is unitary with respect to $g$, so that $g_{i\ov{j}}=\delta_{ij}$.
Therefore in these coordinates, $|\partial \varphi|^2_g = \sum_k \varphi_k \varphi_{\ov{k}}$, where we are writing $\varphi_k = e_k(\varphi)$ and $\varphi_{\ov{k}} = \ov{e}_k (\varphi)$ .

Fix a point $x_0$ at which $Q$ achieves its maximum on $M$.  Then, after making a unitary transformation, we may and do assume that $\tilde{g}_{i\ov{j}}$ is diagonal at $x_0$.

Define a second order elliptic operator
$$L := \tilde{g}^{i\ov{j}} (e_i \ov{e}_j - [e_i, \ov{e}_j]^{(0,1)}).$$
Compute at $x_0$, using (\ref{localmax}),
\begin{equation} \label{LQ}
\begin{split}
0 \ge {} &  L(Q)  \\
= {} & \tilde{g}^{i\ov{i}} \big\{ e_i \ov{e}_i (e^{f} | \partial \varphi|^2_g) - [e_i, \ov{e}_i]^{(0,1)} (e^{f} | \partial \varphi|^2_g ) \big\} \\
= {} & \tilde{g}^{i\ov{i}} \big\{ | \partial \varphi|^2_g e_i \ov{e}_i (e^f) + e^f e_i\ov{e}_i (| \partial \varphi|^2_g) + 2 \textrm{Re} (e_i (e^f) \ov{e}_i ( | \partial \varphi|^2_g)) \\
{} & -  | \partial \varphi|^2_g [e_i, \ov{e}_i]^{(0,1)} (e^f) - e^f [e_i, \ov{e}_i ]^{(0,1)} ( | \partial \varphi|^2_g) \big\} \\
= {} & | \partial \varphi|^2_g L(e^f) + e^f L( | \partial \varphi|^2_g) + 2 \textrm{Re} ( \tilde{g}^{i\ov{i}} e_i (e^f) \ov{e}_i (| \partial \varphi|^2_g)).
\end{split}
\end{equation}
We now compute each of these three terms in turn.  First
\begin{equation} \label{Lef}
\begin{split}
L(e^f) = {} & \tilde{g}^{i\ov{i}} \big\{ e_i \ov{e}_i (e^f) - [e_i, \ov{e}_i]^{(0,1)} (e^f) \big\} \\
= {} & \tilde{g}^{i\ov{i}} \big\{ e^f (f')^2 | \varphi_i|^2 + e^f f'' | \varphi_i|^2 + e^f f' e_i \ov{e}_i (\varphi) - e^f f' [e_i, \ov{e}_i]^{(0,1)} (\varphi) \big\} \\
= {} & e^f ( (f')^2 + f'') |\partial \varphi |^2_{\tilde{g}} + n e^f f' - e^f f' \sum_i \tilde{g}^{i\ov{i}},
\end{split}
\end{equation}
where we have used (\ref{tildeg}) for the last line.

Next,
\begin{equation} \label{dps}
\begin{split}
L(| \partial \varphi|^2_g) = {} & \sum_k \tilde{g}^{i\ov{i}} \big\{ e_i \ov{e}_i (\varphi_k \varphi_{\ov{k}} ) - [e_i, \ov{e}_i]^{(0,1)} (\varphi_k \varphi_{\ov{k}}) \big\} \\
= {} & \sum_k \tilde{g}^{i\ov{i}} \big\{ |e_i e_k (\varphi)|^2 + |e_i \ov{e}_k(\varphi)|^2 + \varphi_k e_i \ov{e}_i \ov{e}_k (\varphi) - \varphi_k [e_i, \ov{e}_i]^{(0,1)}\ov{e}_k (\varphi)    \\
 {} & +  \varphi_{\ov{k}}e_i \ov{e}_i e_k (\varphi) - \varphi_{\ov{k}} [e_i, \ov{e}_i]^{(0,1)} e_k (\varphi) \big\}.
\end{split}
\end{equation}
To deal with the terms involving three derivatives of $\varphi$, we use the equation (\ref{ma2}), which we can rewrite in our coordinates as
$$\log \det (\tilde{g}_{i\ov{j}}) = F.$$
Applying $e_k$, we obtain $\tilde{g}^{i\ov{j}} e_k (\tilde{g}_{i\ov{j}}) = F_k$, which at the point $x_0$ gives us
\begin{equation}
\begin{split}
\tilde{g}^{i\ov{i}} (e_k e_i \ov{e}_i (\varphi) - e_k [e_i, \ov{e}_i]^{(0,1)}(\varphi)) = {} & F_k \\
\tilde{g}^{i\ov{i}} (\ov{e}_k e_i \ov{e}_i (\varphi) - \ov{e}_k [e_i, \ov{e}_i]^{(0,1)}(\varphi)) = {} & F_{\ov{k}}.
\end{split}
\end{equation}
Hence
\begin{equation}
\begin{split}
\lefteqn{ \sum_k \tilde{g}^{i\ov{i}} \{ \varphi_k e_i \ov{e}_i \ov{e}_k (\varphi) - \varphi_k [e_i, \ov{e}_i]^{(0,1)}\ov{e}_k (\varphi) \}  } \\
= {} &\sum_k \tilde{g}^{i\ov{i}} \varphi_k \big\{ e_i \ov{e}_k \ov{e}_i(\varphi) - e_i [\ov{e}_k, \ov{e}_i] (\varphi) - \ov{e}_k [e_i, \ov{e}_i]^{(0,1)}(\varphi) + [\ov{e}_k, [e_i, \ov{e}_i]^{(0,1)}](\varphi)     \big\} \\
= {} & \sum_k \tilde{g}^{i\ov{i}} \varphi_k \big\{ \ov{e}_k e_i \ov{e}_i (\varphi) - [\ov{e}_k, e_i] \ov{e}_i (\varphi) - e_i [\ov{e}_k, \ov{e}_i] (\varphi) - \ov{e}_k [e_i, \ov{e}_i]^{(0,1)}(\varphi) \\ {} & + [\ov{e}_k, [e_i, \ov{e}_i]^{(0,1)}] (\varphi) \big\} \\
= {} & \sum_k \varphi_k F_{\ov{k}} + \sum_k \tilde{g}^{i\ov{i}} \varphi_k \big\{ - \ov{e}_i[\ov{e}_k, e_i]  (\varphi) + [\ov{e}_i, [\ov{e}_k, e_i]] (\varphi) - e_i [\ov{e}_k, \ov{e}_i] (\varphi) \\ {} & + [\ov{e}_k, [e_i, \ov{e}_i]^{(0,1)}] (\varphi) \big\} \\
\ge {} & \sum_k \varphi_k F_{\ov{k}} - C | \partial \varphi|_g \sum_k \tilde{g}^{i\ov{i}} ( | \ov{e}_i e_k (\varphi)| + | e_i e_k (\varphi)| ) - C  | \partial \varphi|^2_g\sum_i \tilde{g}^{i\ov{i}},
\end{split}
\end{equation}
where for the last inequality, we have used the fact that  commutators of first order operators, such as $[\ov{e}_k, e_i]$, are first order operators.

Similarly,
\begin{equation}
\begin{split}
\lefteqn{ \sum_k \tilde{g}^{i\ov{i}} \{  \varphi_{\ov{k}}e_i \ov{e}_i e_k (\varphi) - \varphi_{\ov{k}} [e_i, \ov{e}_i]^{(0,1)} e_k (\varphi) \}  } \\
\ge {} & \sum_k \varphi_{\ov{k}} F_k - C | \partial \varphi|_g \sum_k \tilde{g}^{i\ov{i}} ( | \ov{e}_i e_k (\varphi)| + | e_i e_k (\varphi)| ) - C | \partial \varphi|^2_g \sum_i \tilde{g}^{i\ov{i}} .
\end{split}
\end{equation}

Using these last two inequalities in (\ref{dps}), and combining with Young's inequality, we obtain for  $\ve \in (0,1/2]$ (to be determined later),
\begin{equation} \label{dps2}
\begin{split}
L(| \partial \varphi|^2_g)  \ge {} & (1-\ve) \sum_k \tilde{g}^{i\ov{i}} ( |e_i e_k (\varphi)|^2 + |e_i \ov{e}_k (\varphi)|^2 ) \\ {} & -  C \ve^{-1} |\partial \varphi|_g^2 \sum_i \tilde{g}^{i\ov{i}} + 2 \textrm{Re} \left(\sum_k\varphi_k F_{\ov{k}}\right).
\end{split}
\end{equation}

We now deal with the third term on the right hand side of (\ref{LQ}),
\begin{equation} \label{realpart}
\begin{split}
\lefteqn{
2 \textrm{Re} ( \tilde{g}^{i\ov{i}} e_i (e^f) \ov{e}_i (| \partial \varphi|^2_g)) } \\= {} & 2 \textrm{Re} \left(\sum_k  \tilde{g}^{i\ov{i}}  e^f f' \varphi_i  \varphi_{\ov{k}} \ov{e}_i e_k(\varphi)\right) + 2 \textrm{Re} \left(\sum_k  \tilde{g}^{i\ov{i}}  e^f f' \varphi_i  \varphi_k \ov{e}_i \ov{e}_k(\varphi)\right).
\end{split}
\end{equation}
For the first of these terms, we have
\begin{equation} \label{realpart2}
\begin{split}
\lefteqn{
2 \textrm{Re} \left(\sum_k  \tilde{g}^{i\ov{i}}  e^f f' \varphi_i  \varphi_{\ov{k}} \ov{e}_i e_k(\varphi)\right) } \\
= {} & 2 \textrm{Re} \big\{ \sum_k\tilde{g}^{i\ov{i}} e^f f' \varphi_i \varphi_{\ov{k}} ( e_k \ov{e}_i (\varphi) - [e_k, \ov{e}_i]^{(0,1)} (\varphi) - [e_k, \ov{e}_i]^{(1,0)} (\varphi)) \big\} \\
\ge {} & 2 \textrm{Re} \big\{ \sum_k\tilde{g}^{i\ov{i}} e^f f' \varphi_i \varphi_{\ov{k}} (\tilde{g}_{k\bar{i}} - \delta_{ki}) \big\} -   Ce^f |f'| | \partial \varphi|^2_g  \sum_i \tilde{g}^{i\ov{i}}  | \varphi_i| \\
\ge {} & 2 e^f f' | \partial \varphi |^2_g - 2 e^f f' | \partial \varphi |^2_{\ti{g}} - \ve e^f (f')^2 |\partial \varphi|^2_g |\partial \varphi|^2_{\tilde{g}} - C \ve^{-1} e^f | \partial \varphi|^2_g \sum_i \tilde{g}^{i\ov{i}}.
\end{split}
\end{equation}
For the second term of (\ref{realpart}),
\begin{equation} \label{realpart3}
\begin{split}
\lefteqn{2 \textrm{Re} \left(\sum_k  \tilde{g}^{i\ov{i}}  e^f f' \varphi_i  \varphi_k \ov{e}_i \ov{e}_k(\varphi)\right) } \\
\ge {} & - (1-\ve) \sum_k\tilde{g}^{i\ov{i}} e^f |\ov{e}_i \ov{e}_k (\varphi)|^2 - (1+2\ve)e^f (f')^2 |\partial \varphi|^2_g | \partial \varphi|^2_{\tilde{g}},
\end{split}
\end{equation}
using the fact that for any real numbers $a$ and $b$, and $\ve \in (0,1/2]$, $$2ab \ge -(1-\ve)a^2 - (1+2\ve) b^2.$$
Combining (\ref{realpart}), (\ref{realpart2}), (\ref{realpart3}),
\begin{equation} \label{realpartfinal}
\begin{split}
\lefteqn{
2 \textrm{Re} ( \tilde{g}^{i\ov{i}} e_i (e^f) \ov{e}_i (| \partial \varphi|^2_g)) } \\ \ge {} & 2 e^f f' | \partial \varphi |^2_g - 2 e^f f' | \partial \varphi |^2_{\ti{g}} - (1+3\ve) e^f (f')^2 |\partial \varphi|^2_g | \partial \varphi|^2_{\tilde{g}} \\
&   - C \ve^{-1} e^f | \partial \varphi|^2_g \sum_i \tilde{g}^{i\ov{i}}- (1-\ve) \sum_k\tilde{g}^{i\ov{i}} e^f |\ov{e}_i \ov{e}_k (\varphi)|^2 .
\end{split}
\end{equation}

We now put together (\ref{LQ}), (\ref{Lef}), (\ref{dps2}), (\ref{realpartfinal}) to obtain, at $x_0$,
\begin{equation} \label{big}
\begin{split}
0 \ge  {} & e^f (f'' - 3 \ve (f')^2) | \partial \varphi|^2_g |\partial \varphi|^2_{\tilde{g}} +  e^f (-f' - C_0\ve^{-1}) | \partial \varphi|^2_g \sum_i \tilde{g}^{i\ov{i}}\\
{} &  + 2e^f \textrm{Re}\left(\sum_k\varphi_k F_{\ov{k}}\right)  + (2+n) e^f f' | \partial \varphi|^2_g - 2e^f f' | \partial \varphi|^2_{\tilde{g}},
\end{split}
\end{equation}
for a uniform constant $C_0$.  We now choose the function $f=f(\varphi)$ and constant $\ve>0$ as follows.  Define
$$f(\varphi) = \frac{e^{-A(\varphi-1)}}{A}, \quad \ve = \frac{A e^{A(\varphi(x_0)-1)}}{6}.$$
for a large constant $A$ to be determined shortly.  Note that since $\sup_M \varphi=0$, the constant $\ve$ is small.  Compute, at $x_0$,
\[
\begin{split}
f'' - 3 \ve (f')^2 = {} & \frac{Ae^{-A(\varphi(x_0)-1)}}{2} \\
-f' - C_0 \ve^{-1} = {} & \left(1 - \frac{6C_0}{A}\right) e^{-A(\varphi(x_0)-1)}.
\end{split}
\]
In particular, note that $f'$ is \emph{negative}.
Choosing $A= 12C_0$ we obtain lower bounds, at $x_0$,
$$f'' - 3 \ve (f')^2 \ge C^{-1}, \quad -f' - C_0 \ve^{-1} \ge C^{-1},$$
for a uniform constant $C>0$.  In (\ref{big}), after dividing by $e^f$, and increasing $C$ if necessary, we obtain
\[
\begin{split}
0 \ge C^{-1} | \partial \varphi|^2_g |\partial \varphi|^2_{\tilde{g}} + C^{-1} | \partial \varphi|^2_g \sum_i \tilde{g}^{i\ov{i}} - C | \partial \varphi|^2_g - C.
\end{split}
\]
Dividing by $|\partial \varphi|^2_g$ (which we may assume is larger than 1, without loss of generality), we obtain uniform upper bounds for $\sum_i \tilde{g}^{i\ov{i}}$ and $|\partial \varphi|^2_{\tilde{g}}$ at $x_0$.  Combining the upper bound of $\sum_i \tilde{g}^{i\ov{i}}$ with the equation $\det \tilde{g} = e^{F}$, we obtain  a uniform upper bound for $\tilde{g}_{i\ov{i}}$ for each $i$.  Then
$$|\partial \varphi|^2_g =\sum_i | \varphi_i|^2 = \sum_i \tilde{g}_{i\ov{i}} \tilde{g}^{i\ov{i}} | \varphi_i|^2 \le C | \partial \varphi|^2_{\tilde{g}} \le C',$$
as required. This completes the proof of the proposition.
 \end{proof}

Before we end this section, we state the following lemma which we will need later.  It is an immediate consequence of the previous proposition, and the estimate (\ref{dps2}), (but taking now $\ve=1/2$).

\begin{lemma} \label{lemmapv} For a uniform constant $C$,
$$L(| \partial \varphi|^2_g) \ge \frac{1}{2} \sum_k \tilde{g}^{i\ov{i}} (| e_i e_k (\varphi)|^2 + |e_i \ov{e}_k(\varphi)|^2) - C \sum_i \tilde{g}^{i\ov{i}} - C.$$
\end{lemma}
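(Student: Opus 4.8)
The plan is to recognize that Lemma \ref{lemmapv} is just a rereading of the computation in the proof of Proposition \ref{gradient estimate} that produced \eqref{dps2}, specialized to $\ve=1/2$, followed by absorbing the gradient term and the $F$-term using the bound $|\partial\varphi|_g\le C$ already established in that proposition.

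First I would point out that the chain of (in)equalities running from \eqref{dps} down to \eqref{dps2} never used the fact that $x_0$ is the maximum point of $Q$: the only ingredients were the diagonality of $\tilde g_{i\ov j}$ at the point under consideration, the Monge--Amp\`ere equation in the form $\log\det(\tilde g_{i\ov j})=F$ together with its first derivatives $\tilde g^{i\ov j}e_k(\tilde g_{i\ov j})=F_k$, the fact that commutators of the frame vector fields (such as $[\ov e_k,e_i]$, $[e_i,\ov e_i]^{(0,1)}$) are again first order operators with coefficients determined by $(M,g,J)$, and Young's inequality. Hence, fixing an arbitrary point and choosing the local unitary frame $\{e_1,\dots,e_n\}$ so that $\tilde g_{i\ov j}$ is diagonal there (which is also the choice of frame for which the quantity $\sum_k\tilde g^{i\ov i}(|e_ie_k(\varphi)|^2+|e_i\ov e_k(\varphi)|^2)$ in the statement is unambiguously defined), the estimate \eqref{dps2} holds verbatim at that point for every $\ve\in(0,1/2]$, with constants depending only on $(M,g,J)$ and bounds for $F$.

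Taking $\ve=1/2$ in \eqref{dps2} gives
\[
L(|\partial\varphi|^2_g)\ge \tfrac12\sum_k\tilde g^{i\ov i}\big(|e_ie_k(\varphi)|^2+|e_i\ov e_k(\varphi)|^2\big)-C|\partial\varphi|^2_g\sum_i\tilde g^{i\ov i}+2\,\textrm{Re}\Big(\sum_k\varphi_kF_{\ov k}\Big).
\]
By Proposition \ref{gradient estimate} we have $|\partial\varphi|_g\le C$, so the middle term satisfies $-C|\partial\varphi|^2_g\sum_i\tilde g^{i\ov i}\ge -C'\sum_i\tilde g^{i\ov i}$, and since $F$ is fixed, $\big|2\,\textrm{Re}(\sum_k\varphi_kF_{\ov k})\big|\le 2|\partial\varphi|_g\,|\partial F|_g\le C''$. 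Substituting these two bounds into the displayed inequality yields precisely the asserted estimate.

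There is essentially no serious obstacle here: the only point requiring (mild) care is to confirm that every step from \eqref{dps} to \eqref{dps2} is valid pointwise rather than only at the maximum of $Q$, and that the constants there depend only on $(M,g,J)$ and bounds on $F$. This is indeed the case, since the commutator terms are first order with coefficients depending only on the frame and $J$, and $|\partial\varphi|_g$ is controlled by Proposition \ref{gradient estimate}.
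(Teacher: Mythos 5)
Your proposal is correct and is exactly the argument the paper intends: the lemma is stated as an immediate consequence of \eqref{dps2} with $\ve=1/2$ together with the gradient bound of Proposition \ref{gradient estimate}, and your verification that the derivation of \eqref{dps2} is pointwise (requiring only that $\tilde g_{i\ov j}$ be diagonalized in the chosen unitary frame, not that the point be a maximum of $Q$) is the right and only point needing care.
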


\section{Second order estimate} \label{sectionsecondorder}

In this section we prove:

\begin{proposition}\label{secondord}
Let $\varphi$ solve the Monge-Amp\`ere equation (\ref{ma2}).  Then there exists a constant $C$, depending only on $(M,g, J)$ and bounds for $F$ such that
$$| \nabla^2 \varphi |_g \le C,$$
where $\nabla^2 \varphi$ denotes the real Hessian of $\varphi$ with respect to the metric $g$ (using its Levi-Civita connection).
\end{proposition}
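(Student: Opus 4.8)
The plan is to apply the maximum principle to a quantity of the form
$$ Q = \log \lambda_1(\nabla^2 \varphi) + h(|\partial\varphi|^2_g) + e^{-A\varphi}, $$
where $\lambda_1$ is the largest eigenvalue of the real Hessian of $\varphi$ (viewed as an endomorphism via $g$), $h$ is an auxiliary function to be chosen (convex increasing, as in \cite{WY,S,STW}), and $A$ is a large constant. Since $\lambda_1$ need not be smooth at a point where the top eigenvalue has multiplicity, one perturbs in the usual way: near the maximum point $x_0$ of $Q$, pick the unit eigenvector $V_1$ of $\nabla^2\varphi$ at $x_0$, extend it to a vector field with $|V_1|_g \equiv 1$ and $\nabla V_1(x_0)=0$ in suitable coordinates, and replace $\lambda_1$ by the smooth function $\nabla^2\varphi(V_1,V_1)$, which touches $\lambda_1$ from below; this legitimizes the computation. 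Then I would compute $L(Q) \le 0$ at $x_0$, where $L = \tilde g^{i\bar j}(e_i\bar e_j - [e_i,\bar e_j]^{(0,1)})$ is the linearized operator from Section \ref{sectionfirstorder}, and extract a contradiction if $\lambda_1$ is too large.

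The key steps, in order: (1) establish the concavity inequality for $L$ acting on $\lambda_1$ — differentiating the equation $\log\det\tilde g = F$ twice in the direction $V_1$ produces the standard good third-order term $\sum \tilde g^{i\bar i}\tilde g^{j\bar j}|e_i(\tilde g_{j\bar j})|^2 \ge 0$ plus $V_1V_1(F)$, which is bounded; (2) carefully track all the \emph{extra} terms coming from non-integrability — commutators $[e_i,\bar e_j]$ hitting $\varphi$ produce first- and second-order terms in $\varphi$ (already controlled by Propositions \ref{uniform estimate} and \ref{gradient estimate}), but crucially also produce \emph{linear third-order terms}, i.e. terms with three unbarred (or three barred) derivatives of $\varphi$, which are the genuine new difficulty flagged in the paper's outline; (3) use the good third-order terms from $L(\lambda_1)$ together with the good terms $\frac12\sum\tilde g^{i\bar i}(|e_ie_k\varphi|^2+|e_i\bar e_k\varphi|^2)$ from Lemma \ref{lemmapv} (via the $h(|\partial\varphi|^2_g)$ term in $Q$) to absorb these bad linear third-order terms by Cauchy–Schwarz; (4) handle the case distinction on whether the complex Hessian $\tilde g(V_1^{(1,0)}, \overline{V_1^{(1,0)}})$ is small or comparable to $\lambda_1$ — if the real top eigendirection is nearly "null" for the complex Hessian, the good third-order terms degenerate and one needs the separate argument promised in Lemma \ref{lemmados}; (5) use the $e^{-A\varphi}$ term with $A$ large to dominate the terms of the form $C\sum_i\tilde g^{i\bar i}$ (which are themselves controlled once one knows $\lambda_1$ large forces $\operatorname{tr}_{\tilde g} g$ large, via the Monge–Amp\`ere equation and the arithmetic–geometric mean inequality), finally yielding an upper bound on $\lambda_1$, hence on $|\nabla^2\varphi|_g$.

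The main obstacle I expect is step (2)–(3): controlling the linear third-order terms. Unlike the quadratic third-order terms (which are manifestly absorbable into the good terms), these linear ones — schematically $\tilde g^{i\bar i}\,\partial\varphi\cdot e_ie_ie_j(\varphi)$ or worse, $V_1$-derivatives of commutators applied to $\varphi$ — are not obviously dominated by $\sum\tilde g^{i\bar i}|e_ie_k\varphi|^2$ because the index structure does not match up (this is precisely where Pli\'s's argument has its gap). My plan to get around this is to be very careful about \emph{which} third derivatives appear: after commuting derivatives to pair them correctly and using the differentiated equation, the uncontrolled terms should all be of the form (bounded)$\times\tilde g^{i\bar i}\times$(second derivative of $\varphi$)$\times$(first derivative), which Young's inequality converts to $\varepsilon\sum\tilde g^{i\bar i}|{\rm second}|^2 + C\varepsilon^{-1}\sum\tilde g^{i\bar i}$, both absorbable. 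The genuinely dangerous residue — the part that cannot be split this way — should only survive in the degenerate "complex-null eigendirection" regime, which is then excluded by the separate Lemma \ref{lemmados}. Getting this bookkeeping exactly right, rather than the individual estimates, is the technical heart of the argument.
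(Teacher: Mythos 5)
Your overall strategy is the paper's: the same test quantity $\log\lambda_1+h(|\partial\varphi|_g^2)+e^{-A\varphi}$, the operator $L$, the good terms from Lemma \ref{lemmapv}, and a case distinction according to whether $V_1$ is nearly null for the complex Hessian. But there are two linked gaps at the technical heart. First, the perturbation you describe --- extending $V_1$ and replacing $\lambda_1$ by the lower barrier $\nabla^2\varphi(V_1,V_1)$ --- throws away the second-derivative formula for the eigenvalue, i.e.\ the positive terms
$$2\sum_{\alpha>1}\tilde g^{i\ov{i}}\,\frac{|e_i(\varphi_{V_\alpha V_1})|^2}{\lambda_1-\lambda_\alpha}$$
coming from $\lambda_1^{\alpha\beta,\gamma\delta}$. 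The paper instead perturbs the endomorphism itself (subtracting $g^{-1}B$ with $B=\mathrm{Id}-V_1\otimes V_1$) precisely so that $\lambda_1$ becomes smooth with a \emph{simple} top eigenvalue and this formula is available; these terms, with their sharp constant $2$, are indispensable in what follows.

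Second --- and this is the real omission --- your plan never confronts the term $-\tilde g^{i\ov{i}}|e_i(\lambda_1)|^2/\lambda_1^2$ produced by differentiating $\log\lambda_1$. This term is of exactly the same order as the good third-order terms and cannot be disposed of by Young's inequality into ``$\varepsilon\cdot(\text{second derivative})^2+C\varepsilon^{-1}$'' pieces: it requires exact cancellation of leading constants. The paper handles it by (i) using the critical equation $d\hat Q(x_0)=0$ to rewrite $e_i(\lambda_1)/\lambda_1$ as $Ae^{-A\varphi}e_i(\varphi)-h'e_i(|\partial\varphi|^2_g)$ for the directions $i\in I$ where $\tilde g_{i\ov{i}}$ is large, absorbing the result into $h''=2(h')^2$ and the $A^2e^{-A\varphi}$ term; and (ii) for $i\notin I$, proving the pointwise identity $e_i(\varphi_{V_1V_1})=\sqrt2\sum_q\ov{\nu_q}\,V_1(\tilde g_{i\ov{q}})-\sqrt{-1}\sum_{\alpha>1}\mu_\alpha e_i(\varphi_{V_1V_\alpha})+O(\lambda_1)$ and matching the two pieces against the two good terms $\tilde g^{p\ov{p}}\tilde g^{q\ov{q}}|V_1(\tilde g_{p\ov{q}})|^2/\lambda_1$ and the eigenvalue-concavity terms by Cauchy--Schwarz, with constants tuned through a further dichotomy on whether $\tilde g(\tilde e_1,\ov{\tilde e}_1)$ is comparable to $\lambda_1$. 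The Lemma~\ref{lemmados}-type statement (smallness of the components $\nu_q$ for $q\notin I$) enters only as an ingredient of this matching, not as a way to ``exclude'' a degenerate regime: the dangerous term is present in every case. Without the critical equation and this exact decomposition the argument does not close, so as written the proposal has a genuine gap precisely where you anticipated the difficulty would lie.
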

To be precise, the dependence of the constant $C$ on $F$ is as follows: $C$ depends only on upper bounds for $\sup_M F, \sup_M |\de F|_g$, and a lower bound for $\nabla^2 F$ w.r.t. $g$. It does not depend on $\inf_M F$.

\begin{proof}  We first make the preliminary observation that
\begin{equation}\label{stupid}
| \nabla^2 \varphi |_g\leq C\lambda_1( \nabla^2 \varphi)+C',
\end{equation}
everywhere on $M$,
where $\lambda_1( \nabla^2 \varphi)$ is the largest eigenvalue of the real Hessian $\nabla^2 \varphi$ (with respect to the metric $g$). Indeed, if we write $\lambda_1(\nabla^2\vp) \geq \lambda_2(\nabla^2\vp) \ge  \ldots \ge \lambda_{2n}(\nabla^2\vp)$ for all the eigenvalues, then
$$| \nabla^2 \varphi |_g=\left(\sum_{\alpha=1}^{2n}\lambda_\alpha^2\right)^{\frac{1}{2}}\leq C(|\lambda_1|+|\lambda_{2n}|).$$
On the other hand \eqref{lapp} gives
\begin{equation}\label{lapp2}
\sum_{\alpha=1}^{2n}\lambda_\alpha=\Delta\vp=2\Delta^C\vp+\tau(d\vp)\geq -2n+\tau(d\vp)\geq -C,
\end{equation}
using Proposition \ref{gradient estimate}. This inequality implies that $\lambda_1\geq -C,$ and so
$$|\lambda_1|\leq \lambda_1+C,$$
and it also implies that
$$|\lambda_{2n}|\leq C\lambda_1+C,$$
and \eqref{stupid} follows. Therefore it suffices to bound $\lambda_1(\nabla^2\vp)$ from above.
To achieve this we apply the maximum principle to the quantity
$$Q = \log \lambda_1( \nabla^2 \varphi) + h(| \partial \varphi |^2_g) +e^{-A \varphi},$$
on the set $\{ x \in M \ | \ \lambda_1( \nabla^2 \varphi(x)) >0 \}$, which we may assume is nonempty, without loss of generality.
Here $h$ is given by
\begin{equation} \label{defnh}
h(s) = - \frac{1}{2} \log (1+ \sup_M |\partial \varphi|^2_g  - s),
\end{equation}
and $A>1$ is a constant to be determined (which will be uniform, in the sense that it will depend only on the background data).  Observe that $h(| \partial \varphi|^2_g)$ is uniformly bounded, and we have
\begin{equation} \label{proph}
\frac{1}{2}\geq h' \geq \frac{1}{2+2\sup_M |\partial \varphi|^2_g}>0, \quad \textrm{and } h'' = 2 (h')^2,
\end{equation}
where we evaluate $h$ and its derivatives at $|\de\vp|^2_g$.

 Note that $Q$ is a continuous function on its domain, and goes to $-\infty$ on its boundary (if this is nonempty), and hence achieves a maximum at a point $x_0 \in M$ with $\lambda_1(\nabla^2 \varphi(x_0))>0$.  However, $Q$ is not smooth in general, since the eigenspace associated to $\lambda_1$ may have dimension strictly larger than $1$.  We deal with this using a perturbation argument, as in \cite{S, STW}.

First we  discuss the choice of coordinate system in a neighborhood of $x_0$.
Let $V_1$ be a unit vector (with respect to $g$) corresponding to the largest eigenvalue $\lambda_1$ of $\nabla^2 \varphi$, so that at $x_0$,
$$\nabla^2\varphi(V_1,V_1) = \lambda_1.$$
Since $g$ is almost Hermitian, it follows easily that we can find a coordinate system $x^1, x^2, \ldots, x^{2n}$ centered at $x_0$,
such that in the frame $\de_1,\dots,\de_{2n}$ the almost complex structure $J$ is standard at $x_0$ (i.e. $J\de_1=\de_2,$ etc.) and the vectors $\de_1,\dots,\de_{2n}$ are $g$-orthonormal at $x_0$.  Furthermore, after making a quadratic change of coordinates, we  assume that the first derivatives of $g$
vanish at $x_0$:
\begin{equation} \label{fdgv}
\partial_{\gamma} g_{\alpha \beta}|_{x_0} = 0, \quad \textrm{for all } \alpha, \beta, \gamma=1,\dots,2n.
\end{equation}
If at $x_0$ we let
\begin{equation}\label{frame}
e_1 = \frac{1}{\sqrt{2}}(\partial_1 - \sqrt{-1} \partial_2), \ e_2 = \frac{1}{\sqrt{2}}(\partial_3 - \sqrt{-1} \partial_4), \ldots,  e_n = \frac{1}{\sqrt{2}}(\partial_{2n-1} - \sqrt{-1} \partial_{2n}),
\end{equation}
then these form a frame of $(1,0)$ vectors at $x_0$, and we have that $g_{i\ov{j}} := g(e_i, \ov{e}_j)=\delta_{ij}$ (i.e. the frame is $g$-unitary). By performing a further linear change of coordinates at $x_0$, which commutes with $J$, we may assume that
at $x_0$ we have $g_{i\ov{j}}=\delta_{ij}$ and $(\tilde{g}_{i\ov{j}})$ is diagonal with
$$\tilde{g}_{1\ov{1}} \ge \tilde{g}_{2\ov{2}} \ge \cdots \ge \tilde{g}_{n\ov{n}}.$$
This does not affect condition \eqref{fdgv}, or the fact that $J$ is standard at $x_0$, and since $g$ is almost Hermitian we see that the (new) real vectors $\de_1,\dots,\de_{2n}$ are still $g$-orthonormal at $x_0$.

We extend $e_1,\dots,e_n$ smoothly to a $g$-unitary frame of $(1,0)$ vectors in a neighborhood of $x_0$.
The coordinate system and the local unitary frame are now fixed.
Extend $V_1$ to an orthonormal basis $V_1, \ldots, V_{2n}$  of eigenvectors of $\nabla^2\vp$ (with respect to $g$) at $x_0$, corresponding to eigenvalues $\lambda_1(\nabla^2\vp) \geq \lambda_2(\nabla^2\vp) \ge  \ldots \ge \lambda_{2n}(\nabla^2\vp)$. Write $\{ V_{\ \, \beta}^{\alpha} \}_{\alpha=1}^{2n}$ for the components of the vector $V_{\beta}$ at $x_0$, with respect to the coordinates $x^1, \ldots, x^{2n}$ described above.  We extend $V_1, V_2, \ldots, V_{2n}$ to be vector fields in a neighborhood of $x_0$  by taking the components to be constant.  Note that we do not assert that the $V_i$ are eigenvectors for $\nabla^2 \varphi$ outside $x_0$.

We apply a perturbation argument. We define near $x_0$ a smooth section $B= (B_{\alpha \beta})$ of $T^*M \otimes T^*M$ by setting its value in our coordinates $x^1, \ldots, x^{2n}$ at $x_0$ to be
$$B_{\alpha \beta} = \delta_{\alpha \beta} - V_{\ \, 1}^{\alpha} V_{\ \, 1}^{\beta},$$
and extending it to be constant in these coordinates nearby $x_0$.  At $x_0$,
$$B(V_1,V_1) = \sum_{\alpha, \beta =1}^{2n} B_{\alpha \beta} V_{\ \, 1}^{\alpha} V_{\ \, 1}^{\beta} =0,$$
 and $B(Y,Y) = 1$ for any unit vector $Y$ $g$-orthogonal to $V_1$.

 It is convenient to work with endomorphisms of $TM$ instead of symmetric sections of $T^*M \otimes T^*M$.
 We define a local endomorphism $\Phi=( \Phi_{\ \, \beta}^{\alpha})$ of $TM$ by
\begin{equation} \label{definePhi}
\Phi_{\ \, \beta}^{\alpha}=  g^{ \alpha \gamma} \nabla_{\gamma \beta}^2 \varphi -  g^{\alpha \gamma} B_{\gamma \beta}.
\end{equation}
  Now instead of $\lambda_1( \nabla^2 \varphi)$, we consider  $\lambda_1(\Phi)$, the largest eigenvalue of the endomorphism $\Phi$.

Note that $B_{\alpha \beta}$ is nonnegative definite and hence $\lambda_1(\Phi) \le \lambda_1(\nabla^2 \varphi)$ in a neighborhood of $x_0$ while, by definition of $B$ and $V_1$,
$\lambda_1(\Phi) = \lambda_1(\nabla^2 \varphi)$ at $x_0$.  Moreover, at $x_0$, the eigenspace of $\Phi$ corresponding to the largest eigenvalue now has dimension 1, and hence $\lambda_1= \lambda_1(\Phi)$ is smooth in a neighborhood of $x_0$.

We can now consider the perturbed quantity $\hat{Q}$ defined in a neighborhood of $x_0$ by
$$\hat{Q} = \log \lambda_1(\Phi) + h(| \partial \varphi |^2_g) +e^{-A \varphi},$$
which still obtains a local maximum at $x_0$.  $V_1,\dots,V_{2n}$ are eigenvectors for $\Phi$ at $x_0$, corresponding to eigenvalues $\lambda_1(\Phi) > \lambda_2(\Phi) \ge  \ldots \ge \lambda_{2n}(\Phi)$.  In what follows we write $\lambda_{\alpha}$ for $\lambda_{\alpha}(\Phi)$.

The first and second derivatives of $\lambda_1$ at $x_0$ are given by the following lemma.

\begin{lemma}  At $x_0$, we have
\begin{equation} \label{formulae}
\begin{split}
\lambda_1^{\alpha \beta} := {} & \frac{\partial \lambda_1}{\partial \Phi^{\alpha}_{\ \, \beta}} =  V_{\ \, 1}^{\alpha} V_{\ \, 1}^{\beta} \\
\lambda_1^{\alpha \beta, \gamma \delta} := {} & \frac{\partial^2 \lambda_1}{\partial \Phi^{\alpha}_{\ \, \beta} \partial \Phi^{\gamma}_{\ \, \delta}} = \sum_{\mu >1}   \frac{V_{\ \, 1}^{\alpha} V_{\ \, \mu}^{\beta} V_{\ \, \mu}^{\gamma} V_{\ \, 1}^{\delta} + V_{\ \, \mu}^{\alpha} V_{\ \, 1}^{\beta} V_{\ \, 1}^{\gamma} V_{\ \, \mu}^{\delta}}{\lambda_1 - \lambda_{\mu}}, \end{split}
\end{equation}
where Greek indices $\alpha, \beta, \ldots$ go from $1$ to $2n$, unless otherwise indicated.
\end{lemma}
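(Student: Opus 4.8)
The statement is the classical first- and second-order perturbation formula for a simple eigenvalue, so the plan is to derive it by differentiating the eigenvalue–eigenvector equation along one-parameter families of endomorphisms. The point that makes the standard theory apply is that, by construction of $B$ and our choice of frame, at $x_0$ the endomorphism $\Phi$ is represented by a \emph{symmetric} matrix: there $g_{\alpha\beta}=\delta_{\alpha\beta}$, and both $\nabla^2\vp$ and $B$ are symmetric, so $\Phi^{\alpha}_{\ \beta}=g^{\alpha\gamma}\nabla^2_{\gamma\beta}\vp-g^{\alpha\gamma}B_{\gamma\beta}$ is symmetric at $x_0$, with $V_1,\dots,V_{2n}$ a $g$-orthonormal eigenbasis and with $\lambda_1$ strictly larger than all the other eigenvalues (this separation is exactly what the insertion of $B$ buys us). Consequently $\lambda_1$ is a smooth function of the matrix entries $\Phi^{\alpha}_{\ \beta}$ near their values at $x_0$, and it remains only to compute its first two derivatives.

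For the first derivative, fix an arbitrary matrix $H=(H^{\alpha}_{\ \beta})$ and set $\Phi(t)=\Phi+tH$, with $v(t)$ the right eigenvector for the top eigenvalue $\lambda_1(t)$, normalised by $\sum_\alpha V_1^{\alpha}v(t)^{\alpha}=1$ (so $v(0)=V_1$ and the first variation $\dot v(0)$ is $g$-orthogonal to $V_1$). Differentiating $\Phi(t)v(t)=\lambda_1(t)v(t)$ at $t=0$ and pairing with $V_1$, the terms involving $\dot v(0)$ cancel because $\Phi(x_0)$ is symmetric, leaving $\dot\lambda_1(0)=\langle V_1,HV_1\rangle=\sum_{\alpha,\beta}V_1^{\alpha}V_1^{\beta}H^{\alpha}_{\ \beta}$. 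Since $H$ is arbitrary this is precisely $\lambda_1^{\alpha\beta}=V_1^{\alpha}V_1^{\beta}$.

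For the second derivative, expand $v(t)=V_1+tv_1+t^2v_2+\cdots$ and $\lambda_1(t)=\lambda_1+t\mu_1+t^2\mu_2+\cdots$ in the eigenvalue equation and match powers of $t$. The order-$t$ equation, paired with each $V_\mu$ for $\mu>1$ and using $\Phi V_\mu=\lambda_\mu V_\mu$, gives $\langle V_\mu,v_1\rangle=\langle V_\mu,HV_1\rangle/(\lambda_1-\lambda_\mu)$, hence $v_1=\sum_{\mu>1}\frac{\langle V_\mu,HV_1\rangle}{\lambda_1-\lambda_\mu}V_\mu$. The order-$t^2$ equation, paired with $V_1$ and using the normalisation $\langle V_1,v_1\rangle=\langle V_1,v_2\rangle=0$, gives $\mu_2=\langle V_1,Hv_1\rangle=\sum_{\mu>1}\frac{\langle V_1,HV_\mu\rangle\,\langle V_\mu,HV_1\rangle}{\lambda_1-\lambda_\mu}$. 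Since $\frac{d^2}{dt^2}\big|_{0}\lambda_1(\Phi+tH)=2\mu_2=\sum_{\alpha,\beta,\gamma,\delta}\lambda_1^{\alpha\beta,\gamma\delta}H^{\alpha}_{\ \beta}H^{\gamma}_{\ \delta}$, substituting $\langle V_\mu,HV_1\rangle=\sum V_\mu^{\alpha}H^{\alpha}_{\ \beta}V_1^{\beta}$ and $\langle V_1,HV_\mu\rangle=\sum V_1^{\gamma}H^{\gamma}_{\ \delta}V_\mu^{\delta}$ and symmetrising in the pairs $(\alpha\beta)\leftrightarrow(\gamma\delta)$ yields the stated formula for $\lambda_1^{\alpha\beta,\gamma\delta}$.

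The only genuine subtlety — and hence the step to be careful about — is that the entries $\Phi^{\alpha}_{\ \beta}$ are treated as independent variables, so the perturbation direction $H$ need not be symmetric and in principle left and right eigenvectors of $\Phi(t)$ differ; what rescues the computation is that they coincide at $x_0$ because $\Phi(x_0)$ is $g$-symmetric, which is exactly what allows the $\dot v$-contributions to drop out when pairing against $V_1$. Beyond this bookkeeping, the argument is the routine Rellich–Kato expansion, with the denominators $\lambda_1-\lambda_\mu$ controlled by the strict gap at $x_0$; this is the same computation used in \cite{WY, S, STW}.
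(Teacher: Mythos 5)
Your argument is correct. It does, however, take a slightly different route from the paper: the paper conjugates $\Phi$ by the constant orthogonal matrix $V=(V^{\alpha}_{\ \,\mu})$ to get $\Theta=V^{T}\Phi V$, which is diagonal at $x_0$ with a simple top eigenvalue, quotes the standard first- and second-derivative formulas for $\lambda_1(\Theta)$ from Spruck, and then transfers them back to $\Phi$ by the chain rule; you instead re-derive those standard formulas from scratch via the Rellich--Kato expansion of the eigenvalue equation $\Phi(t)v(t)=\lambda_1(t)v(t)$ along an arbitrary (not necessarily symmetric) perturbation direction $H$. Both arguments rest on exactly the same two facts — that $\Phi(x_0)$ is symmetric in the chosen $g$-orthonormal coordinates, and that the insertion of $B$ makes $\lambda_1$ strictly simple at $x_0$ so that $\lambda_1$ is smooth in the entries of $\Phi$ — and your bookkeeping is right: the normalisation $\langle V_1,v(t)\rangle=1$ kills the $\dot v$- and $v_2$-contributions when pairing with $V_1$, the order-$t$ equation paired with $V_\mu$ ($\mu>1$) produces the resolvent coefficients $\langle V_\mu,HV_1\rangle/(\lambda_1-\lambda_\mu)$, and symmetrising $2\mu_2$ in $(\alpha\beta)\leftrightarrow(\gamma\delta)$ gives precisely the stated $\lambda_1^{\alpha\beta,\gamma\delta}$. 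What your version buys is self-containedness (no appeal to the cited formulas) and an explicit justification that the formulas remain valid when the matrix entries are varied independently, i.e.\ for non-symmetric $H$, which the paper handles implicitly through the chain-rule computation; what the paper's version buys is brevity. Either proof is acceptable here.
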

\begin{proof}
Consider the constant orthogonal matrix $V=(V_{\ \, \beta}^{\alpha})$, where $V_{ \ \, \beta}^{\alpha}$ were defined above. It has the property that $V^{T}\Phi V$ is diagonal at $x_{0}$.  Define $\Theta=(\Theta_{\ \, \beta}^{\alpha}):=V^{T}\Phi V$, which has the same eigenvalues as $\Phi$. Since $\Theta$ is diagonal at $x_{0}$, with $\lambda_1$ distinct from all the other eigenvalues, we have the following well-known formulas (see e.g. \cite{Sp}):
\[\begin{split}
\frac{\partial\lambda_{1}(\Theta)}{\partial \Theta_{\ \, \mu}^{\nu}}&=\delta_{1\nu} \delta_{1\mu}\\
\frac{\partial^{2}\lambda_{1}(\Theta)}{\partial \Theta_{\ \, \mu}^{\nu}\partial \Theta_{\ \, \eta}^{\theta}}&=(1-\delta_{1\mu}) \frac{\delta_{1\nu}\delta_{1\eta}\delta_{\mu\theta}}{\lambda_{1}-\lambda_{\mu}}
+(1 - \delta_{1 \eta}) \frac{\delta_{1\theta}\delta_{1\mu}\delta_{\nu\eta}}{\lambda_{1}-\lambda_{\eta}}.
\end{split}\]
Hence, by the chain rule, we compute at $x_{0}$,
\[\begin{split}
\frac{\partial\lambda_{1}(\Phi)}{\partial \Phi_{\ \, \beta}^{\alpha}}=\frac{\partial\lambda_{1}(\Theta)}{\partial \Phi_{\ \, \beta}^{\alpha}}=\frac{\partial\lambda_{1}(\Theta)}{\partial \Theta_{\ \, \mu}^{\nu}}\cdot\frac{\partial \Theta_{\ \, \mu}^{\nu}}{\partial \Phi_{\ \, \beta}^{\alpha}}=\delta_{1\nu} \delta_{1\mu} V_{\ \, \nu}^{\alpha} V_{\ \, \mu}^{\beta}=V_{\ \, 1}^{\alpha}V_{\ \, 1}^{\beta}.
\end{split}\]
Similarly,
\[\begin{split}
\frac{\partial^{2}\lambda_{1}(\Phi)}{\partial \Phi_{\ \, \beta}^{\alpha}\partial \Phi_{\ \, \delta}^{\gamma}}
&=\frac{\partial^{2}\lambda_{1}(\Theta)}{\partial \Theta_{\ \, \mu}^{\nu}\partial \Theta_{\ \, \eta}^{\theta}}\cdot\frac{\partial \Theta_{\ \, \mu}^{\nu}}{\partial \Phi_{\ \, \beta}^{\alpha}}\cdot\frac{\partial \Theta_{\ \, \eta}^{\theta}}{\partial \Phi_{\ \, \delta}^{\gamma}}\\
&=\left((1-\delta_{1\mu})\frac{\delta_{1\nu}\delta_{1\eta}\delta_{\mu\theta}}{\lambda_{1}-\lambda_{\mu}}
+(1-\delta_{1\eta})\frac{\delta_{1\theta}\delta_{1\mu}\delta_{\nu\eta}}{\lambda_{1}-\lambda_{\eta}}\right)V_{\ \, \nu}^{\alpha}V_{\ \, \mu}^{\beta}V_{\ \, \theta}^{\gamma}V_{\ \, \eta}^{\delta}\\
&=(1-\delta_{1\mu})\frac{V_{\ \, 1}^{\alpha}V_{\ \, \mu}^{\beta}V_{\ \, \mu}^{\gamma}V_{\ \, 1}^{\delta}}{\lambda_{1}-\lambda_{\mu}}+(1-\delta_{1\eta})\frac{V_{\ \, \eta}^{\alpha}V_{\ \, 1}^{\beta}V_{\ \, 1}^{\gamma}V_{\ \, \eta}^{\delta}}{\lambda_{1}-\lambda_{\eta}},
\end{split}\]
as required.
\end{proof}

As in the previous section, we denote by $L$ the operator $$L= \tilde{g}^{i\ov{j}} (e_i \ov{e}_j - [e_i, \ov{e}_j]^{(0,1)}).$$  We first prove a lower bound for $L(\lambda_1)$.
In what follows, we assume without loss of generality that $\lambda_1>>1$ at $x_0$.  We also note that from (\ref{ma2}) and the arithmetic-geometric mean inequality,
\begin{equation} \label{ag}
\sum_i\ti{g}^{i\ov{i}} \ge c,
\end{equation}
for a uniform $c>0$.

\begin{lemma} \label{lemmalbl1}  At $x_0$, we have
\begin{equation} \label{l1lb}
\begin{split}
L(\lambda_1) \ge {} & 2 \sum_{\alpha>1} \tilde{g}^{i\ov{i}} \frac{ |e_i (\varphi_{V_{\alpha} V_1})|^2}{\lambda_1 - \lambda_{\alpha}} + \tilde{g}^{p\ov{p}} \tilde{g}^{q\ov{q}} |V_1(\tilde{g}_{p\ov{q}})|^2 \\
& - 2 \tilde{g}^{i\ov{i}} [V_1, e_i] V_1 \ov{e}_i (\varphi) - 2\tilde{g}^{i\ov{i}} [V_1, \ov{e}_i] V_1 e_i (\varphi) - C \lambda_1 \sum_i \tilde{g}^{i\ov{i}},
\end{split}
\end{equation}
where we are writing
$$\vp_{\alpha\beta}=\nabla^2_{\alpha\beta}\vp,\quad \varphi_{V_{\alpha} V_{\beta}} =  \varphi_{\gamma \delta} V_{\ \, \alpha}^{\gamma} V_{\ \, \beta}^{\delta} =\nabla^2\vp(V_\alpha,V_\beta).$$
\end{lemma}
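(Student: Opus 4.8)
The plan is to compute $L(\lambda_1)$ at $x_0$ via the chain rule and then reshape the outcome into \eqref{l1lb}. Since $B$ was chosen so that the top eigenvalue of $\Phi$ is simple near $x_0$, $\lambda_1=\lambda_1(\Phi)$ is smooth there and $X(\lambda_1)=\lambda_1^{\alpha\beta}X(\Phi^\alpha_\beta)$ for every first‑order operator $X$; applying this to $L=\tilde{g}^{i\ov{i}}(e_i\ov{e}_i-[e_i,\ov{e}_i]^{(0,1)})$ and using that $(\tilde g_{i\ov j})$ is diagonal at $x_0$ gives
\[
L(\lambda_1)=\lambda_1^{\alpha\beta}L(\Phi^\alpha_\beta)+\tilde{g}^{i\ov{i}}\lambda_1^{\alpha\beta,\gamma\delta}(e_i\Phi^\alpha_\beta)(\ov{e}_i\Phi^\gamma_\delta).
\]
Write $W=V_1$. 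At $x_0$ the coordinate metric satisfies $g_{\alpha\beta}=\delta_{\alpha\beta}$ with vanishing first derivatives (so $e_i(g^{\alpha\gamma})|_{x_0}=0$ and the Christoffel symbols vanish at $x_0$), while $B$ is constant in the coordinates; hence $e_i(\Phi^\alpha_\beta)|_{x_0}=e_i(\nabla^2_{\alpha\beta}\varphi)|_{x_0}$, and, after accounting for the $(\nabla_WW)(\varphi)$ correction relating $\varphi_{WW}=\nabla^2\varphi(W,W)$ to $WW(\varphi)$, one gets $\lambda_1^{\alpha\beta}L(\Phi^\alpha_\beta)=L(WW\varphi)+O(\lambda_1\sum_i\tilde{g}^{i\ov{i}})$, where the error absorbs $L$ applied to $g^{\alpha\gamma}$ and to $B$ and the Christoffel terms; the otherwise dangerous third‑order piece $\Gamma\cdot L(\partial\varphi)$ drops because $\Gamma(x_0)=0$, and the rest is controlled by $|\nabla^2\varphi|_g\le C\lambda_1$ from \eqref{stupid}. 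For the second term, inserting the formula for $\lambda_1^{\alpha\beta,\gamma\delta}$, contracting with the (constant) components of $V_1$ and $V_\alpha$, and using that $\nabla^2\varphi$ is symmetric and real (so $\ov{e}_i\varphi_{V_\alpha V_1}=\overline{e_i\varphi_{V_\alpha V_1}}$), it collapses to $2\sum_{\alpha>1}\tilde{g}^{i\ov{i}}\frac{|e_i(\varphi_{V_\alpha V_1})|^2}{\lambda_1-\lambda_\alpha}$, the first good term in \eqref{l1lb}.

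It then remains to estimate $L(WW\varphi)$ from below, and for this I would differentiate $\log\det\tilde{g}_{i\ov{j}}=F$ twice along $W$. At $x_0$, where $(\tilde g_{i\ov j})$ is diagonal and the frame is $g$‑unitary, this gives
\[
\sum_i\tilde{g}^{i\ov{i}}WW(\tilde{g}_{i\ov{i}})=WW(F)+\tilde{g}^{p\ov{p}}\tilde{g}^{q\ov{q}}|W(\tilde{g}_{p\ov{q}})|^2\ \ge\ \tilde{g}^{p\ov{p}}\tilde{g}^{q\ov{q}}|W(\tilde{g}_{p\ov{q}})|^2-C\lambda_1\sum_i\tilde{g}^{i\ov{i}},
\]
the second good term appearing, $WW(F)\ge -C$ following from the assumed lower bound on $\nabla^2F$ (the $\nabla_WW$ correction vanishing since $\Gamma(x_0)=0$), and $-C\ge-C\lambda_1\sum_i\tilde{g}^{i\ov{i}}$ by \eqref{ag}. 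Using $\tilde{g}_{i\ov{i}}=g_{i\ov{i}}+e_i\ov{e}_i(\varphi)-[e_i,\ov{e}_i]^{(0,1)}(\varphi)$ with $g_{i\ov i}\equiv 1$, the difference $\sum_i\tilde{g}^{i\ov{i}}WW(\tilde{g}_{i\ov{i}})-L(WW\varphi)$ equals $\tilde{g}^{i\ov{i}}[WW,e_i\ov{e}_i](\varphi)-\tilde{g}^{i\ov{i}}[WW,[e_i,\ov{e}_i]^{(0,1)}](\varphi)$; I would then expand $[WW,e_i\ov{e}_i]=W[W,e_i]\ov{e}_i+We_i[W,\ov{e}_i]+[W,e_i]\ov{e}_iW+e_i[W,\ov{e}_i]W$ and commute $W$ into the required slot in each term, each rearrangement costing a second‑order operator with bounded coefficients applied to $\varphi$, hence $O(\lambda_1)$. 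This produces exactly $2\tilde{g}^{i\ov{i}}[W,e_i]W\ov{e}_i(\varphi)+2\tilde{g}^{i\ov{i}}[W,\ov{e}_i]We_i(\varphi)+O(\lambda_1\sum_i\tilde{g}^{i\ov{i}})$, while $\tilde{g}^{i\ov{i}}[WW,[e_i,\ov{e}_i]^{(0,1)}](\varphi)=O(\lambda_1\sum_i\tilde{g}^{i\ov{i}})$ since $[e_i,\ov{e}_i]^{(0,1)}$ is first order with bounded coefficients. Combining the three displays yields \eqref{l1lb}.

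The main obstacle I anticipate is the bookkeeping in the commutator expansion of $[WW,e_i\ov{e}_i]$: one must handle the non‑constancy of the frame fields $e_i$ and of $W=V_1$ (which has constant components only in the chosen coordinates) and check that every term other than the two displayed third‑order ones is genuinely $O(\lambda_1\sum_i\tilde{g}^{i\ov{i}})$ — in particular that no new uncontrolled ``three barred'' or ``three unbarred'' third‑order term is created here (these are exactly the terms kept explicit in \eqref{l1lb} and handled in the subsequent lemmas). A second, more delicate point is the systematic use of $\Gamma(x_0)=0$, both when passing from $\varphi_{WW}$ to $WW(\varphi)$ and when bounding $WW(F)$ from below, which is what keeps the constant $C$ depending on $F$ only through $\sup_M F$, $\sup_M|\partial F|_g$ and a lower bound for $\nabla^2 F$, as advertised.
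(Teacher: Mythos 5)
Your proposal is correct and follows essentially the same route as the paper: the chain rule applied to $\lambda_1(\Phi)$ with the second-derivative term yielding the first good term, the reduction of $\lambda_1^{\alpha\beta}L(\Phi^{\alpha}_{\ \,\beta})$ to $L(V_1V_1(\varphi))$ modulo $O(\lambda_1\sum_i\tilde{g}^{i\ov{i}})$, the commutation of $V_1V_1$ past $L$ to isolate exactly the two mixed third-order terms $-2\tilde{g}^{i\ov{i}}[V_1,e_i]V_1\ov{e}_i(\varphi)-2\tilde{g}^{i\ov{i}}[V_1,\ov{e}_i]V_1e_i(\varphi)$, and a second differentiation of the Monge--Amp\`ere equation along $V_1$ for the $\tilde{g}^{p\ov{p}}\tilde{g}^{q\ov{q}}|V_1(\tilde{g}_{p\ov{q}})|^2$ term. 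The only (harmless) divergence is that you dispose of the $(\nabla_{V_1}V_1)(\varphi)$ correction by invoking $\Gamma(x_0)=0$ directly, whereas the paper controls $\tilde{g}^{i\ov{i}}e_i\ov{e}_i((\nabla_{V_1}V_1)(\varphi))$ by applying the once-differentiated equation to the vector field $\nabla_{V_1}V_1$; both arguments are valid.
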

\begin{proof}
Using  (\ref{fdgv}), (\ref{formulae}) and the
fact that $\tilde{g}$ is diagonal at $x_0$,
\begin{equation} \label{Llambda1}
\begin{split}
L(\lambda_1) = {} & \tilde{g}^{i\ov{i}} \lambda_1^{\alpha \beta, \gamma \delta} e_i (\Phi^{\gamma}_{\ \, \delta}) \ov{e}_i (\Phi^{\alpha}_{\ \,  \beta}) + \tilde{g}^{i\ov{i}} \lambda_1^{\alpha \beta} e_i\ov{e}_i (\Phi^{\alpha}_{\ \, \beta}) -\ti{g}^{i\ov{i}}\lambda_1^{\alpha \beta}[e_i, \ov{e}_i]^{(0,1)}(\Phi^{\alpha}_{\ \, \beta})\\
= {} & \tilde{g}^{i\ov{i}} \lambda_1^{\alpha \beta, \gamma \delta} e_i (\varphi_{\gamma \delta}) \ov{e}_i (\varphi_{\alpha \beta}) + \tilde{g}^{i\ov{i}} \lambda_1^{\alpha \beta} e_i\ov{e}_i (\varphi_{\alpha \beta}) + \tilde{g}^{i\ov{i}} \lambda_1^{\alpha \beta} \varphi_{ \gamma \beta} e_i \ov{e}_i (g^{\alpha \gamma}) \\
& -  \tilde{g}^{i\ov{i}} \lambda_1^{\alpha \beta}B_{\gamma \beta} e_i \ov{e}_i (g^{\alpha \gamma})
 -\ti{g}^{i\ov{i}}\lambda_1^{\alpha \beta}[e_i, \ov{e}_i]^{(0,1)}(\vp_{\alpha \beta})\\
\ge {} & 2 \sum_{\alpha >1} \tilde{g}^{i\ov{i}} \frac{ |e_i (\varphi_{V_{\alpha} V_1})|^2}{\lambda_1-\lambda_{\alpha}} + \tilde{g}^{i\ov{i}} e_i \ov{e}_i (\varphi_{V_1 V_1})
-\ti{g}^{i\ov{i}}[e_i, \ov{e}_i]^{(0,1)}(\vp_{V_1V_1}) \\ {} & - C \lambda_1 \sum_i \tilde{g}^{i\ov{i}}.
\end{split}
\end{equation}
Next, we claim that
\begin{equation} \label{blah0}
\begin{split}
\tilde{g}^{i\ov{i}} e_i \ov{e}_i &(\varphi_{V_1 V_1})-\ti{g}^{i\ov{i}}[e_i, \ov{e}_i]^{(0,1)}(\vp_{V_1V_1}) \ge \\
= {} &\tilde{g}^{i\ov{i}} V_1 V_1 (\tilde{g}_{i\ov{i}})  -2\tilde{g}^{i\ov{i}}[V_1,e_i]V_1\ov{e}_i(\vp)-2\tilde{g}^{i\ov{i}}[V_1,\ov{e}_i]V_1 e_i(\vp)- C \lambda_1 \sum_i \tilde{g}^{i\ov{i}}.
\end{split}
\end{equation}
Given this, the lemma follows, since in our coordinates, our equation (\ref{ma2}) is
$$\log \det \tilde{g} = \log \det g + F.$$
Hence, recalling that $g_{i\ov{j}} = g(e_i, \ov{e}_j) =\delta_{ij}$ near $x_0$,
$$\tilde{g}^{i\ov{j}} V_1 (\tilde{g}_{i\ov{j}}) = V_1(F),$$
and applying $V_1$ again,
\begin{equation} \label{blah1}
\begin{split}
\tilde{g}^{i\ov{i}} V_1 V_1 (\tilde{g}_{i\ov{i}}) = {} & \tilde{g}^{p\ov{p}} \tilde{g}^{q\ov{q}} | V_1(\tilde{g}_{p\ov{q}})|^2 + V_1 V_1(F),
\end{split}
\end{equation}
and then \eqref{l1lb} follows from (\ref{Llambda1}), (\ref{blah0}) and (\ref{ag}).

We now give the proof of (\ref{blah0}).  We have
\begin{equation}\label{got0}
\tilde{g}^{i\ov{i}} e_i \ov{e}_i (\varphi_{V_1 V_1})=\tilde{g}^{i\ov{i}} e_i \ov{e}_i  V_1V_1(\vp)-  \tilde{g}^{i\ov{i}} e_i \ov{e}_i (\nabla_{V_1}V_1)(\vp),
\end{equation}
since, for any $\alpha, \beta$,
$$\varphi_{V_{\alpha} V_{\beta}}=V_\alpha V_\beta(\vp)-(\nabla_{V_\alpha}V_\beta)(\vp).$$
First we deal with the second term of (\ref{got0}). We claim that
\begin{equation}\label{got1}
\left|\tilde{g}^{i\ov{i}} e_i \ov{e}_i (\nabla_{V_1}V_1)(\vp)\right|\leq C\lambda_1 \sum_i \ti{g}^{i\ov{i}}.
\end{equation}

For simplicity of notation let $W=\nabla_{V_1}V_1$. Then at $x_0$ we have, using (\ref{tildeg}),
\[\begin{split}
e_i \ov{e}_iW(\vp) = {} & e_i W\ov{e}_i(\vp)+e_i[\ov{e}_i,W](\vp) \\ = {} &
We_i\ov{e}_i(\vp)+[e_i,W]\ov{e}_i(\vp)+e_i[\ov{e}_i,W](\vp)\\
= {} & W(\ti{g}_{i\ov{i}})+W[e_i,\ov{e}_i]^{(0,1)}(\vp)+[e_i,W]\ov{e}_i(\vp)+e_i[\ov{e}_i,W](\vp).
\end{split}\]
Applying $W$ to \eqref{ma2} we have
$$\tilde{g}^{i\ov{i}} W (\tilde{g}_{i\ov{i}}) =   W(F),$$
and \eqref{got1} now follows from (\ref{stupid}) and (\ref{ag}).

Next we note that
\[\begin{split}
-\ti{g}^{i\ov{i}}[e_i, \ov{e}_i]^{(0,1)}(\vp_{V_1V_1})&=-\ti{g}^{i\ov{i}}[e_i, \ov{e}_i]^{(0,1)}V_1V_1(\vp)+\ti{g}^{i\ov{i}}[e_i, \ov{e}_i]^{(0,1)}(\nabla_{V_1}V_1)(\vp)\\
&\geq -\ti{g}^{i\ov{i}}[e_i, \ov{e}_i]^{(0,1)}V_1V_1(\vp)-C\lambda_1\sum_i \ti{g}^{i\ov{i}},
\end{split}\]
for a uniform constant $C$, and so combining this with (\ref{got0}), (\ref{got1}),
\begin{equation}\label{gott1}
\begin{split}
\tilde{g}^{i\ov{i}} e_i \ov{e}_i (\varphi_{V_1 V_1})
&-\ti{g}^{i\ov{i}}[e_i, \ov{e}_i]^{(0,1)}(\vp_{V_1V_1})\\
\geq {} &
\tilde{g}^{i\ov{i}} e_i \ov{e}_i V_1V_1(\vp)
-\ti{g}^{i\ov{i}}[e_i, \ov{e}_i]^{(0,1)}V_1V_1(\vp)
-C\lambda_1\sum_i \ti{g}^{i\ov{i}}.
\end{split}
\end{equation}
In what follows, we write $E$ for a term (which may change from line to line)  which can be bounded by $C\lambda_1\sum_i \tilde{g}^{i\ov{i}}$.  We have
\[
\begin{split}
\tilde{g}^{i\ov{i}} &\big\{ e_i \ov{e}_i V_1 V_1 (\varphi) - [e_i, \ov{e}_i]^{(0,1)} V_1 V_1(\varphi) \big\} \\
= {} & \tilde{g}^{i\ov{i}} \big\{ e_i V_1 \ov{e}_i V_1(\varphi) - e_i [V_1, \ov{e}_i] V_1(\varphi)   - V_1 [e_i, \ov{e}_i]^{(0,1)} V_1(\varphi) \big\} + E\\
= {} & \tilde{g}^{i\ov{i}} \big\{  V_1 e_i \ov{e}_i V_1(\varphi) + [e_i,V_1]\ov{e}_iV_1(\vp)-[V_1,\ov{e}_i]e_iV_1(\vp)  - V_1 V_1 [e_i, \ov{e}_i]^{(0,1)} (\varphi)   \big\} + E\\
= {} & \tilde{g}^{i\ov{i}} \big\{  V_1 e_i V_1 \ov{e}_i (\varphi) - V_1 e_i [V_1, \ov{e}_i] (\varphi) + [e_i,V_1]\ov{e}_iV_1(\vp)-[V_1,\ov{e}_i]e_iV_1(\vp) \\
& - V_1 V_1 [e_i, \ov{e}_i]^{(0,1)} (\varphi) \big\} + E\\
= {} & \tilde{g}^{i\ov{i}} \big\{  V_1 V_1 e_i \ov{e}_i (\varphi) - V_1 [ V_1, e_i] \ov{e}_i (\varphi) - V_1 e_i [V_1, \ov{e}_i] (\varphi) + [e_i,V_1]\ov{e}_iV_1(\vp) \\ & -[V_1,\ov{e}_i]e_iV_1(\vp)
 - V_1 V_1 [e_i, \ov{e}_i]^{(0,1)} (\varphi)  \big\} + E \\
= {} &\tilde{g}^{i\ov{i}} V_1 V_1 \big( e_i\ov{e}_i(\varphi) - [e_i, \ov{e}_i]^{(0,1)}(\varphi)\big)-2\tilde{g}^{i\ov{i}}[V_1,e_i]V_1\ov{e}_i(\vp)-2\tilde{g}^{i\ov{i}}[V_1,\ov{e}_i]V_1 e_i(\vp)+E.
\end{split}
\]
Recalling (\ref{tildeg}), and combining this with \eqref{gott1},  we obtain (\ref{blah0}).
\end{proof}

We use the above lemma to obtain  a lower bound for $L(\hat{Q})$ at $x_0$:

\begin{lemma}  \label{lemmae} For any $\ve$ with $0< \ve \le 1/2$, we have at $x_0$,
\begin{equation} \label{LhatQ4}
\begin{split}
0 \ge {} & L(\hat{Q}) \\ \ge {} & (2-\ve) \sum_{\alpha >1} \tilde{g}^{i\ov{i}} \frac{ |e_i (\varphi_{V_{\alpha} V_1})|^2}{\lambda_1(\lambda_1-\lambda_{\alpha})} + \frac{\tilde{g}^{p\ov{p}} \tilde{g}^{q\ov{q}} | V_1(\tilde{g}_{p\ov{q}})|^2}{\lambda_1}  \\ & {} - (1+\ve)\frac{\tilde{g}^{i\ov{i}} | e_i (\varphi_{V_1 V_1})|^2}{\lambda_1^2}
 + \frac{h'}{2} \sum_k \tilde{g}^{i\ov{i}} (| e_i e_k (\varphi)|^2 + |e_i \ov{e}_k(\varphi)|^2) \\ {} & + h'' \tilde{g}^{i\ov{i}} |e_i | \partial \varphi|^2_g|^2 + \left(Ae^{-A\vp}-\frac{C}{\ve}\right) \sum_i \tilde{g}^{i\ov{i}}  +A^2e^{-A\vp}\tilde{g}^{i\ov{i}} |e_i (\varphi)|^2 \\ & {} - Ane^{-A\vp}.
\end{split}
\end{equation}
\end{lemma}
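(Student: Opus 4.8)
The plan is to estimate $L(\hat Q)$ from below at $x_0$, where $\hat Q$ attains a local maximum and is smooth, so that \eqref{localmax} gives $0\ge L(\hat Q)(x_0)$. First I would expand $L(\hat Q)$ by the chain and product rules:
$$L(\log\lambda_1)=\frac{L(\lambda_1)}{\lambda_1}-\frac{\tilde g^{i\ov{i}}|e_i(\lambda_1)|^2}{\lambda_1^2},\qquad L(h(|\partial\varphi|^2_g))=h'\,L(|\partial\varphi|^2_g)+h''\,\tilde g^{i\ov{i}}\,|e_i(|\partial\varphi|^2_g)|^2,$$
and $L(e^{-A\varphi})=A^2e^{-A\varphi}\tilde g^{i\ov{i}}|e_i(\varphi)|^2+Ae^{-A\varphi}\sum_i\tilde g^{i\ov{i}}-Ane^{-A\varphi}$, where the last identity uses $L(\varphi)=\tilde g^{i\ov{j}}(\tilde g_{i\ov{j}}-g_{i\ov{j}})=n-\sum_i\tilde g^{i\ov{i}}$, which is immediate from \eqref{tildeg} and the diagonalization at $x_0$. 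From \eqref{formulae} together with the vanishing \eqref{fdgv} of the first derivatives of $g$ at $x_0$, one checks that $e_i(\lambda_1)=e_i(\varphi_{V_1V_1})$ at $x_0$, so the gradient term of $L(\log\lambda_1)$ is exactly $-\tilde g^{i\ov{i}}|e_i(\varphi_{V_1V_1})|^2/\lambda_1^2$.

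Next I would substitute the lower bound for $L(\lambda_1)$ from Lemma \ref{lemmalbl1} (divided by $\lambda_1$) and the lower bound for $L(|\partial\varphi|^2_g)$ from Lemma \ref{lemmapv}. Since $h'$ is bounded (see \eqref{proph}) and $\sum_i\tilde g^{i\ov{i}}\ge c>0$ by \eqref{ag}, the error terms $-C\lambda_1\sum_i\tilde g^{i\ov{i}}$ from Lemma \ref{lemmalbl1} (after division by $\lambda_1$) and $-C\sum_i\tilde g^{i\ov{i}}-C$ from Lemma \ref{lemmapv} all combine into a single $-C\sum_i\tilde g^{i\ov{i}}$, which will be absorbed into the $-\frac{C}{\ve}\sum_i\tilde g^{i\ov{i}}$ of the conclusion. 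The only genuinely delicate contributions are then the two commutator terms $-\frac{2}{\lambda_1}\tilde g^{i\ov{i}}[V_1,e_i]V_1\ov{e}_i(\varphi)$ and $-\frac{2}{\lambda_1}\tilde g^{i\ov{i}}[V_1,\ov{e}_i]V_1 e_i(\varphi)$ produced by Lemma \ref{lemmalbl1}, which involve third derivatives of $\varphi$ that are not a priori controlled.

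The key observation is that these commutator terms collapse onto the \emph{good} third-order quantities $e_i(\varphi_{V_\alpha V_1})$ already appearing in Lemma \ref{lemmalbl1}. Indeed, the vectors $V_1,\dots,V_{2n}$ were extended with constant components in the $x$-coordinates, hence pairwise commute and form a frame near $x_0$, while $[V_1,e_i]=\sum_\gamma(V_1e_i^\gamma)\partial_\gamma$ is a first-order operator with uniformly bounded coefficients, so $[V_1,e_i]=\sum_\alpha p_{i\alpha}V_\alpha$ with the $p_{i\alpha}$ bounded. Expanding any third-order expression $XYZ(\varphi)$ (with $X,Y,Z$ bounded vector fields) as $(\nabla^3\varphi)(X,Y,Z)$ modulo terms bounded by $C|\nabla^2\varphi|+C$, using that $\nabla^3\varphi$ is symmetric modulo a curvature term contracted with $\nabla\varphi$ (which is $O(1)$ by Proposition \ref{gradient estimate}), and using $|\nabla^2\varphi|\le C\lambda_1+C$ from \eqref{stupid}, one obtains
$$[V_1,e_i]V_1\ov{e}_i(\varphi)=\ov{e}_i\big((\nabla^2\varphi)([V_1,e_i],V_1)\big)+O(\lambda_1)=\sum_\alpha p_{i\alpha}\,\ov{e}_i(\varphi_{V_\alpha V_1})+O(\lambda_1),$$
and symmetrically with $e_i$ and $\ov{e}_i$ interchanged. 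Since $|\ov{e}_i(\varphi_{V_\alpha V_1})|=|e_i(\varphi_{V_\alpha V_1})|$, one can now apply Young's inequality: for $\alpha>1$ one absorbs $\frac{2}{\lambda_1}\tilde g^{i\ov{i}}|p_{i\alpha}|\,|e_i(\varphi_{V_\alpha V_1})|$ into the good term $\frac{2}{\lambda_1(\lambda_1-\lambda_\alpha)}\tilde g^{i\ov{i}}|e_i(\varphi_{V_\alpha V_1})|^2$ (using $0<\lambda_1-\lambda_\alpha\le C\lambda_1$, a consequence of \eqref{lapp2} and $\lambda_1(\Phi)=\lambda_1(\nabla^2\varphi)$ at $x_0$), at the cost of reducing its coefficient from $2$ to $2-\ve$ and producing a term $\frac{C}{\ve}\tilde g^{i\ov{i}}$; for $\alpha=1$ one absorbs into the bad term $-\tilde g^{i\ov{i}}|e_i(\varphi_{V_1V_1})|^2/\lambda_1^2$, worsening its coefficient from $1$ to $1+\ve$ and producing $\frac{C}{\ve}\tilde g^{i\ov{i}}$; and the $O(\lambda_1)$ remainders contribute $O(1)\sum_i\tilde g^{i\ov{i}}$. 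Collecting everything — the reduced good term, the untouched $\tilde g^{p\ov{p}}\tilde g^{q\ov{q}}|V_1(\tilde g_{p\ov{q}})|^2/\lambda_1$, the worsened bad term, the terms $\frac{h'}{2}\sum_k\tilde g^{i\ov{i}}(|e_ie_k(\varphi)|^2+|e_i\ov{e}_k(\varphi)|^2)$ and $h''\tilde g^{i\ov{i}}|e_i(|\partial\varphi|^2_g)|^2$, and the three terms of $L(e^{-A\varphi})$, with all $-C\sum_i\tilde g^{i\ov{i}}$ errors swept into $-\frac{C}{\ve}\sum_i\tilde g^{i\ov{i}}$ — gives exactly \eqref{LhatQ4}.

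I expect the main obstacle to be precisely this reorganization of the commutator terms: one must verify carefully that after all the required commutations of derivatives the only surviving third-order quantities are the $V_1$-direction derivatives $e_i(\varphi_{V_\alpha V_1})$ (so that they land on the matching weight $\tilde g^{i\ov{i}}$ and can be absorbed), keep track of $\ov{e}_i$ versus $e_i$, and check that the loss in the constants can be made an arbitrarily small $\ve$ affecting only the two coefficients explicitly displayed in \eqref{LhatQ4}. Everything else is a long but routine application of the product and chain rules and of Lemmas \ref{lemmalbl1} and \ref{lemmapv}.
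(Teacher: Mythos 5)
Your proposal is correct and follows essentially the same route as the paper: expand $L(\hat Q)$, insert Lemmas \ref{lemmalbl1} and \ref{lemmapv}, identify $e_i(\lambda_1)$ with $e_i(\varphi_{V_1V_1})$ at $x_0$ via \eqref{formulae} and \eqref{fdgv}, and absorb the commutator terms by writing $[V_1,e_i]=\sum_\alpha \nu_{i\alpha}V_\alpha$, reducing them modulo $O(\lambda_1)$ to the quantities $e_i(\varphi_{V_\alpha V_1})$, and applying Young's inequality with weights $\lambda_1-\lambda_\alpha$ (using \eqref{lapp2}). The only difference is presentational: you invoke the symmetry of $\nabla^3\varphi$ modulo curvature, while the paper commutes the vector fields directly, but these are the same computation.
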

\begin{proof}
Compute, using Lemma \ref{lemmalbl1}, Lemma \ref{lemmapv} and the inequalities (\ref{proph}) and (\ref{ag}),
\begin{equation} \label{LhatQ2}
\begin{split}
0 \ge {} & L(\hat{Q}) \\ = {} & \frac{L(\lambda_1)}{\lambda_1} - \frac{\tilde{g}^{i\ov{i}} | e_i (\lambda_1)|^2}{\lambda_1^2} + h'L (| \partial \varphi|^2_g) + h'' \tilde{g}^{i\ov{i}} |e_i | \partial \varphi|^2_g|^2 \\ & - Ae^{-A\vp} L(\varphi)
+A^2e^{-A\vp}\tilde{g}^{i\ov{i}} |e_i (\varphi)|^2 \\
\ge {} & 2 \sum_{\alpha >1} \tilde{g}^{i\ov{i}} \frac{ |e_i (\varphi_{V_{\alpha} V_1})|^2}{\lambda_1(\lambda_1-\lambda_{\alpha})} + \frac{\tilde{g}^{p\ov{p}} \tilde{g}^{q\ov{q}} | V_1(\tilde{g}_{p\ov{q}})|^2}{\lambda_1} - \frac{\tilde{g}^{i\ov{i}} | e_i (\varphi_{V_1 V_1})|^2}{\lambda_1^2} \\
{} & -2\frac{\tilde{g}^{i\ov{i}}[V_1,e_i]V_1\ov{e}_i(\vp)+\tilde{g}^{i\ov{i}}[V_1,\ov{e}_i]V_1 e_i(\vp)}{\lambda_1}\\
{} & + \frac{h'}{2} \sum_k \tilde{g}^{i\ov{i}} (| e_i e_k (\varphi)|^2 + |e_i \ov{e}_k(\varphi)|^2) + h'' \tilde{g}^{i\ov{i}} |e_i | \partial \varphi|^2_g|^2 \\ {} &+ (Ae^{-A\vp}-C) \sum_i \tilde{g}^{i\ov{i}}
+A^2e^{-A\vp}\tilde{g}^{i\ov{i}} |e_i (\varphi)|^2
 - Ane^{-A\vp}.
\end{split}
\end{equation}
We deal with the bad third order terms on the second line. At $x_0$ we can write
$$[V_1,e_i]=\sum_{\alpha=1}^{2n}\nu_{i\alpha}V_\alpha,$$
for some complex numbers $\nu_{i\alpha}$ which are uniformly bounded, hence
$$|[V_1,e_i]V_1\ov{e}_i(\vp)+[V_1,\ov{e}_i]V_1 e_i(\vp)|\leq C\sum_{\alpha=1}^{2n}|V_\alpha V_1e_i(\vp)|,$$
and
\[\begin{split}
V_\alpha V_1e_i(\vp)&=e_iV_\alpha V_1(\vp)+V_\alpha [V_1,e_i](\vp)+[V_\alpha,e_i]V_1(\vp)\\
&=e_i(\vp_{V_\alpha V_1})+e_i(\nabla_{V_\alpha}V_1)(\vp)+V_\alpha [V_1,e_i](\vp)+[V_\alpha,e_i]V_1(\vp),
\end{split}\]
and so
\[\begin{split}
 2&\frac{\tilde{g}^{i\ov{i}}[V_1,e_i]V_1\ov{e}_i(\vp)+\tilde{g}^{i\ov{i}}[V_1,\ov{e}_i]V_1 e_i(\vp)}{\lambda_1} \\
&\leq C\frac{\ti{g}^{i\ov{i}}|e_i(\vp_{V_1V_1})|}{\lambda_1}
+C\sum_{\alpha>1}\frac{\ti{g}^{i\ov{i}}|e_i(\vp_{V_\alpha V_1})|}{\lambda_1}+C\sum_i \ti{g}^{i\ov{i}}\\
&\leq \ve\frac{\tilde{g}^{i\ov{i}} | e_i (\varphi_{V_1 V_1})|^2}{\lambda_1^2}+ \ve \sum_{\alpha >1} \tilde{g}^{i\ov{i}} \frac{ |e_i (\varphi_{V_{\alpha} V_1})|^2}{\lambda_1(\lambda_1-\lambda_{\alpha})}+\frac{C}{\ve}\sum_i \ti{g}^{i\ov{i}}+\frac{C}{\ve}\sum_i\ti{g}^{i\ov{i}}\sum_{\alpha>1}\frac{\lambda_1-\lambda_\alpha}{\lambda_1}\\
&\leq \ve\frac{\tilde{g}^{i\ov{i}} | e_i (\varphi_{V_1 V_1})|^2}{\lambda_1^2}+ \ve \sum_{\alpha >1} \tilde{g}^{i\ov{i}} \frac{ |e_i (\varphi_{V_{\alpha} V_1})|^2}{\lambda_1(\lambda_1-\lambda_{\alpha})}+\frac{C}{\ve}\sum_i \ti{g}^{i\ov{i}},
\end{split}\]
where we used \eqref{lapp2}. Thus the lemma follows from (\ref{LhatQ2}).
\end{proof}

We need to deal with the ``bad'' negative term
\begin{equation} \label{badterm}
- (1+\ve)\frac{\tilde{g}^{i\ov{i}} | e_i (\varphi_{V_1 V_1})|^2}{\lambda_1^2}
\end{equation}
in (\ref{LhatQ4}).
We split up into different cases.  The constant $\ve>0$  will be different in
  each case.

\bigskip
\noindent
{\bf Case 1.} \ Assume that, at $x_0$,
\begin{enumerate}
\item[(a)] $\ti{g}_{1\ov{1}} < A^3e^{-2A\vp}\ti{g}_{n\ov{n}}$, \ {\bf or}

\vspace{3pt}

\item[(b)] we have
$$ \frac{h'}{4} \sum_k \tilde{g}^{i\ov{i}} (| e_i e_k (\varphi)|^2 + |e_i \ov{e}_k(\varphi)|^2) > 6 (\sup_M | \partial \varphi|^2_g )A^2 e^{-2A\vp}\sum_i \tilde{g}^{i\ov{i}}.$$
\end{enumerate}

\bigskip

In this case we just choose $\ve=\frac{1}{2}$.
We use the fact that, at $x_0$, the first derivative of $\hat{Q}$ vanishes.  Hence, using the elementary inequality $|a+b|^2 \le 4|a|^2 + \frac{4}{3}|b|^2$, for $a, b \in \mathbb{C}$,
\begin{equation} \label{3o2}
\begin{split}
- \frac{3}{2}\frac{\tilde{g}^{i\ov{i}} |e_i (\varphi_{V_1 V_1})|^2}{\lambda_1^2} = {} & - \frac{3}{2}\tilde{g}^{i\ov{i}} | Ae^{-A\vp} e_i(\varphi) - h' e_i (| \partial \varphi|^2_g)|^2 \\
\ge {} & - 6 (\sup_M | \partial \varphi|^2_g )A^2e^{-2A\vp} \sum_i \tilde{g}^{i\ov{i}} \\ &  - 2 (h')^2 \tilde{g}^{i\ov{i}} |e_i | \partial \varphi|^2_g|^2.
\end{split}
\end{equation}

Hence from (\ref{LhatQ4}), discarding some positive terms, we obtain at $x_0$,
\[
\begin{split}
0 \ge {} &   - 6 (\sup_M | \partial \varphi|^2_g )A^2 e^{-2A\vp}\sum_i \tilde{g}^{i\ov{i}} + (h'' - 2 (h')^2) \tilde{g}^{i\ov{i}} |e_i | \partial \varphi|^2_g|^2 \\
{} & + \frac{h'}{2} \sum_k \tilde{g}^{i\ov{i}} (| e_i e_k (\varphi)|^2 + |e_i \ov{e}_k(\varphi)|^2)
 + (Ae^{-A\vp}-C) \sum_i \tilde{g}^{i\ov{i}} - Ane^{-A\vp}.
\end{split}
\]
But from (\ref{proph}), we have $h'' = 2 (h')^2$.  In case (a) note that all the numbers $\ti{g}_{i\ov{i}}, 1\leq i\leq n,$ are comparable to each other up to a uniform constant (which depends on $A$). Using again \eqref{ag}, as well as Propositions \ref{uniform estimate} and \ref{gradient estimate}, we have
$$0 \ge \frac{h'}{2} \sum_k \tilde{g}^{i\ov{i}} (| e_i e_k (\varphi)|^2 + |e_i \ov{e}_k(\varphi)|^2) - C_A\sum_i \ti{g}^{i\ov{i}},$$
for a uniform constant $C_A$ (depending on the uniform constant $A$), and since all the $\ti{g}^{i\ov{i}}$ are comparable to each other this gives
\begin{equation}\label{nabb}
\sum_{i,k} (| e_i e_k (\varphi)|^2 + |e_i \ov{e}_k(\varphi)|^2)\leq C_A.
\end{equation}
But at $x_0$ we can define the complex covariant derivatives $\vp_{e_i \ov{e}_k}$ and $\vp_{e_i e_k}$ in the obvious way, which satisfy
$$\varphi_{e_i \ov{e}_k}-e_i\ov{e}_k(\vp)=-(\nabla_{e_i}\ov{e}_k)(\vp),\quad \varphi_{e_i e_k}-e_i e_k(\vp)=-(\nabla_{e_i} e_k)(\vp),$$
and these differences are uniformly bounded thanks to Proposition \ref{gradient estimate}.
Therefore
\begin{equation}\label{nabb3}
\sum_{i,k} (| \vp_{e_i e_k}|^2 + |\vp_{e_i \ov{e}_k}|^2)\leq C_A,
\end{equation}
and recalling \eqref{frame}, we see that
\begin{equation}\label{nabb2}
\sum_{\alpha,\beta}|\nabla^2_{\alpha\beta}\vp|\leq C_A,
\end{equation}
and so $\lambda_1(x_0)$ is uniformly bounded.
Thanks to Propositions \ref{uniform estimate} and \ref{gradient estimate},
this shows that $Q$ is bounded from above at $x_0$, and hence everywhere.

On the other hand, in case (b), we obtain
\begin{equation} \label{A2}
0 \ge \frac{h'}{4} \sum_k \tilde{g}^{i\ov{i}} (| e_i e_k (\varphi)|^2+  |e_i \ov{e}_k(\varphi)|^2) + (A-C_0) \sum_i \tilde{g}^{i\ov{i}} - Ane^{-A\vp},
\end{equation}
for a uniform $C_0$.  Then as long as
\begin{equation} \label{A1}
A \ge C_0+1,
\end{equation}
we see that at $x_0$ we have $\sum_i\tilde{g}^{i\ov{i}}\leq C$, and so also
\begin{equation}\label{dumb}
\tr{g}{\ti{g}}\leq (\tr{\ti{g}}{g})^{n-1}\frac{\ti{\omega}^n}{\omega^n}\leq C.
\end{equation}
Therefore at $x_0$ we have that $\ti{g}$ is uniformly equivalent to $g$, and \eqref{A2} then shows that \eqref{nabb} holds again, and so $Q$ is bounded from above at $x_0$.  This completes Case 1.

\bigskip
\noindent
{\bf Case 2.} Neither (a) nor (b) of Case 1 hold.
\bigskip

This is the difficult case, and it will take up the rest of this section.  Let
$$I=\{i\in \{1,\dots,n\}\ |\ \ti{g}_{i\ov{i}}(x_0)\geq A^3e^{-2A\vp(x_0)}\ti{g}_{n\ov{n}}(x_0)\}.$$
We have $n\not\in I$ (since $A>1$), and $1\in I$ (otherwise we are  in part (a) of Case 1).
So for example when $n=2$ we have $I=\{1\}$.

We will use four lemmas to complete the proof.  The first deals with the relatively harmless part of the bad term (\ref{badterm}).

\begin{lemma} \label{lemmauno} Assuming that $A$ is larger than $6n\sup_M | \partial \varphi|^2_g$, we have
\begin{equation}\label{done0}
-(1+\ve)\sum_{i\in I}\frac{\tilde{g}^{i\ov{i}} | e_i (\varphi_{V_1 V_1})|^2}{\lambda_1^2}
 \geq -\sum_i \ti{g}^{i\ov{i}}- 2 (h')^2 \sum_{i\in I}\tilde{g}^{i\ov{i}} |e_i | \partial \varphi|^2_g|^2.
\end{equation}
\end{lemma}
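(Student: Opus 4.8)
The plan is to exploit the critical-point equation for $\hat Q$ at $x_0$ exactly as in the Case~1 computation \eqref{3o2}, but now summed only over the indices $i\in I$, and then to absorb all resulting error terms into the two terms on the right-hand side of \eqref{done0}. Concretely, since $d\hat Q(x_0)=0$ we have $e_i(\lambda_1)/\lambda_1 = A e^{-A\vp}e_i(\vp) - h' e_i(|\partial\vp|^2_g)$ for each $i$, and since $e_i(\lambda_1)=e_i(\vp_{V_1V_1})$ at $x_0$ (because $\lambda_1^{\alpha\beta}=V_1^\alpha V_1^\beta$ and the first derivatives of $g$ vanish, so $e_i(\Phi^\alpha_{\ \beta})=e_i(\vp_{\alpha\beta})$ there), we get
$$
-(1+\ve)\sum_{i\in I}\frac{\tilde g^{i\ov i}|e_i(\vp_{V_1V_1})|^2}{\lambda_1^2}
= -(1+\ve)\sum_{i\in I}\tilde g^{i\ov i}\bigl|Ae^{-A\vp}e_i(\vp) - h' e_i(|\partial\vp|^2_g)\bigr|^2.
$$
Applying the elementary inequality $|a+b|^2\le (1+\delta)|a|^2 + (1+\delta^{-1})|b|^2$ with $\delta$ chosen so that $(1+\ve)(1+\delta^{-1})\le 2$ (possible since $\ve\le 1/2$, e.g.\ $\delta=3$), the $h'$-term is controlled by $2(h')^2\sum_{i\in I}\tilde g^{i\ov i}|e_i(|\partial\vp|^2_g)|^2$, which is exactly the last term allowed on the right of \eqref{done0}.

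The remaining piece is $-(1+\ve)(1+\delta)A^2 e^{-2A\vp}\sum_{i\in I}\tilde g^{i\ov i}|e_i(\vp)|^2$, and here I would use the gradient bound from Proposition~\ref{gradient estimate}: $|e_i(\vp)|^2\le |\partial\vp|^2_g\le \sup_M|\partial\vp|^2_g$, so this is bounded below by $-(1+\ve)(1+\delta)A^2 e^{-2A\vp}(\sup_M|\partial\vp|^2_g)\sum_{i\in I}\tilde g^{i\ov i}$. With the choices above $(1+\ve)(1+\delta)\le 6$, so this term is $\ge -6A^2 e^{-2A\vp}(\sup_M|\partial\vp|^2_g)\sum_{i\in I}\tilde g^{i\ov i}$. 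The point now is that $|I|\le n$, so $\sum_{i\in I}\tilde g^{i\ov i}\le n\max_i\tilde g^{i\ov i}$... more simply $\sum_{i\in I}\tilde g^{i\ov i}\le \sum_i \tilde g^{i\ov i}$, but that is not quite enough on its own; instead I want the coefficient $6A^2 e^{-2A\vp}\sup_M|\partial\vp|^2_g$ times $\sum_{i\in I}\tilde g^{i\ov i}$ to be $\le \sum_i\tilde g^{i\ov i}$. This is where the hypothesis on $A$ enters: we are told $A$ is larger than $6n\sup_M|\partial\vp|^2_g$, and... hmm, that controls $6n\sup$ but we have $A^2 e^{-2A\vp}$. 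Actually the honest route is: since $\sup_M\vp=0$ and $\vp(x_0)\le 0$ we have $e^{-2A\vp(x_0)}\ge 1$, so this term cannot be absorbed by a single power of $A$ against $\sum\tilde g^{i\ov i}$ naively — so the real mechanism must instead be that for $i\in I$ one has $\tilde g^{i\ov i}\le \tilde g^{-1}_{1\ov 1}$ is tiny, or rather that $\tilde g_{i\ov i}\ge A^3 e^{-2A\vp}\tilde g_{n\ov n}$ forces $\tilde g^{i\ov i}\le A^{-3}e^{2A\vp}\tilde g^{n\ov n}\le A^{-3}e^{2A\vp}\sum_j\tilde g^{j\ov j}$, whence $6A^2 e^{-2A\vp}(\sup|\partial\vp|^2)\tilde g^{i\ov i}\le 6A^{-1}(\sup|\partial\vp|^2)\sum_j\tilde g^{j\ov j}\le \tfrac1n\sum_j\tilde g^{j\ov j}$ provided $A\ge 6n\sup_M|\partial\vp|^2_g$. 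Summing over the at most $n$ indices in $I$ gives $\le \sum_j\tilde g^{j\ov j}$, which is precisely the first term on the right of \eqref{done0}.

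So the steps in order are: (1) invoke $d\hat Q(x_0)=0$ and the identity $e_i(\lambda_1)=e_i(\vp_{V_1V_1})$ at $x_0$; (2) expand the square and split off the $h'$-part using $|a+b|^2\le(1+\delta)|a|^2+(1+\delta^{-1})|b|^2$ with $\delta=3$, absorbing it into $2(h')^2\sum_{i\in I}\tilde g^{i\ov i}|e_i|\partial\vp|^2_g|^2$; (3) bound the leftover $A^2e^{-2A\vp}$-term using $|e_i(\vp)|^2\le\sup_M|\partial\vp|^2_g$ and then the defining inequality of $I$, namely $\tilde g^{i\ov i}\le A^{-3}e^{2A\vp(x_0)}\sum_j\tilde g^{j\ov j}$ for $i\in I$, to convert $6A^2e^{-2A\vp}(\sup|\partial\vp|^2_g)\tilde g^{i\ov i}$ into $\le 6A^{-1}(\sup|\partial\vp|^2_g)\sum_j\tilde g^{j\ov j}$; (4) use $A\ge 6n\sup_M|\partial\vp|^2_g$ and $|I|\le n$ to conclude the total is $\ge -\sum_j\tilde g^{j\ov j}$. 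The main obstacle — and the whole reason $I$ was defined the way it was — is step (3): the coefficient $A^2e^{-2A\vp(x_0)}$ is genuinely large and cannot be beaten by the explicit power of $A$ alone, so one must use that on the indices in $I$ the inverse metric coefficient $\tilde g^{i\ov i}$ carries a compensating factor $A^{-3}e^{2A\vp(x_0)}$; everything else is bookkeeping with elementary inequalities.
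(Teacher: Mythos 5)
Your proposal is correct and follows essentially the same route as the paper: use $d\hat{Q}(x_0)=0$ together with $e_i(\lambda_1)=e_i(\varphi_{V_1V_1})$, split the square so the $h'$-part is absorbed into $2(h')^2\sum_{i\in I}\tilde{g}^{i\ov{i}}|e_i|\partial\varphi|^2_g|^2$, and then beat the large factor $A^2e^{-2A\varphi}$ using the defining inequality of $I$, which gives $\tilde{g}^{i\ov{i}}\le A^{-3}e^{2A\varphi}\tilde{g}^{n\ov{n}}$ for $i\in I$, so the hypothesis $A\ge 6n\sup_M|\partial\varphi|^2_g$ closes the estimate. The correct identification of this last mechanism is the whole point of the lemma, and you have it.
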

\begin{proof} Using the fact that the first derivative of $\hat{Q}$ vanishes at $x_0$, as in (\ref{3o2}), we compute
\begin{equation}
\begin{split}
\lefteqn{-(1+\ve)\sum_{i\in I}\frac{\tilde{g}^{i\ov{i}} | e_i (\varphi_{V_1 V_1})|^2}{\lambda_1^2} } \\&=-(1+\ve) \sum_{i\in I}\tilde{g}^{i\ov{i}} | A e^{-A\vp}e_i(\varphi) - h' e_i (| \partial \varphi|^2_g)|^2 \\
 &\ge - 6 (\sup_M | \partial \varphi|^2_g )A^2e^{-2A\vp} \sum_{i\in I} \tilde{g}^{i\ov{i}}  - 2 (h')^2 \sum_{i\in I}\tilde{g}^{i\ov{i}} |e_i | \partial \varphi|^2_g|^2\\
 &\geq -\frac{6n (\sup_M | \partial \varphi|^2_g ) \ti{g}^{n\ov{n}}}{A}- 2 (h')^2 \sum_{i\in I}\tilde{g}^{i\ov{i}} |e_i | \partial \varphi|^2_g|^2\\
 &\geq -\sum_i \ti{g}^{i\ov{i}}- 2 (h')^2 \sum_{i\in I}\tilde{g}^{i\ov{i}} |e_i | \partial \varphi|^2_g|^2,
\end{split}
\end{equation}
where in the last line we used the assumption $A \ge 6n\sup_M | \partial \varphi|^2_g$.
\end{proof}

The next lemma shows that, roughly speaking, and modulo terms of order $O(\lambda_1^{-1})$, the largest eigenvector $V_1$ lies in the directions where the Hermitian metric $(\tilde{g}_{i\ov{i}})$ at $x_0$ is not too small.  More precisely, we define in our coordinate patch a $(1,0)$ vector field by $$\ti{e}_1:=\frac{1}{\sqrt{2}}(V_1-\mn JV_1).$$
We write at $x_0$,
\begin{equation} \label{fix}
\ti{e}_1=\sum_{q=1}^n \nu_qe_q,\quad \sum_{q=1}^n|\nu_q|^2=1,
\end{equation}
for complex numbers $\nu_1, \ldots, \nu_n$, where the second equation follows from the fact that $\ti{e}_1$ is $g$-unit at $x_0$.
Then we have:

\begin{lemma} \label{lemmados}  For a uniform constant $C_A$ depending on $A$, we have
$$| \nu_q | \le \frac{C_A}{\lambda_1}, \quad \textrm{for all } q \notin I,$$
where the $\nu_q$ are given by (\ref{fix}).
\end{lemma}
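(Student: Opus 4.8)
The plan is to extract the needed bound directly from the inequality in Lemma \ref{lemmae}, keeping track of the sizes of the quantities $\tilde g_{i\bar i}$ rather than discarding terms. The key observation is that $\varphi_{V_1 V_1} = \nabla^2\varphi(V_1,V_1) = \lambda_1(\nabla^2\varphi) \ge \lambda_1(\Phi) = \lambda_1$, but more usefully, $\varphi_{V_1 V_1}$ is comparable to $\lambda_1$ up to a bounded error: indeed $\lambda_1(\nabla^2\varphi) - 1 \le \lambda_1(\Phi) \le \lambda_1(\nabla^2\varphi)$ near $x_0$, so $|\varphi_{V_1 V_1} - \lambda_1| \le C$. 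The vector $\tilde e_1 = \tfrac{1}{\sqrt2}(V_1 - \sqrt{-1}JV_1)$ is the $(1,0)$-part of $V_1$, and in the orthonormal frame the real Hessian entry $\varphi_{V_1 V_1}$ is closely related to $H(\varphi)(\tilde e_1,\overline{\tilde e_1})$; writing $\tilde e_1 = \sum_q \nu_q e_q$, the $J$-invariant part of $\nabla^2\varphi$ in the direction $V_1$ is, up to bounded error terms from $E(v)$-type corrections (controlled by Proposition \ref{gradient estimate}) and the off-diagonal entries, governed by $\sum_q |\nu_q|^2 (\tilde g_{q\bar q} - \delta_{qq})$. Since $\tilde g_{i\bar j}$ is diagonal at $x_0$ and $\varphi_{V_1 V_1} \approx \lambda_1$, I would first establish the inequality
$$\sum_{q=1}^n |\nu_q|^2 \,\tilde g_{q\bar q} \ge c\,\lambda_1 - C$$
for uniform $c>0$; this uses that $V_1$ (and hence $\tilde e_1$) realizes the largest eigenvalue of the real Hessian, together with the relation \eqref{Jhess}–\eqref{ratta} between the real and $J$-Hermitian Hessians and the gradient bound.

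Next, since $q \notin I$ means $\tilde g_{q\bar q} < A^3 e^{-2A\varphi(x_0)}\tilde g_{n\bar n}$, and since we are in Case 2 where (a) of Case 1 fails — so $\tilde g_{1\bar 1} \ge A^3 e^{-2A\varphi}\tilde g_{n\bar n}$, hence $\tilde g_{1\bar 1} \ge \tilde g_{q\bar q}$ for $q \notin I$ — I want to bound the contribution of the indices $q \notin I$ to $\sum_q |\nu_q|^2 \tilde g_{q\bar q}$. The idea is to use the equation $\det \tilde g = e^F$: since $\tilde g_{n\bar n}$ is the smallest eigenvalue, $\tilde g_{n\bar n} \le C (\prod_i \tilde g_{i\bar i})^{1/n} \cdot (\text{something})$ — more directly, from $\sum_i \tilde g^{i\bar i}$ appearing on the right of Lemma \ref{lemmae} and from \eqref{ag}, and from the fact that $\tilde g^{n\bar n}$ is the largest of the $\tilde g^{i\bar i}$, one controls $\tilde g^{n\bar n}$. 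Then for $q \notin I$, $\tilde g^{q\bar q} > (A^3 e^{-2A\varphi})^{-1}\tilde g^{n\bar n}$, i.e. $\tilde g^{q\bar q}$ is \emph{large}, which feeds into the term $\frac{h'}{2}\sum_k \tilde g^{i\bar i}(|e_ie_k(\varphi)|^2 + |e_i\bar e_k(\varphi)|^2)$ and the term $A^2 e^{-A\varphi}\tilde g^{i\bar i}|e_i(\varphi)|^2$ — but these don't directly see $\nu_q$. The cleaner route: combine the lower bound $\sum_q |\nu_q|^2 \tilde g_{q\bar q} \ge c\lambda_1 - C$ with the trivial upper bound $\sum_{q \in I}|\nu_q|^2 \tilde g_{q\bar q} \le \tilde g_{1\bar 1}$ (using $\tilde g_{1\bar 1} \ge \tilde g_{q\bar q}$ for all $q$, as $\tilde g_{1\bar 1}$ is the largest). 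This does not immediately isolate $q \notin I$; instead I would argue that $\sum_{q\notin I}|\nu_q|^2\tilde g_{q\bar q} \le A^3 e^{-2A\varphi}\tilde g_{n\bar n}$, and then show $\tilde g_{n\bar n}\,\lambda_1^2$ is bounded — equivalently $\tilde g^{n\bar n} \ge c\lambda_1^{-2}\cdot(\text{bounded below})$ is \emph{false}, so in fact one needs the reverse: use Lemma \ref{lemmae} to show that if some $\nu_q$ with $q\notin I$ were larger than $C_A/\lambda_1$, the bad term \eqref{badterm} could not be absorbed.

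Concretely, I expect the argument to run by contradiction against the maximum principle inequality $0 \ge L(\hat Q)$ in Lemma \ref{lemmae}: the good third-order term $2\sum_{\alpha>1}\tilde g^{i\bar i}\frac{|e_i(\varphi_{V_\alpha V_1})|^2}{\lambda_1(\lambda_1-\lambda_\alpha)}$ together with $\frac{h'}{2}\sum_k \tilde g^{i\bar i}(|e_ie_k(\varphi)|^2 + \cdots)$ must dominate the leftover negative pieces; examining the $i=n$ (or $i\notin I$) slice where $\tilde g^{i\bar i}$ is enormous forces the corresponding covariant derivatives of $\varphi$ in those directions to be tiny, which — via the relation between $V_1$ and $\tilde e_1 = \sum \nu_q e_q$ and the fact that $\varphi_{V_1V_1}\approx\lambda_1$ — pins down $|\nu_q|$. \textbf{The main obstacle} I anticipate is algebraically disentangling the contribution of the directions $q\notin I$ inside $\varphi_{V_1V_1} = \nabla^2\varphi(V_1,V_1)$: because $V_1$ is a \emph{real} eigenvector, the identity $\varphi_{V_1V_1} = 2\,\mathrm{Re}\,H(\varphi)(\tilde e_1,\tilde e_1) + H(\varphi)(\tilde e_1,\overline{\tilde e_1})$ (up to the gradient-controlled error of \eqref{Jhess}) mixes the $(1,1)$ part $\sum_q|\nu_q|^2\tilde g_{q\bar q}$ with a $(2,0)$ part $\sum_{q,r}\nu_q\nu_r(\tilde e_1$-direction second derivatives$)$, and one must check the $(2,0)$ contribution is lower order — here Case 2's failure of (b), which bounds $\frac{h'}{4}\sum_k\tilde g^{i\bar i}(|e_ie_k(\varphi)|^2 + |e_i\bar e_k(\varphi)|^2) \le 6(\sup|\partial\varphi|^2)A^2e^{-2A\varphi}\sum_i\tilde g^{i\bar i}$, is exactly what is needed to control those mixed second derivatives weighted against the large $\tilde g^{q\bar q}$, $q\notin I$. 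Tracking the dependence of $C_A$ on $A$ through these estimates, while ensuring no circularity with the eventual choice of $A$, is the delicate bookkeeping step.
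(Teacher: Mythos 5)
Your proposal correctly isolates the right starting input: in Case 2 the negation of condition (b), combined with the fact that $\tilde{g}^{i\ov{i}} > (A^3e^{-2A\varphi})^{-1}\tilde{g}^{n\ov{n}}$ for $i \notin I$, forces $\sum_k\sum_{i\notin I}(|e_ie_k(\varphi)|^2+|e_i\ov{e}_k(\varphi)|^2) \le C_A$, hence a $C_A$-bound on all real Hessian entries $\nabla^2_{\alpha\beta}\varphi$ with $\alpha$ in a real direction corresponding to $q\notin I$. This is exactly how the paper begins. But the mechanism you propose for converting this into $|\nu_q|\le C_A/\lambda_1$ does not work, for two reasons. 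First, your preliminary inequality $\sum_q|\nu_q|^2\tilde{g}_{q\ov{q}} \ge c\lambda_1 - C$ is false in general: the left-hand side is $\tilde{g}_{\tilde 1\ov{\tilde 1}}$, and Case (ii) of Lemma \ref{lemmacuatro} is precisely the scenario (which the paper must treat separately) where $\tilde{g}_{\tilde 1\ov{\tilde 1}}$ stays uniformly bounded while $\lambda_1\to\infty$, because $\varphi_{JV_1JV_1}$ can be very negative; the identity $\varphi_{V_1V_1}\approx\lambda_1$ only controls the sum of the $(1,1)$ and $(2,0)$ contributions, and the $(2,0)$ part is \emph{not} lower order. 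Second, even where such a Rayleigh-quotient argument can be salvaged, it is structurally too weak: writing $V_1=v'+v''$ with $v''$ the projection onto the non-$I$ directions and using only the scalar relation $\nabla^2\varphi(V_1,V_1)=\lambda_1$ together with the $C_A$-bound on the mixed entries yields $\lambda_1|v''|^2 \le C_A|v''|+C$, i.e. $|v''|\lesssim \lambda_1^{-1/2}$, not the asserted $\lambda_1^{-1}$.

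The missing idea is to use the eigenvector equation componentwise rather than the eigenvalue (Rayleigh quotient) alone. Since $V_1$ is a unit eigenvector of the endomorphism $\Phi$ of \eqref{definePhi} with eigenvalue $\lambda_1$, one has $V_{\ 1}^\alpha = \lambda_1^{-1}\sum_\beta \Phi^\alpha_{\ \beta}V_{\ 1}^\beta$ for each $\alpha$; the second-derivative bound above gives $|\Phi^\alpha_{\ \beta}|\le C_A$ for every row $\alpha$ in a non-$I$ direction, and $|V_{\ 1}^\beta|\le 1$, so $|V_{\ 1}^\alpha|\le C_A/\lambda_1$ for those $\alpha$, which is the lemma. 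Without this step your argument, as written, does not close, and the weaker rate $\lambda_1^{-1/2}$ would at minimum require reworking the error absorption in Lemma \ref{lemmatres}.
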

\begin{proof}
To prove the claim, we see from
$$ \frac{h'}{4} \sum_k \tilde{g}^{i\ov{i}} (| e_i e_k (\varphi)|^2 + |e_i \ov{e}_k(\varphi)|^2) \le 6 (\sup_M | \partial \varphi|^2_g )A^2e^{-2A\vp} \sum_i \tilde{g}^{i\ov{i}},$$
that
$$ \frac{h'}{4} \sum_k \sum_{i \notin I} \tilde{g}^{i\ov{i}} (| e_i e_k (\varphi)|^2 + |e_i \ov{e}_k(\varphi)|^2) \le 6n  (\sup_M | \partial \varphi|^2_g )A^2e^{-2A\vp}  \tilde{g}^{n\ov{n}}.$$
Hence by the definition of $I$,
\begin{equation} \label{sumki}
\sum_k \sum_{i \notin I} (| e_i e_k (\varphi)|^2 + |e_i \ov{e}_k(\varphi)|^2) \le  \frac{24n A^5 e^{-4A  \varphi}}{h'} \sup_M | \partial \varphi|^2_g.
\end{equation}
For convenience, write $I = \{ 1, 2, \ldots, j \}$ for some $j$ with $1 \leq j < n$.  Then, arguing exactly as in \eqref{nabb}, \eqref{nabb3}, \eqref{nabb2}, the estimate (\ref{sumki})  implies that
\begin{equation} \label{nablas}
\sum_{\alpha = 2j+1}^{2n} \sum_{\beta = 1}^{2n} | \nabla^2_{\alpha \beta} \varphi | \le C_A,
\end{equation}
where we write $C_A$ for a uniform constant depending on $A$ (recall that $A$ is yet to be determined, but will be chosen uniformly).

In terms of the matrix $\Phi^\alpha_{\ \, \beta}$, defined by (\ref{definePhi}), the inequality (\ref{nablas}) implies that at $x_0$,
 $$| \Phi^{\alpha}_{\ \, \beta} | \le C_A, \qquad \textrm{whenever } 2j+1 \le \alpha \le 2n, \ 1 \le \beta \le 2n.$$
Since $V_1 = ( V_{\ \, 1}^{\alpha})_{\alpha=1}^{2n}$ is a unit eigenvector for $\Phi$ with eigenvalue $\lambda_1$ we have $\Phi (V_1) = \lambda_1 V_1$ and hence
$$|V_{ \ \, 1}^{\alpha}| = \left| \frac{1}{\lambda_1} \sum_{\beta=1}^{2n} \Phi^{\alpha}_{\ \, \beta}V_{\ \, 1}^{\beta} \right| \le \frac{C_A}{\lambda_1}, \quad \textrm{for } 2j+1 \le \alpha \le 2n.$$
The lemma then follows easily from the definition of the $e_i$.
\end{proof}

The goal for our last two lemmas is to obtain a lower bound for the first three terms on the right hand side of (\ref{LhatQ4}), excluding those bounded by Lemma \ref{lemmauno}.   The first of these two lemmas is a technical intermediate step.  We make use of real numbers $\mu_2, \mu_3, \ldots, \mu_{2n}$ defined by
\begin{equation} \label{defnmu}
JV_1=\sum_{\alpha>1} \mu_\alpha V_\alpha,\quad \sum_{\alpha>1}\mu_\alpha^2=1, \quad \textrm{at } x_0,
\end{equation}
where we recall that at $x_0$ the vector $JV_1$ is $g$-unit and $g$-orthogonal to $V_1$.

\begin{lemma} \label{lemmatres} For any $\gamma>0$,   at $x_0$,
\[
\begin{split}
(2-\ve) \sum_{\alpha >1} &\tilde{g}^{i\ov{i}} \frac{ |e_i (\varphi_{V_{\alpha} V_1})|^2}{\lambda_1(\lambda_1-\lambda_{\alpha})} + \frac{\tilde{g}^{p\ov{p}} \tilde{g}^{q\ov{q}} | V_1(\tilde{g}_{p\ov{q}})|^2}{\lambda_1} -(1+\ve)\sum_{i\not\in I}\frac{\tilde{g}^{i\ov{i}} | e_i (\varphi_{V_1 V_1})|^2}{\lambda_1^2}\\
&\geq
\sum_{i\not\in I}\sum_{\alpha >1} \frac{\tilde{g}^{i\ov{i}}}{\lambda_1^2}\left(\frac{(2-\ve)\lambda_1}{\lambda_1-\lambda_{\alpha}} |e_i (\varphi_{V_{\alpha} V_1})|^2\right)
+2\sum_{q\in I}\sum_{i\not\in I}\tilde{g}^{i\ov{i}}\tilde{g}^{q\ov{q}}\frac{| V_1(\tilde{g}_{i\ov{q}})|^2}{\lambda_1}\\
&-3\ve \sum_{i\not\in I}\frac{\tilde{g}^{i\ov{i}} | e_i (\varphi_{V_1 V_1})|^2}{\lambda_1^2}
-(1-\ve)(1+\gamma)\ti{g}_{\ti{1}\ov{\ti{1}}}\sum_{q\in I}\sum_{i\not\in I}\frac{2\tilde{g}^{i\ov{i}}\ti{g}^{q\ov{q}}}{\lambda_1^2} |V_1(\ti{g}_{i\ov{q}})|^2\\
&-\frac{C}{\ve}\sum_{i}\tilde{g}^{i\ov{i}}-(1-\ve)\left(1+\frac{1}{\gamma}\right)\left(\lambda_1-\sum_{\alpha>1}\lambda_\alpha\mu_\alpha^2\right)
\sum_{i\not\in I}\sum_{\alpha>1}\frac{\tilde{g}^{i\ov{i}}}{\lambda_1^2}\frac{|e_i(\vp_{V_\alpha V_1})|^2}{\lambda_1-\lambda_\alpha},
\end{split}
\]
assuming without loss of generality that at $x_0$,  $\lambda_1 \ge C_A/\ve$ for a uniform $C_A>0$ depending on $A$.  Here we are writing $\ti{g}_{\ti{1}\ov{\ti{1}}}:=\ti{g}(\ti{e}_1,\ov{\ti{e}}_1)$.
\end{lemma}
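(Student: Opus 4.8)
The plan is to expand $e_i(\vp_{V_1V_1})$ for $i\notin I$ as a linear combination of the ``good'' third order quantities $V_1(\ti g_{i\ov q})$ (with $q\in I$) and $e_i(\vp_{V_\alpha V_1})$ (with $\alpha>1$) appearing in the first two terms on the right hand side of \eqref{l1lb}, plus errors, and then to feed this into the bad term $-(1+\ve)\sum_{i\notin I}\ti g^{i\ov i}|e_i(\vp_{V_1V_1})|^2/\lambda_1^2$ by a sequence of applications of Young's inequality with parameters $\ve$ and $\gamma$. The two coefficients $\ti g_{\ti 1\ov{\ti 1}}$ and $\lambda_1-\sum_{\alpha>1}\lambda_\alpha\mu_\alpha^2$ in the statement will emerge from the Cauchy--Schwarz estimates $\sum_q|\nu_q|^2\ti g_{q\ov q}=\ti g_{\ti 1\ov{\ti 1}}$ (valid since $\ti g$ is diagonal at $x_0$ and $\ti e_1=\sum_q\nu_q e_q$) and $\sum_{\alpha>1}\mu_\alpha^2(\lambda_1-\lambda_\alpha)=\lambda_1-\sum_{\alpha>1}\lambda_\alpha\mu_\alpha^2$ (valid since $\sum_{\alpha>1}\mu_\alpha^2=1$ by \eqref{defnmu}). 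Throughout I will treat any genuine second derivative of $\vp$ as a quantity of size $O(\lambda_1)$, by \eqref{stupid}, and note that any such quantity, once divided by $\lambda_1^2$ and multiplied by $\ti g^{i\ov i}$ and summed, is controlled by $\tfrac C\ve\sum_i\ti g^{i\ov i}$.

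The main step is the identity, at $x_0$ for $i\notin I$,
$$e_i(\vp_{V_1V_1})=\sqrt2\sum_{q\in I}\ov{\nu_q}\,V_1(\ti g_{i\ov q})\ -\ \mn\sum_{\alpha>1}\mu_\alpha\,e_i(\vp_{V_\alpha V_1})\ +\ \sqrt2\sum_{q\notin I}\ov{\nu_q}\,V_1(\ti g_{i\ov q})\ +\ O(\lambda_1).$$
To obtain it I would start from $\vp_{V_1V_1}=V_1V_1(\vp)-(\nabla_{V_1}V_1)(\vp)$, apply $e_i$, commute one $V_1$ past $e_i$, and use $V_1=\tfrac1{\sqrt2}\sum_q(\nu_q e_q+\ov{\nu_q}\,\ov e_q)$ (with the $\nu_q$ extended to smooth functions near $x_0$) together with \eqref{tildeg} to convert the $\ov e_q$-derivatives into $\ti g_{i\ov q}$'s; the $(2,0)$-type third order terms that naively appear, of the shape $V_1(\nabla^2\vp(e_i,\ti e_1))$, get rewritten as $e_i(\nabla^2\vp(\ti e_1,V_1))$ modulo $O(\lambda_1)$ (symmetry of the third covariant derivative up to curvature), and then, using $\vp_{V_1V_1}=\lambda_1$, the definition of $\ti e_1$, and the expansion $JV_1=\sum_{\alpha>1}\mu_\alpha V_\alpha$, one solves for $e_i(\vp_{V_1V_1})$ and produces the $e_i(\vp_{V_\alpha V_1})$-terms. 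The last sum, call it $R_i$, is harmless: by Lemma \ref{lemmados} each $|\nu_q|\le C_A/\lambda_1$ for $q\notin I$, hence $\sum_{q\notin I}|\nu_q|^2\ti g_{q\ov q}\le C_A/\lambda_1$ (using $\ti g_{q\ov q}\le C\lambda_1$), so Cauchy--Schwarz gives $\ti g^{i\ov i}|R_i|^2/\lambda_1^2\le\tfrac{C_A}{\lambda_1^3}\,\ti g^{i\ov i}\sum_{q\notin I}\ti g^{q\ov q}|V_1(\ti g_{i\ov q})|^2$, which for $\lambda_1\ge C_A/\ve$ is absorbed by a small fraction of the $p=i\notin I,\ q\notin I$ part of the term $\ti g^{p\ov p}\ti g^{q\ov q}|V_1(\ti g_{p\ov q})|^2/\lambda_1$ on the left of \eqref{l1lb}; the $q\in I,\ i\notin I$ cross terms of that same quantity give exactly $2\sum_{q\in I}\sum_{i\notin I}\ti g^{i\ov i}\ti g^{q\ov q}|V_1(\ti g_{i\ov q})|^2/\lambda_1$, and the remaining ($p,q\in I$) part is nonnegative and discarded.

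Granting the displayed identity, the conclusion is a chain of elementary inequalities. First write $-(1+\ve)\tfrac{\ti g^{i\ov i}|e_i(\vp_{V_1V_1})|^2}{\lambda_1^2}=-(1-2\ve)\tfrac{\ti g^{i\ov i}|e_i(\vp_{V_1V_1})|^2}{\lambda_1^2}-3\ve\tfrac{\ti g^{i\ov i}|e_i(\vp_{V_1V_1})|^2}{\lambda_1^2}$ and keep the second piece verbatim (it is the $-3\ve(\cdots)$ term in the statement). For the first piece, apply $|a+b|^2\le(1+\ve^2)|a|^2+(1+\ve^{-2})|b|^2$ to peel off $R_i$ and $|a+b|^2\le(1+\ve)|a|^2+(1+\ve^{-1})|b|^2$ to peel off the $O(\lambda_1)$ error (the latter absorbed into the $\tfrac C\ve\sum_i\ti g^{i\ov i}$ term); for $\ve\le1/2$ these bring the prefactor $(1-2\ve)$ down to at most $(1-\ve)$. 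What is left is $-(1-\ve)\tfrac{\ti g^{i\ov i}}{\lambda_1^2}\big|\sqrt2\sum_{q\in I}\ov{\nu_q}V_1(\ti g_{i\ov q})-\mn\sum_{\alpha>1}\mu_\alpha e_i(\vp_{V_\alpha V_1})\big|^2$, to which I apply $|a+b|^2\le(1+\gamma)|a|^2+(1+\gamma^{-1})|b|^2$ and then the two Cauchy--Schwarz estimates above: the first term of the split gives $-(1-\ve)(1+\gamma)\ti g_{\ti 1\ov{\ti 1}}\sum_{q\in I}\sum_{i\notin I}\tfrac{2\ti g^{i\ov i}\ti g^{q\ov q}}{\lambda_1^2}|V_1(\ti g_{i\ov q})|^2$ (the $\sqrt2$ supplying the factor $2$), the second gives $-(1-\ve)(1+\gamma^{-1})(\lambda_1-\sum_{\alpha>1}\lambda_\alpha\mu_\alpha^2)\sum_{i\notin I}\sum_{\alpha>1}\tfrac{\ti g^{i\ov i}}{\lambda_1^2}\tfrac{|e_i(\vp_{V_\alpha V_1})|^2}{\lambda_1-\lambda_\alpha}$ --- precisely the last two terms of the statement. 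Finally, on the left of \eqref{l1lb} one retains the $i\notin I$ part of $(2-\ve)\sum_{\alpha>1}\ti g^{i\ov i}|e_i(\vp_{V_\alpha V_1})|^2/(\lambda_1(\lambda_1-\lambda_\alpha))$ (the first term on the right of the asserted inequality) and discards the nonnegative $i\in I$ part, while the $\ti g^{p\ov p}\ti g^{q\ov q}|V_1(\ti g_{p\ov q})|^2/\lambda_1$ term has already been accounted for; adding up yields the claim.

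I expect the main obstacle to be the main step --- carrying out the commutations of covariant derivatives (with the frame-derivative, torsion, Christoffel and curvature terms all tracked) and checking that \emph{only} the combinations $\sqrt2\,\ov{\nu_q}V_1(\ti g_{i\ov q})$ and $\mn\mu_\alpha e_i(\vp_{V_\alpha V_1})$ survive, that the $(2,0)$-type third order terms are indeed convertible into these, and that the $q\notin I$ remainder $R_i$ is genuinely subordinate to the available good term; this last point is exactly where Lemma \ref{lemmados} and the standing assumption $\lambda_1\ge C_A/\ve$ are needed.
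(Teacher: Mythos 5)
Your proposal is correct and follows essentially the same route as the paper: the key identity for $e_i(\vp_{V_1V_1})$ is exactly the paper's claim \eqref{claimOl1} (the paper derives it by writing $V_1=\sqrt{2}\,\ov{\ti{e}}_1-\mn JV_1$ in one slot rather than by your ``solve for $e_i(\vp_{V_1V_1})$'' device, but the two computations are equivalent and give the same coefficients), and the subsequent splitting of the sum over $q$ via Lemma \ref{lemmados}, the Young inequalities with parameters $\ve$ and $\gamma$, the two Cauchy--Schwarz estimates producing $\ti{g}_{\ti{1}\ov{\ti{1}}}$ and $\lambda_1-\sum_{\alpha>1}\lambda_\alpha\mu_\alpha^2$, and the absorption of the $q\notin I$ remainder into the $p,q\notin I$ part of $\ti{g}^{p\ov{p}}\ti{g}^{q\ov{q}}|V_1(\ti{g}_{p\ov{q}})|^2/\lambda_1$ under the assumption $\lambda_1\geq C_A/\ve$ all match the paper's \eqref{aux1}, \eqref{bad2} and \eqref{good}. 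No gaps.
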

\begin{proof}
We first claim that
\begin{equation} \label{claimOl1}
e_i (\varphi_{V_1 V_1})=\sqrt{2}\sum_{q}\ov{\nu_q}V_1(\ti{g}_{i\ov{q}})-\mn\sum_{\alpha>1}\mu_\alpha e_i(\vp_{V_1V_\alpha}) + E,
\end{equation}
where $E$ denotes a term satisfying  $|E| \le C \lambda_1$ for a uniform $C$.
Defining $\vp_{V_1\ov{\ti{e}}_1}$ in the obvious way, we have at $x_0$,
\begin{equation}\label{p1}
e_i (\varphi_{V_1 V_1})=\sqrt{2}e_i(\vp_{V_1\ov{\ti{e}}_1}) - \mn e_i (\varphi_{V_1 JV_1}),
\end{equation}
since $\ov{\tilde{e}}_1 = \frac{1}{\sqrt{2}} (V_1+ \sqrt{-1} JV_1)$.  Next,
\begin{equation}\label{p2}\begin{split}
e_i(\vp_{V_1\ov{\ti{e}}_1})&=e_iV_1\ov{\ti{e}}_1 (\vp)-e_i(\nabla_{V_1}\ov{\ti{e}}_1)(\vp)
=\ov{\ti{e}}_1e_iV_1(\vp)+ E,
\end{split}
\end{equation}
since the error terms arising from switching the order of operators give terms involving only two derivatives of $\varphi$.

With $\nu_1, \ldots, \nu_n \in \mathbb{C}$ as in (\ref{fix}), we have at $x_0$,
\begin{equation}\label{p3}\begin{split}
\ov{\ti{e}}_1e_iV_1(\vp)= {} &\sum_{q}\ov{\nu_q} \, \ov{e}_qe_iV_1(\vp)
=  \sum_{q}\ov{\nu_q}  V_1e_i\ov{e}_q(\vp) + E  \\
= {} &\sum_{q}\ov{\nu_q} V_1(\ti{g}_{i\ov{q}})+ E.
\end{split}\end{equation}
Recalling (\ref{defnmu}),
\begin{equation}\label{p4}
\begin{split}
e_i(\vp_{V_1JV_1})= {} & e_i V_1 JV_1(\vp)-e_i(\nabla_{V_1}(JV_1))(\vp)
=JV_1 e_iV_1(\vp)+E \\
= {} &\sum_{\alpha>1}\mu_\alpha V_\alpha e_iV_1(\vp)+E
=  \sum_{\alpha>1}\mu_\alpha e_i(\vp_{V_1V_\alpha})+E.
\end{split}
\end{equation}
Combining \eqref{p1}, \eqref{p2}, \eqref{p3} and \eqref{p4} proves the claim (\ref{claimOl1}).

Hence, for a uniform $C$,
\begin{equation}\label{aux1}
\begin{split}
(1+\ve)&\sum_{i\not\in I}\frac{\tilde{g}^{i\ov{i}} | e_i (\varphi_{V_1 V_1})|^2}{\lambda_1^2}=3\ve\sum_{i\not\in I}\frac{\tilde{g}^{i\ov{i}} | e_i (\varphi_{V_1 V_1})|^2}{\lambda_1^2}\\
&+(1-2\ve)\sum_{i\not\in I}\frac{\tilde{g}^{i\ov{i}} | \sqrt{2}\sum_{q=1}^n\ov{\nu_q}V_1(\ti{g}_{i\ov{q}}) - \mn \sum_{\alpha>1}\mu_\alpha e_i(\vp_{V_1V_\alpha})+E|^2}{\lambda_1^2}\\
&\leq 3\ve\sum_{i\not\in I}\frac{\tilde{g}^{i\ov{i}} | e_i (\varphi_{V_1 V_1})|^2}{\lambda_1^2}\\
&+(1-\ve)\sum_{i\not\in I}\frac{\tilde{g}^{i\ov{i}} | \sqrt{2}\sum_{q\in I}\ov{\nu_q}V_1(\ti{g}_{i\ov{q}}) - \mn \sum_{\alpha>1}\mu_\alpha e_i(\vp_{V_1V_\alpha})|^2}{\lambda_1^2} \\
& + \frac{C_A}{\ve} \sum_{i \notin I} \sum_{q \notin I} \frac{\tilde{g}^{i\ov{i}}|V_1(\tilde{g}_{i\ov{q}})|^2}{\lambda_1^4}    +\frac{C}{\ve}\sum_{i}\tilde{g}^{i\ov{i}},
\end{split}
\end{equation}
using Lemma \ref{lemmados}.

For  $\gamma>0$, we bound
\begin{equation}\label{bad}
\begin{split}
&(1-\ve)\sum_{i\not\in I}\frac{\tilde{g}^{i\ov{i}} | \sqrt{2}\sum_{q\in I}\ov{\nu_q}V_1(\ti{g}_{i\ov{q}}) - \mn \sum_{\alpha>1}\mu_\alpha e_i(\vp_{V_1V_\alpha})|^2}{\lambda_1^2}\\
&\leq(1-\ve)(1+\gamma)\sum_{i\not\in I}\frac{2\tilde{g}^{i\ov{i}}}{\lambda_1^2} \left|\sum_{q\in I}\ov{\nu_q}V_1(\ti{g}_{i\ov{q}})\right|^2+(1-\ve)\left(1+\frac{1}{\gamma}\right)\sum_{i\not\in I}\frac{\tilde{g}^{i\ov{i}}}{\lambda_1^2} \left|\sum_{\alpha>1}\mu_\alpha e_i(\vp_{V_1V_\alpha})\right|^2.
\end{split}
\end{equation}
But we also have, by the Cauchy-Schwarz inequality,
$$\left|\sum_{q\in I}\ov{\nu_q}V_1(\ti{g}_{i\ov{q}})\right|^2\leq \left(\sum_{q =1}^n|\nu_q|^2\ti{g}_{q\ov{q}}\right) \left(\sum_{q\in I}\ti{g}^{q\ov{q}}|V_1(\ti{g}_{i\ov{q}})|^2\right)
=\ti{g}(\ti{e}_1,\ov{\ti{e}}_1)\left(\sum_{q\in I}\ti{g}^{q\ov{q}}|V_1(\ti{g}_{i\ov{q}})|^2\right),$$
and
\[\begin{split}\left|\sum_{\alpha>1}\mu_\alpha e_i(\vp_{V_\alpha V_1})\right|^2&\leq\left(\sum_{\alpha>1}(\lambda_1-\lambda_\alpha)\mu_\alpha^2\right)\left(\sum_{\alpha>1}\frac{|e_i(\vp_{V_\alpha V_1})|^2}{\lambda_1-\lambda_\alpha}\right)\\
&=\left(\lambda_1-\sum_{\alpha>1}\lambda_\alpha\mu_\alpha^2\right)\left(\sum_{\alpha>1}\frac{|e_i(\vp_{V_\alpha V_1})|^2}{\lambda_1-\lambda_\alpha}\right),
\end{split}\]
recalling that we have $\sum_{\alpha>1}\mu_\alpha^2=1$. So the conclusion is that
\begin{equation}\label{bad2}
\begin{split}
&(1-\ve)\sum_{i\not\in I}\frac{\tilde{g}^{i\ov{i}} | \sqrt{2}\sum_{q\in I}\ov{\nu_q}V_1(\ti{g}_{i\ov{q}}) - \mn \sum_{\alpha>1}\mu_\alpha e_i(\vp_{V_1V_\alpha})|^2}{\lambda_1^2}\\
&\leq(1-\ve)(1+\gamma)\ti{g}_{\ti{1}\ov{\ti{1}}}\sum_{q\in I}\sum_{i\not\in I}\frac{2\tilde{g}^{i\ov{i}}\ti{g}^{q\ov{q}}}{\lambda_1^2} |V_1(\ti{g}_{i\ov{q}})|^2\\
&+(1-\ve)\left(1+\frac{1}{\gamma}\right)\left(\lambda_1-\sum_{\alpha>1}\lambda_\alpha\mu_\alpha^2\right)
\sum_{i\not\in I}\sum_{\alpha>1}\frac{\tilde{g}^{i\ov{i}}}{\lambda_1^2}\frac{|e_i(\vp_{V_\alpha V_1})|^2}{\lambda_1-\lambda_\alpha}.
\end{split}
\end{equation}
Also, using \eqref{tildeg}, we see that at $x_0$ we have
$$\ti{g}_{q\ov{q}}\leq \ti{g}_{1\ov{1}}\leq C+e_1\ov{e}_1(\vp)\leq C'\lambda_1,$$
for any $q$, using that $\lambda_1$ may be assumed to be large. Therefore, if $\lambda_1\geq \frac{C_A}{\ve}$ for the constant $C_A$ of (\ref{aux1}), and if also $\lambda_1\geq C'$, then we have
$$\frac{C_A}{\ve \lambda_1^3} \le \tilde{g}^{q\ov{q}}.$$
Therefore we obtain the inequality
\begin{equation}\label{good}
\begin{split}
\frac{\tilde{g}^{p\ov{p}} \tilde{g}^{q\ov{q}} | V_1(\tilde{g}_{p\ov{q}})|^2}{\lambda_1}\geq {} &  2\sum_{q\in I}\sum_{i\not\in I}\tilde{g}^{i\ov{i}}\tilde{g}^{q\ov{q}}\frac{| V_1(\tilde{g}_{i\ov{q}})|^2}{\lambda_1}  \\ {} &+ \frac{C_A}{\ve} \sum_{q \notin I}\sum_{i \notin I}  \frac{\tilde{g}^{i\ov{i}}|V_1(\tilde{g}_{i\ov{q}})|^2}{\lambda_1^4}.
\end{split}
\end{equation}
Combining \eqref{aux1}, \eqref{bad2} and \eqref{good} completes the proof of the lemma.
\end{proof}

We use this to prove the final lemma:
\begin{lemma}\label{lemmacuatro}
At $x_0$ we have
\[
\begin{split}
(2-\ve) \sum_{\alpha >1} &\tilde{g}^{i\ov{i}} \frac{ |e_i (\varphi_{V_{\alpha} V_1})|^2}{\lambda_1(\lambda_1-\lambda_{\alpha})} + \frac{\tilde{g}^{p\ov{p}} \tilde{g}^{q\ov{q}} | V_1(\tilde{g}_{p\ov{q}})|^2}{\lambda_1} -(1+\ve)\sum_{i\not\in I}\frac{\tilde{g}^{i\ov{i}} | e_i (\varphi_{V_1 V_1})|^2}{\lambda_1^2}\\
&\geq
- 6\ve A^2e^{-2A\vp}  \tilde{g}^{i\ov{i}}|e_i(\vp)|^2  - 6\ve (h')^2 \sum_{i\not\in I}\tilde{g}^{i\ov{i}} |e_i | \partial \varphi|^2_g|^2 - \frac{C}{\ve}\sum_{i}\tilde{g}^{i\ov{i}},
\end{split}
\]
for a uniform constant $C>0$, assuming without loss of generality that at $x_0$ we have $\lambda_1\geq \frac{C}{\ve^3}$.
\end{lemma}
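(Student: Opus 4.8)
The plan is to substitute the lower bound of Lemma~\ref{lemmatres} into the left-hand side of Lemma~\ref{lemmacuatro} and then, after choosing the free parameter $\gamma>0$ (which we are allowed to let depend on $x_0$) appropriately, show that the only negative contributions that survive are exactly the three terms on the right-hand side of Lemma~\ref{lemmacuatro}. Concretely, I would first regroup the six terms on the right of Lemma~\ref{lemmatres}: the two terms carrying $|e_i(\vp_{V_\alpha V_1})|^2$ combine to
\[
\sum_{i\not\in I}\sum_{\alpha>1}\frac{\tilde{g}^{i\ov{i}}\,|e_i(\vp_{V_\alpha V_1})|^2}{\lambda_1^2(\lambda_1-\lambda_\alpha)}\Big[(2-\ve)\lambda_1-(1-\ve)\big(1+\tfrac{1}{\gamma}\big)\big(\lambda_1-\textstyle\sum_{\beta>1}\lambda_\beta\mu_\beta^2\big)\Big],
\]
the two terms carrying $|V_1(\tilde{g}_{i\ov{q}})|^2$ combine to
\[
\frac{2}{\lambda_1}\sum_{q\in I}\sum_{i\not\in I}\tilde{g}^{i\ov{i}}\tilde{g}^{q\ov{q}}\,|V_1(\tilde{g}_{i\ov{q}})|^2\Big[1-\frac{(1-\ve)(1+\gamma)\,\ti{g}_{\ti{1}\ov{\ti{1}}}}{\lambda_1}\Big],
\]
and the remaining two terms are $-3\ve\sum_{i\not\in I}\tilde{g}^{i\ov{i}}|e_i(\vp_{V_1V_1})|^2/\lambda_1^2$ and $-\tfrac{C}{\ve}\sum_i\tilde{g}^{i\ov{i}}$. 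The last of these is already admissible on the right of Lemma~\ref{lemmacuatro}. Since in the first two grouped expressions the summed prefactors are nonnegative and $\lambda_1-\lambda_\alpha>0$ for $\alpha>1$, it suffices to (i) handle the $-3\ve(\cdots)$ term, and (ii) choose $\gamma$ so that both bracketed quantities are $\ge 0$.

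For (i), I would argue as in~\eqref{3o2}: since the first derivatives of $\hat{Q}$ vanish at $x_0$ and, by~\eqref{formulae} and~\eqref{fdgv}, $e_i(\lambda_1)=e_i(\vp_{V_1V_1})$ there, we get $e_i(\vp_{V_1V_1})/\lambda_1=Ae^{-A\vp}e_i(\vp)-h'e_i(|\de\vp|^2_g)$, and hence by $|a-b|^2\le 2|a|^2+2|b|^2$,
\[
-3\ve\sum_{i\not\in I}\frac{\tilde{g}^{i\ov{i}}\,|e_i(\vp_{V_1V_1})|^2}{\lambda_1^2}\ge -6\ve A^2e^{-2A\vp}\,\tilde{g}^{i\ov{i}}|e_i(\vp)|^2-6\ve(h')^2\sum_{i\not\in I}\tilde{g}^{i\ov{i}}|e_i|\de\vp|^2_g|^2,
\]
which is precisely the first two terms on the right-hand side of Lemma~\ref{lemmacuatro}.

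For (ii), I would extract two consequences of the positivity of $\tilde{g}$, using that at $x_0$, in the $V$-basis, $\nabla^2\vp=\Phi+B$ is diagonal with entries $\lambda_1,\lambda_2+1,\dots,\lambda_{2n}+1$. This gives $\vp_{V_1V_1}=\lambda_1$ and, via~\eqref{defnmu}, $\vp_{JV_1,JV_1}=\sum_{\alpha>1}\mu_\alpha^2(\lambda_\alpha+1)=\sum_{\alpha>1}\lambda_\alpha\mu_\alpha^2+1$; combining this with~\eqref{Jhess}, the identity $\ti{g}_{\ti{1}\ov{\ti{1}}}=\tilde{g}(V_1,V_1)=g(V_1,V_1)+H(\vp)(V_1,V_1)$ and the gradient bound (Proposition~\ref{gradient estimate}) on the linear error $E(\vp)$, we obtain $\big|\ti{g}_{\ti{1}\ov{\ti{1}}}-\tfrac14(\lambda_1+\sum_{\alpha>1}\lambda_\alpha\mu_\alpha^2)\big|\le C$, and then $\ti{g}_{\ti{1}\ov{\ti{1}}}>0$ forces $\lambda_1+\sum_{\alpha>1}\lambda_\alpha\mu_\alpha^2\ge -C$, whence $\lambda_1-\sum_{\beta>1}\lambda_\beta\mu_\beta^2\le 2\lambda_1+C$ (the reverse bound $\ge 0$ being trivial). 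Writing $X:=\lambda_1-\sum_{\beta>1}\lambda_\beta\mu_\beta^2\in[0,2\lambda_1+C]$, the two bracket conditions become $1+\tfrac1\gamma\le R_A$ and $1+\gamma\le R_B$ with $R_A:=\frac{(2-\ve)\lambda_1}{(1-\ve)X}$, $R_B:=\frac{4\lambda_1}{(1-\ve)(2\lambda_1-X+4C)}$, and a compatible $\gamma\in[\tfrac1{R_A-1},\,R_B-1]$ exists as soon as $R_A>1$, $R_B>1$ and $(R_A-1)(R_B-1)\ge 1$. Clearing denominators and using $X\le 2\lambda_1+C$, all three reduce to a lower bound of the form $\lambda_1\ge C/\ve$; the hypothesis $\lambda_1\ge C/\ve^3$ then leaves ample room (and, since $\gamma$ may be taken of size $O(1/\ve)$, also absorbs the $(1+\gamma)\cdot O(1)$ lower-order errors). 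Fixing this $\gamma$ makes both grouped expressions $\ge 0$, and combining with (i) and the leftover $-\tfrac{C}{\ve}\sum_i\tilde{g}^{i\ov{i}}$ proves the lemma.

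The main obstacle is precisely the simultaneous choice of $\gamma$: the bound $\lambda_1-\sum_{\beta>1}\lambda_\beta\mu_\beta^2\le 2\lambda_1+C$ is sharp exactly when $\ti{g}_{\ti{1}\ov{\ti{1}}}$ is smallest, which forces $\gamma$ to be large (of size $\sim 2/\ve$) in order to make the first bracket nonnegative; while $\ti{g}_{\ti{1}\ov{\ti{1}}}$ is largest (of size $\sim\lambda_1/2$) exactly when $\lambda_1-\sum_{\beta>1}\lambda_\beta\mu_\beta^2$ is smallest, which forces $\gamma$ to be small (of size $O(1)$) in order to make the second bracket nonnegative. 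A priori it is not obvious that one $\gamma$ can serve both; what makes it work is that the two constraints are complementary through $X+(2\lambda_1-X)=2\lambda_1$, a complementarity captured exactly by the elementary inequality $(R_A-1)(R_B-1)\ge 1$, which holds for $\lambda_1$ large because of the one-sided bound $\lambda_1+\sum_{\alpha>1}\lambda_\alpha\mu_\alpha^2\ge -C$ coming from $\ti{g}_{\ti{1}\ov{\ti{1}}}>0$. It is at this point that the positivity of $\ti\omega$ in the direction $V_1$ is genuinely used, in the spirit of Pli\'s's original argument.
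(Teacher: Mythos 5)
Your overall strategy is the same as the paper's: feed Lemma \ref{lemmatres} into the left-hand side, dispose of the $-3\ve$ term via $d\hat{Q}|_{x_0}=0$ exactly as in \eqref{3o2}, and then use the positivity of $\ti{\omega}$ in the direction $\ti{e}_1$ to control $\ti{g}_{\ti{1}\ov{\ti{1}}}$ and $\lambda_1-\sum_{\alpha>1}\lambda_\alpha\mu_\alpha^2$ simultaneously. Where you genuinely diverge is the endgame: the paper splits into Case (i) (the upper bound \eqref{tild} on $\ti{g}_{\ti{1}\ov{\ti{1}}}$ is nearly saturated, and $\gamma$ is chosen so that the two $|V_1(\ti{g}_{i\ov{q}})|^2$ terms cancel exactly) and Case (ii) ($\ti{g}_{\ti{1}\ov{\ti{1}}}\le C/\ve$, and $\gamma=1/\ve^2$), whereas you exhibit a single admissible $\gamma$ by intersecting the two constraint intervals and checking the product condition $(R_A-1)(R_B-1)\ge 1$. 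I verified that with the correct constants this condition reduces (after clearing denominators and using $(1-\ve)X\le 2(1-\ve)\lambda_1+C$) to $2\ve\lambda_1^2\ge C\lambda_1$, which holds under the hypothesis $\lambda_1\ge C/\ve^3$; so your unified argument does close, and it makes the ``complementarity'' between the two constraints — which in the paper is somewhat hidden in the case split — completely explicit. This is a legitimate and arguably cleaner way to finish.

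One quantitative slip needs fixing. You claim $\bigl|\ti{g}_{\ti{1}\ov{\ti{1}}}-\tfrac14(\lambda_1+\sum_{\alpha>1}\lambda_\alpha\mu_\alpha^2)\bigr|\le C$, apparently by importing the normalization of \eqref{Jhess}; the direct computation of $\ti{e}_1\ov{\ti{e}}_1(\vp)$ with $\ti{e}_1=\tfrac{1}{\sqrt2}(V_1-\mn JV_1)$ (this is \eqref{tild} in the paper) gives the coefficient $\tfrac12$, not $\tfrac14$. Since you only ever use the \emph{upper} bound on $\ti{g}_{\ti{1}\ov{\ti{1}}}$ to justify constraint (B), and your $\tfrac14$-bound is the stronger (hence unjustified) one, your stated $R_B=\frac{4\lambda_1}{(1-\ve)(2\lambda_1-X+4C)}$ is roughly twice the correct value $\frac{2\lambda_1}{(1-\ve)(2\lambda_1-X+2C)}$: choosing $\gamma=R_B-1$ with your $R_B$ would not guarantee $(1-\ve)(1+\gamma)\ti{g}_{\ti{1}\ov{\ti{1}}}\le\lambda_1$. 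Fortunately the correction is harmless — with the true $R_B$ the compatibility condition $(R_A-1)(R_B-1)\ge1$ still holds by the computation above — but as written the constant is wrong at the one point in the whole paper where this particular constant actually matters.
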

\begin{proof}  It suffices to show that, at $x_0$,
\begin{equation} \label{suffices}
\begin{split}
(2-\ve) \sum_{\alpha >1} &\tilde{g}^{i\ov{i}} \frac{ |e_i (\varphi_{V_{\alpha} V_1})|^2}{\lambda_1(\lambda_1-\lambda_{\alpha})} + \frac{\tilde{g}^{p\ov{p}} \tilde{g}^{q\ov{q}} | V_1(\tilde{g}_{p\ov{q}})|^2}{\lambda_1} -(1+\ve)\sum_{i\not\in I}\frac{\tilde{g}^{i\ov{i}} | e_i (\varphi_{V_1 V_1})|^2}{\lambda_1^2} \\
\geq {} & -3\ve \sum_{i\not\in I}\frac{\tilde{g}^{i\ov{i}} | e_i (\varphi_{V_1 V_1})|^2}{\lambda_1^2}-\frac{C}{\ve}\sum_{i}\tilde{g}^{i\ov{i}},
\end{split}
\end{equation}
since, using $d \hat{Q}|_{x_0}=0$,
\[ \begin{split}
-3\ve\sum_{i\not\in I}\frac{\tilde{g}^{i\ov{i}} | e_i (\varphi_{V_1 V_1})|^2}{\lambda_1^2}&=- 3\ve\sum_{i\not\in I}\tilde{g}^{i\ov{i}} | Ae^{-A\vp} e_i(\varphi) - h' e_i (| \partial \varphi|^2_g)|^2 \\
 &\ge - 6\ve A^2e^{-2A\vp}  \tilde{g}^{i\ov{i}}|e_i(\vp)|^2  - 6\ve (h')^2 \sum_{i\not\in I}\tilde{g}^{i\ov{i}} |e_i | \partial \varphi|^2_g|^2.
\end{split}
\]

We now prove (\ref{suffices}).  At  $x_0$ we have
\begin{equation}\label{tild}
\begin{split}
0<\ti{g}_{\ti{1}\ov{\ti{1}}}= {} &\ti{g}(\ti{e}_1,\ov{\ti{e}}_1)=g(\ti{e}_1,\ov{\ti{e}}_1)+\ti{e}_1\ov{\ti{e}}_1(\vp)-[\ti{e}_1,\ov{\ti{e}}_1]^{(0,1)}(\vp)\\
= {} & 1+\frac{1}{2}(V_1V_1(\vp)+(JV_1) (JV_1)(\vp)+\mn [V_1,JV_1](\vp))-[\ti{e}_1,\ov{\ti{e}}_1]^{(0,1)}(\vp)\\
={} & 1+\frac{1}{2}(\vp_{V_1V_1}+\vp_{JV_1JV_1}+(\nabla_{V_1}V_1)(\vp)+(\nabla_{JV_1}JV_1)(\vp)\\
&\ \ \ \ +\mn [V_1,JV_1](\vp))-[\ti{e}_1,\ov{\ti{e}}_1]^{(0,1)}(\vp)\\
 \le {} & \frac{1}{2}\left(\lambda_1+\sum_{\alpha>1}\lambda_\alpha\mu_\alpha^2\right)+C,
\end{split}
\end{equation}
using that at $x_0$ we have, recalling (\ref{definePhi}) and (\ref{defnmu}),
$$\vp_{V_1 V_1}=g(\Phi(V_1),V_1)=\lambda_1,\quad \vp_{JV_1 JV_1}=g(\Phi(JV_1),JV_1)+ B(JV_1,JV_1)=\sum_{\alpha>1}\lambda_\alpha\mu_\alpha^2+1.$$
The proof splits into two cases.\\

\noindent
{\bf Case (i). }Assume that at $x_0$ we have
\begin{equation}\label{crux}
\frac{1}{2}\left(\lambda_1+\sum_{\alpha>1}\lambda_\alpha\mu_\alpha^2\right)\geq (1-\ve)\ti{g}_{\tilde{1}\ov{\tilde{1}}}>0.
\end{equation}
Then Lemma \ref{lemmatres} gives
\[\begin{split}
(2-\ve) \sum_{\alpha >1} &\tilde{g}^{i\ov{i}} \frac{ |e_i (\varphi_{V_{\alpha} V_1})|^2}{\lambda_1(\lambda_1-\lambda_{\alpha})} + \frac{\tilde{g}^{p\ov{p}} \tilde{g}^{q\ov{q}} | V_1(\tilde{g}_{p\ov{q}})|^2}{\lambda_1} -(1+\ve)\sum_{i\not\in I}\frac{\tilde{g}^{i\ov{i}} | e_i (\varphi_{V_1 V_1})|^2}{\lambda_1^2}\\
&\geq
\sum_{i\not\in I}\sum_{\alpha >1} \frac{\tilde{g}^{i\ov{i}}}{\lambda_1^2}\left(\frac{(2-\ve)\lambda_1}{\lambda_1-\lambda_{\alpha}} |e_i (\varphi_{V_{\alpha} V_1})|^2\right)
+2\sum_{q\in I}\sum_{i\not\in I}\tilde{g}^{i\ov{i}}\tilde{g}^{q\ov{q}}\frac{| V_1(\tilde{g}_{i\ov{q}})|^2}{\lambda_1}\\
&-3\ve \sum_{i\not\in I}\frac{\tilde{g}^{i\ov{i}} | e_i (\varphi_{V_1 V_1})|^2}{\lambda_1^2}
-(1+\gamma)\left(\lambda_1+\sum_{\alpha>1}\lambda_\alpha\mu_\alpha^2\right)\sum_{q\in I}\sum_{i\not\in I}\frac{\tilde{g}^{i\ov{i}}\ti{g}^{q\ov{q}}}{\lambda_1^2} |V_1(\ti{g}_{i\ov{q}})|^2\\
&-\frac{C}{\ve}\sum_{i}\tilde{g}^{i\ov{i}}-(1-\ve)\left(1+\frac{1}{\gamma}\right)\left(\lambda_1-\sum_{\alpha>1}\lambda_\alpha\mu_\alpha^2\right)
\sum_{i\not\in I}\sum_{\alpha>1}\frac{\tilde{g}^{i\ov{i}}}{\lambda_1^2}\frac{|e_i(\vp_{V_\alpha V_1})|^2}{\lambda_1-\lambda_\alpha},
\end{split}\]
and we can now choose
$$\gamma=\frac{\lambda_1-\sum_{\alpha>1}\lambda_\alpha\mu_\alpha^2}{\lambda_1+\sum_{\alpha>1}\lambda_\alpha\mu_\alpha^2}>0,$$
so that the second and fourth term on the right hand side cancel each other, while the first term  dominates the last one, and this establishes \eqref{suffices}.\\

\noindent
{\bf Case (ii). }Assume on the other hand that at $x_0$ we have
\begin{equation}\label{crux2}
\frac{1}{2}\left(\lambda_1+\sum_{\alpha>1}\lambda_\alpha\mu_\alpha^2\right)< (1-\ve)\ti{g}_{\tilde{1}\ov{\tilde{1}}}.
\end{equation}
Then \eqref{tild} gives
$$\ti{g}_{\tilde{1}\ov{\tilde{1}}}\leq \frac{1}{2}\left(\lambda_1+\sum_{\alpha>1}\lambda_\alpha\mu_\alpha^2\right)+C\leq (1-\ve)\ti{g}_{\tilde{1}\ov{\tilde{1}}}+C,$$
and so
\begin{equation}\label{st1}
\ti{g}_{\tilde{1}\ov{\tilde{1}}}\leq \frac{C}{\ve}.
\end{equation}
In general, \eqref{tild} implies that
$$\lambda_1+\sum_{\alpha>1}\lambda_\alpha\mu_\alpha^2\geq -C,$$
and so
$$0<\lambda_1-\sum_{\alpha>1}\lambda_\alpha\mu_\alpha^2\leq 2\lambda_1+C\leq (2+2\varepsilon^2)\lambda_1,$$
as long as $2\lambda_1 \ge C/\ve^2$.

We now choose
$$\gamma=\frac{1}{\ve^2},$$
where $\ve$ still has to be chosen.
 This gives
\[\begin{split}
(1-\ve)\left(1+\frac{1}{\gamma}\right)\left(\lambda_1-\sum_{\alpha>1}\lambda_\alpha\mu_\alpha^2\right)&\leq
2(1-\ve)\left(1+\frac{1}{\gamma}\right)(1+\ve^2)\lambda_1\\
&=2(1-\ve)\left(1+\ve^2\right)^2\lambda_1\leq (2-\ve)\lambda_1,
\end{split}\]
if $\ve \le \frac{1}{6}$,
and from Lemma \ref{lemmatres} we get
\[\begin{split}
(2-\ve) \sum_{\alpha >1} &\tilde{g}^{i\ov{i}} \frac{ |e_i (\varphi_{V_{\alpha} V_1})|^2}{\lambda_1(\lambda_1-\lambda_{\alpha})} + \frac{\tilde{g}^{p\ov{p}} \tilde{g}^{q\ov{q}} | V_1(\tilde{g}_{p\ov{q}})|^2}{\lambda_1} -(1+\ve)\sum_{i\not\in I}\frac{\tilde{g}^{i\ov{i}} | e_i (\varphi_{V_1 V_1})|^2}{\lambda_1^2}\\
&\geq
2\sum_{q\in I}\sum_{i\not\in I}\tilde{g}^{i\ov{i}}\tilde{g}^{q\ov{q}}\frac{| V_1(\tilde{g}_{i\ov{q}})|^2}{\lambda_1}-3\ve \sum_{i\not\in I}\frac{\tilde{g}^{i\ov{i}} | e_i (\varphi_{V_1 V_1})|^2}{\lambda_1^2}\\
&-(1-\ve)\left(1+\frac{1}{\ve^2}\right)\ti{g}_{\ti{1}\ov{\ti{1}}}\sum_{q\in I}\sum_{i\not\in I}\frac{2\tilde{g}^{i\ov{i}}\ti{g}^{q\ov{q}}}{\lambda_1^2} |V_1(\ti{g}_{i\ov{q}})|^2-\frac{C}{\ve}\sum_{i}\tilde{g}^{i\ov{i}}.
\end{split}\]
But thanks to \eqref{st1} we have
$$(1-\ve)\left(1+\frac{1}{\ve^2}\right)\ti{g}_{\ti{1}\ov{\ti{1}}}\leq (1-\ve)\left(1+\frac{1}{\ve^2}\right)\frac{C}{\ve}\leq \frac{C}{\ve^3},$$
and so
\[\begin{split}
(2-\ve) \sum_{\alpha >1} &\tilde{g}^{i\ov{i}} \frac{ |e_i (\varphi_{V_{\alpha} V_1})|^2}{\lambda_1(\lambda_1-\lambda_{\alpha})} + \frac{\tilde{g}^{p\ov{p}} \tilde{g}^{q\ov{q}} | V_1(\tilde{g}_{p\ov{q}})|^2}{\lambda_1} -(1+\ve)\sum_{i\not\in I}\frac{\tilde{g}^{i\ov{i}} | e_i (\varphi_{V_1 V_1})|^2}{\lambda_1^2}\\
\geq {} &
\left(\lambda_1-\frac{C}{\ve^3}\right)2\sum_{q\in I}\sum_{i\not\in I}\tilde{g}^{i\ov{i}}\tilde{g}^{q\ov{q}}\frac{| V_1(\tilde{g}_{i\ov{q}})|^2}{\lambda_1^2}-3\ve \sum_{i\not\in I}\frac{\tilde{g}^{i\ov{i}} | e_i (\varphi_{V_1 V_1})|^2}{\lambda_1^2}-\frac{C}{\ve}\sum_{i}\tilde{g}^{i\ov{i}} \\
\geq {} & -3\ve \sum_{i\not\in I}\frac{\tilde{g}^{i\ov{i}} | e_i (\varphi_{V_1 V_1})|^2}{\lambda_1^2}-\frac{C}{\ve}\sum_{i}\tilde{g}^{i\ov{i}},
\end{split}\]
provided $\lambda_1\geq \frac{C}{\ve^3}$ at $x_0$.   \end{proof}

We now complete the proof of Proposition \ref{secondord}.  Combining Lemmas \ref{lemmae}, \ref{lemmauno} and \ref{lemmacuatro}, and recalling that $h''=2(h')^2$, we have
\[
\begin{split}
0 \ge &  - 6\ve A^2e^{-2A\vp}  \tilde{g}^{i\ov{i}}|e_i(\vp)|^2  - 6\ve (h')^2 \sum_{i\not\in I}\tilde{g}^{i\ov{i}} |e_i | \partial \varphi|^2_g|^2 \\ {} &- 2 (h')^2 \sum_{i\in I}\tilde{g}^{i\ov{i}} |e_i | \partial \varphi|^2_g|^2
 + \frac{h'}{2} \sum_k \tilde{g}^{i\ov{i}} (| e_i e_k (\varphi)|^2 + |e_i \ov{e}_k(\varphi)|^2)  \\ {} & +h'' \tilde{g}^{i\ov{i}} |e_i | \partial \varphi|^2_g|^2 + (Ae^{-A\vp}-\frac{C_1}{\ve}) \sum_i \tilde{g}^{i\ov{i}}+A^2e^{-A\vp}\tilde{g}^{i\ov{i}} |e_i (\varphi)|^2 - Ane^{-A\vp}\\
&\geq \left(Ae^{-A\vp}-\frac{C_1}{\ve}\right)\sum_{i}\tilde{g}^{i\ov{i}}+\frac{h'}{2} \sum_k \tilde{g}^{i\ov{i}} (| e_i e_k (\varphi)|^2 + |e_i \ov{e}_k(\varphi)|^2)\\
&+(A^2e^{-A\vp}-6\ve A^2e^{-2A\vp})\tilde{g}^{i\ov{i}} |e_i (\varphi)|^2 - Ane^{-A\vp},
\end{split}
\]
as long as $\ve\leq \frac{1}{6},$ where $C_1$ is uniform constant.
We now make our choices of $A$ and $\ve$.  First choose
$$A=6C_1+1,$$
and, at the expense of increasing $C_1$, we may assume that $$A \ge \max (C_0 +1, 6n \sup_M |\partial \varphi|^2_g)$$ which we needed earlier in (\ref{A1}) and Lemma \ref{lemmauno}.  Next pick
$$\ve=\frac{e^{A\vp(x_0)}}{6} \le \frac{1}{6},$$
so that
$$Ae^{-A\vp(x_0)}-\frac{C_1}{\ve}= e^{-A\vp(x_0)}\geq 1,\quad A^2e^{-A\vp(x_0)}-6\ve A^2e^{-2A\vp(x_0)}=0.$$
Note that now that the values of $\ve$ and $A$ have been fixed, we may indeed assume without loss of generality that at $x_0$ we have
$\lambda_1\geq\frac{C}{\ve^3}$ and $\lambda_1\geq \frac{C_A}{\ve}$.
We conclude that at $x_0$ we have
\begin{equation}\label{dumb2}
\sum_{i}\tilde{g}^{i\ov{i}}+\frac{h'}{2} \sum_k \tilde{g}^{i\ov{i}} (| e_i e_k (\varphi)|^2 + |e_i \ov{e}_k(\varphi)|^2)\leq C.
\end{equation}
From this it follows that at $x_0$ we have $\sum_{i}\tilde{g}^{i\ov{i}}\leq C,$ and so using again \eqref{dumb} we see that $\ti{g}$ is uniformly equivalent to $g$, and from \eqref{dumb2} we have
$\sum_{i,k} (| e_i e_k (\varphi)|^2 + |e_i \ov{e}_k(\varphi)|^2)\leq C$, which (as in \eqref{nabb}, \eqref{nabb3}, \eqref{nabb2}) implies that
$\sum_{\alpha,\beta}|\nabla^2_{\alpha\beta}\vp|\leq C,$ and so $\lambda_1(x_0)\leq C$, and hence $Q(x_0)\leq C$, as desired.
\end{proof}

\section{Completion of the proofs of the Main Theorems} \label{sectioncomplete}

In this section, we complete the proofs of the main results stated in the introduction.

\begin{proof}[Proof of Theorem \ref{main2}]
Thanks to Propositions \ref{uniform estimate} and \ref{secondord}, we can obtain {\em a priori} $C^{2,\alpha}$ estimates from the main result of \cite{TWWY}, for some $0< \alpha <1$ (and in fact for all $0<\alpha<1$ by \cite{Chu2}). Higher order estimates follow after differentiating the equation and applying the usual
bootstrapping method.
\end{proof}

Our main result now follows easily:

\begin{proof}[Proof of Theorem \ref{main}]  Now we have Theorem \ref{main2}, we can follow closely the arguments of \cite{TW0}.   Indeed, we will see that the $(1,1)$ form $\mn\de\db\vp=\frac{1}{2}(d(Jd\vp))^{(1,1)}$ has the right properties for this part of the  proof to go through essentially unchanged from the integrable case.  We include the brief arguments  for the sake of completeness.
  Consider the family of equations
\begin{equation} \label{family}
(\omega + \ddbar \varphi_t)^n = e^{tF+b_t} \omega^n, \quad \omega + \ddbar \varphi_t >0,
\end{equation}
for $t \in [0,1]$, where $b_t \in \mathbb{R}$.
For a fixed $\alpha \in (0,1)$, we define $\mathcal{T}$ to be the set of $t \in [0,1]$ such that (\ref{family}) admits a solution $(\varphi_t, b_t) \in C^{3,\alpha}(M) \times \mathbb{R}.$  Clearly $0 \in \mathcal{T}$.  We will show that $\mathcal{T}$ is open and closed, since this will show that $1\in \mathcal{T}$, and then adding a constant to $\vp_1$ we will obtain a solution of \eqref{ma}.

 To prove that $\mathcal{T}$ is open, fix $\hat{t} \in \mathcal{T}$, and write $\hat{\omega} := \omega + \ddbar \varphi_{\hat{t}}$.
From Theorem \ref{gaud}, there exists a smooth $\hat{u}$ such that $e^{\hat{u}} \hat{\omega}$ is Gauduchon, and we may assume that  $\int_M e^{(n-1)\hat{u}} \hat{\omega}^n =1$.  Define a map $\Psi$ by
$$\Psi(\psi) = \log \frac{(\hat{\omega}+\ddbar \psi)^n}{\hat{\omega}^n} - \log \left( \int_M e^{(n-1)\hat{u}} (\hat{\omega} + \ddbar \psi)^n \right),$$
which takes $\psi \in C^{3,\alpha}(M)$ with $\int_M \psi e^{(n-1)\hat{u}} \hat{\omega}^n=0$ and $\hat{\omega} + \ddbar \psi>0$ to a $C^{1,\alpha}(M)$ function $h$ satisfying $\int_M e^h e^{(n-1) \hat{u}} \hat{\omega}^n =1$.  For $t$ close to $\hat{t}$ we wish to solve
\begin{equation} \label{Psit}
\Psi(\psi_t) = (t-\hat{t} \,) F - \log \left( \int_M e^{(t-\hat{t}\, )F} e^{(n-1) \hat{u}} \hat{\omega}^n \right).
\end{equation}
Observe that $\Psi(0)=0$, and that the linearization of $\Psi$ at $\psi=0$ is given by the canonical Laplacian (see (\ref{canonical})) of $\hat{\omega}$,
\begin{equation} \label{eta}
\eta \mapsto \Delta_{\hat{\omega}}^C \eta,
\end{equation}
since $\int_M \ddbar \eta \wedge e^{(n-1)\hat{u}} \hat{\omega}^{n-1} =0$ (which follows from integration by parts, as in (\ref{ibp}), and the fact that $e^{\hat{u}} \hat{\omega}$ is Gauduchon).   By Theorem \ref{solvepoi},  the operator (\ref{eta}) is an invertible map of the tangent  spaces, and so by the Inverse Function Theorem we obtain $\psi_t$ solving (\ref{Psit}) for $t$ close to $\hat{t}$.  Then $\varphi_t = \varphi_{\hat{t}} + \psi_t$  solves (\ref{family}) for $t$ close to $\hat{t}$, for  $b_t \in \mathbb{R}$, showing that $\mathcal{T}$ is open.

To prove that $\mathcal{T}$ is closed we will apply Theorem \ref{main2}.  Suppose $\varphi_t \in C^{3,\alpha}(M)$ solves (\ref{family}).  Then,  as above, we can differentiate the equation and bootstrap to obtain that $\varphi_t$ is smooth.
Applying the maximum principle (\ref{localmax}) to $\varphi_t$ in (\ref{family}) we obtain the bound $|b_t| \le \sup_M |F|$.  We may add a $t$-dependent constant to $\varphi_t$ solving (\ref{family})  so that $\sup_M \varphi_t=0$.
Now we  apply Theorem \ref{main2} to $\varphi_t$  to obtain $C^{3, \alpha}$ (in fact, $C^{\infty}$) estimates  which are independent of $t$.   This shows that $\mathcal{T}$ is closed.  Hence we have proved the existence of a $C^{3, \alpha}$, and hence smooth, solution of (\ref{ma}).

It remains to prove uniqueness.  Assume that we have two solutions $(\varphi, b)$ and $(\varphi', b')$ of (\ref{ma}).  Writing $\theta = \varphi - \varphi'$, we have
$$\frac{(\omega + \ddbar \varphi' + \ddbar \theta)^n}{(\omega + \ddbar \varphi')^n} = e^{b-b'}.$$
Applying (\ref{localmax}) at the extrema  of $\theta$ we obtain $b=b'$.  Then
$$(\omega + \ddbar \varphi)^n = (\omega+ \ddbar \varphi')^n$$
and so
$$\ddbar \theta \wedge \sum_{i=0}^{n-1} (\omega+ \ddbar \varphi)^i \wedge (\omega + \ddbar \varphi')^{n-1-i} =0.$$
The strong maximum principle (together with the fact that $\sup_M\vp=\sup_M\vp'=0$) implies that $\theta =0$, namely $\varphi=\varphi'$.  This completes the proof.
\end{proof}

\end{document}